\documentclass[10pt]{article}
\usepackage[utf8]{inputenc}
\usepackage[a4paper,margin=1.2in]{geometry}

\title{Blow-up for the incompressible 3D-Euler equations with uniform $C^{1,\frac{1}{2}-\epsilon}\cap L^2$ force}
\author{Diego C\'ordoba\footnote{dcg@icmat.es}\quad and Luis Mart\'inez-Zoroa\footnote{luis.martinezzoroa@unibas.ch}}

\usepackage{graphicx}
\usepackage{amsmath}
\usepackage{ dsfont }
\usepackage{amsfonts}
\usepackage{amsthm}
\usepackage{ amssymb }
\usepackage{xcolor}

\newtheorem{theorem}{Theorem}[section]
\newtheorem{corollary}{Corollary}[theorem]
\newtheorem{lemma}[theorem]{Lemma}
\newtheorem{remark}{Remark}
\newtheorem{definition}{Definition}

\def\w{\omega}

\def\p{\partial}

\begin{document}
\maketitle

\begin{abstract}
     This paper presents a novel approach to establish a blow-up mechanism for the forced 3D incompressible Euler equations, with a specific focus on non-axisymmetric solutions. We construct solutions on $\mathbb{R}^3$ within the function space $C^{3,\frac12}\cap L^2$ for the time interval $[0, T)$, where $T > 0$ is finite, subject to a uniform force in $C^{1,\frac12 -\epsilon}\cap L^2$. Remarkably, our methodology results in a blow-up: as the time $t$ approaches the blow-up moment $T$, the integral $\int_0^t |\nabla u| ds$ tends to infinity, all while preserving the solution's smoothness throughout, except at the origin. In the process of our blow-up construction, self-similar coordinates are not utilized and we are able to treat solutions beyond the $C^{1,\frac13+}$ threshold regularity of axy-symmetric solutions without swirl.
    
\end{abstract}

\section{Introduction}

In this paper we consider  the initial value problem for the forced incompressible Euler equations on $\mathbb{R}^3$
\begin{eqnarray}\label{Euler}
\p_t u + (u\cdot\nabla) u + \nabla  p=  f,\\\nonumber
\nabla\cdot u = 0 \\ 
u_0(x)=u(x,0), \nonumber
\end{eqnarray}
 where $u(x,t)=(u_1(x,t), u_2(x,t), u_3(x,t))$ is the velocity field and $p=p(x,t)$ is the pressure function  of an ideal fluid. $f=(f_1(x,t), f_2(x,t), f_3(x,t))$ is an external force. 

 One of the significant unresolved questions in the realms of nonlinear partial differential equations and fluid dynamics is whether smooth initial data  with finite energy can lead to a finite-time singularity/blow-up in the 3D incompressible smooth forced Euler equations (\ref{Euler}).
 
The classical theory of well-posedness for the Euler equation (\ref{Euler}), dating back to the works of Lichtenstein \cite{Lich} and Gunther \cite{Gunther}, asserts the existence of a unique solution in the function space $C^{k,\alpha}\cap L^2$ (with $k\geq1$ and $\alpha\in (0,1)$) for a finite time interval  $[0,T)$. This solution is guaranteed when both the initial divergence free data $u_0(x)$ is in the space $C^{k,\alpha}\cap L^2$ and the external force $f(x,t)$ is uniformly in the space $C^{k,\alpha}\cap L^2$ for all time of existence. Then the natural question is: Do those solutions exist for all time? Or does it exist a finite time T in which there is a singularity?

In the specific context of two-dimensional scenarios, the local well-posedness mentioned above can be seamlessly extended to cover all time intervals (see \cite{Hol,wol}), which can be deduced thanks to the Beale-Kato-Majda criterion \cite{BKM}, as the flow inherently carries the vorticity $\omega = \nabla\times u$ (see also \cite{CFM} for another very interesting geometric criteria for blow-up). However, it's crucial to note that singular behavior remains a possibility. For instance, Kiselev and Šverák \cite{KS} achieved the optimal growth bound for the 2D incompressible Euler equations within a disk. Furthermore, it has been established that the equations exhibit a lack of local well-posedness in critical spaces, notably within $C^k$ spaces, for integers $k\geq 1$. This deficiency due to the non-locality of the pressure was proven by Bourgain and Li \cite{Bourgaincm}, as well as independently by Elgindi and Masmoudi \cite{Elgindi}. Their studies conclusively demonstrated the presence of strong ill-posedness and the absence of uniformly bounded solutions for the initial velocity $u_0$ within the context of $C^k$ (see also \cite{Bourgainsobolev, Elgindisobolev} for similar results in critical Sobolev spaces). Moreover, in a recent collaborative publication \cite{CMZ} in conjunction with Ozanski, we established the existence of a family of global unique classical solutions in the two-dimensional case that display an instantaneous loss of super-critical Sobolev regularity. 

For a more comprehensive historical overview of the finite-time singularity problem in the context of the Euler equation, we recommend consulting the following sources: \cite{BT,Con,Fef,Gib,Hou3,Kis,MB}. These references provide in-depth insights and reviews on the subject matter.

The symmetries inherent in the Euler equations offer a valuable opportunity for investigating axi-symmetric solutions. These solutions are governed by a concise system of two evolution equations that operate within two spatial variables. In particular, when there is no swirl (i.e. absence of angular velocity), it has been shown that for initial data in $C_c^{1,\frac13 +}$ there is global existence (see \cite{Yud} and also the survey \cite{DE} for an interesting review on the well-posedness of axi-symmetric flows), but it turns out that in the case of lower regularity these solutions can develop a singularity. The first proof of finite-time singularities was pioneered by Elgindi in his recent remarkable work \cite{Elgindi2}. In his study, he focused on flows characterized by axi-symmetric symmetry and a lack of swirl, featuring velocity profiles in the $C^{1,\alpha}$ class, where $\frac13>\alpha > 0$ is chosen to be sufficiently small. Elgindi demonstrated the existence of exact self-similar blow-up solutions with infinite energy and no external forces. However, it should be noted that finite-energy blow-up solutions can also be obtained by introducing a uniform non-zero $C^{1,\alpha}$ external force, as discussed in Remark 1.4 of \cite{Elgindi2}. Furthermore, Elgindi, Ghoul, and Masmoudi extended these findings in their work \cite{Elgindi3}, revealing that these blow-up solutions can be localized. Through a detailed analysis of the stability of these blow-up solutions, they established the occurrence of finite-time singularities in the unforced ($f=0$) Euler equation (\ref{Euler}),  for solutions with finite energy in the $C^{1,\alpha}$ space.

Recent advances have brought significant developments in the study of self-similar singularities within axi-symmetric flows when boundaries are involved. We recommend beginning with the work of Elgindi and Jeong \cite{EJ}, where they establish the existence of finite-time blow-up solutions within scale-invariant Holder spaces in domains featuring corners. Furthermore, Chen and Hou, in their work \cite{Hou}, rigorously prove the existence of nearly self-similar $C^{1,\alpha}$ blow-up solutions near smooth boundaries. In their subsequent remarkable research \cite{Hou2, Hou4}, they demonstrate the blow-up of smooth self-similar solutions through the utilization of computer-assisted proofs. Moreover, in \cite{WLGB} by Wang, Lai, Gómez-Serrano, and Buckmaster, they leverage physics-informed neural networks to construct approximate self-similar blow-up profiles.

In collaboration with Zheng, we have recently introduced in \cite{CMZh} a novel mechanism for blow-up in $\mathbb{R}^3$ distinct from the conventional self-similar profile. Specifically, we have devised solutions to the 3D unforced incompressible Euler equations within the domain $\mathbb{R}^3\times [-T,0]$, with velocity profiles belonging to the function space $C^{\infty}(\mathbb{R}^3 \setminus {0})\cap C^{1,\alpha}\cap L^2$, where $0<\alpha\ll\frac13$, for time intervals $t\in (-T,0)$. These solutions are smooth except at a single point and exhibit finite-time singularities precisely at time 0. Notably, these solutions possess axi-symmetric symmetry without swirl, yet they are not founded upon asymptotically self-similar profiles. Instead, they are characterized by an infinite series of vorticity regions, each positioned progressively closer to the origin than its predecessor. The arrangement of vorticity is such that it generates a hyperbolic saddle at the origin, which, as it moves and undergoes deformation, influences the inner vortices. Consequently, the dynamics of the Euler equation within this framework can be approximated by an infinite system of ordinary differential equations  that is explicitly solvable. When we trace the dynamics backward from the blow-up time, this approach empowers us with precise control over the system's behavior, facilitating a rigorous demonstration of the blow-up phenomenon.

The velocity field in \cite{Elgindi2, Elgindi3} exhibits smoothness  away from the axis of symmetry. However, in close proximity to the axis, it possesses only a $C^{1,\alpha}$ regularity. Notably, Chen in \cite{Che2} has recently extended the construction introduced by Elgindi in \cite{Elgindi2}, enabling the application of asymptotically self-similar ansatz to construct blow-up scenarios where the velocity shares the same regularity class as that presented in our prior work in \cite{CMZh}.

The primary objective of this paper is to establish a blow-up mechanism for the forced 3D incompressible Euler equations (\ref{Euler}). Specifically, we aim to provide a non-axisymmetric blow-up mechanism that enables the treatment of smoother solutions and push beyond the $C^{1,\frac13}$ regularity threshold of previous scenarios. In particular, we construct solutions that belong to the function space $C^{3,\frac12}\cap L^2$ over the time interval $[0, T)$ for a finite time $T > 0$, under the influence of a uniform force in $C^{1,\frac12 -\epsilon}\cap L^2$. This construction leads to the property that $$\lim_{t\rightarrow T}\int_0^t |\nabla u (x,s)|_{L^{\infty}} ds = \infty,$$ while the solution remains smooth everywhere except at the origin. We do not employ self-similar coordinates in the construction of our blow-up.

\subsection{The main blow-up result}

The main result of the paper is the following:

\begin{theorem}
For any $\epsilon>0$, there exist  solutions of the forced 3D incompressible Euler equations (\ref{Euler})  in $\mathbb{R}^3\times [0,T]$ with a finite $T>0$ and  with an external forcing which is uniformly in $C^{1,\frac{1}{2}-\epsilon}\cap L^2$, such that on the time interval $0 \le t < T$, the velocity $u$ is in the space $C^{3,\frac12}\cap L^2$ and satisfies $$\lim_{t\rightarrow T}\int_0^t |\nabla u (x,s)|_{L^{\infty}} ds = \infty.$$ 
\end{theorem}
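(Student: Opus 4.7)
The plan is to adapt the infinite-vortex-cascade strategy of \cite{CMZh} to higher regularity and to a non-axisymmetric geometry. I would build $u$ as an explicit ansatz that produces the desired blow-up and then take the forcing $f:=\p_t u+(u\cdot\nabla)u+\nabla p$ to be whatever is needed for $(u,f)$ to solve \eqref{Euler}. The proof then splits into three tasks: verify $u\in C^{3,1/2}\cap L^2$ with $\int_0^t|\nabla u|_{L^\infty}\,ds\to\infty$ as $t\to T$; show that the residual $f$ lies in $C^{1,1/2-\epsilon}\cap L^2$ uniformly on $[0,T]$; and confirm that the ansatz captures the true dynamics to leading order.

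The ansatz is $u=u_{\mathrm{bg}}+\sum_{n\ge 1}u^{(n)}$, where $u_{\mathrm{bg}}$ is a smooth divergence-free field that agrees with a linear hyperbolic saddle in a neighbourhood of the origin (in 3D this means one expanding and two contracting directions, so the flow is genuinely non-axisymmetric), and each $u^{(n)}$ is the Biot--Savart velocity of a localized vorticity bump $\w^{(n)}$ of amplitude $a_n$ supported in a box of size $\ell_n$ at distance $r_n$ from the origin, with $r_n,\ell_n\to 0$ in geometric progression. The exponents are tuned so that (i) consecutive vortices are well separated relative to their sizes and do not interact at leading order; (ii) the self-induced velocity of each $\w^{(n)}$ is negligible compared with the local saddle flow; and (iii) the norms $\|u\|_{C^{3,1/2}}$ and $\|u\|_{L^2}$ remain finite, while $|\nabla u|_{L^\infty}\gtrsim a_n/\ell_n$ can be made to diverge as successive vortices activate.

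Under this setup each $\w^{(n)}$ is, to leading order, transported and stretched by the linear saddle $u_{\mathrm{bg}}$ alone, which yields a completely explicit ODE for $(r_n(t),\ell_n(t),a_n(t))$: the box contracts exponentially in certain directions, $a_n$ grows by vortex stretching, and the $n$-th vortex concentrates at a time $T_n\uparrow T$. Accumulating the contributions of successive vortices produces $\int_0^t|\nabla u|_{L^\infty}\,ds\to\infty$ as $t\to T$, while the geometric decay in $n$ keeps the higher-regularity norms bounded. On the forcing side, the residual is supported inside the vortex regions and reduces to self-interactions of each $\w^{(n)}$ (a nonlinear correction of order $a_n^2$ at scale $\ell_n$) plus vortex--vortex interactions that are negligible by separation; roughly, the forcing asks for only $1+(\tfrac12-\epsilon)$ derivatives of H\"older regularity, two fewer than the $3+\tfrac12$ asked of $u$, and this two-derivative gap is precisely what allows the nonlinear residual to be summed in $n$ in the weaker norm for every $\epsilon>0$.

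The main obstacle will be the simultaneous H\"older bookkeeping in the last two points. The geometric ratios controlling $(r_n,\ell_n,a_n)$ must be sharp enough for $u$ to lie in $C^{3,1/2}$ up to $t=T$, a much more demanding constraint than the $C^{1,\alpha}$ used in previous blow-up constructions, and this compresses the admissible window of parameters; at the same time the $\epsilon$-loss between $C^{1,1/2}$ and $C^{1,1/2-\epsilon}$ must be exactly enough to absorb the commutators of $u_{\mathrm{bg}}$ with the cut-offs of $\w^{(n)}$ together with the self-advection terms. It is this balance that pinpoints $\tfrac12$ as the regularity threshold of the method, and closing it rigorously will occupy the bulk of the argument; once it is closed, the blow-up itself is read off the explicit linear-saddle ODE system.
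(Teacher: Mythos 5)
There is a genuine gap, and it sits exactly at the point your proposal treats as a tuning issue. In your scheme the stretching of every vortex $\w^{(n)}$ is produced by a \emph{fixed} background saddle $u_{\mathrm{bg}}$, while condition (ii) insists that the self-induced velocities are negligible compared with that saddle. But then $\|\nabla u\|_{L^\infty}$ along the evolution is controlled by $\|\nabla u_{\mathrm{bg}}\|_{L^\infty}+o(1)$ up to the blow-up time you are trying to create, and Gr\"onwall applied to $\p_t\w+u\cdot\nabla\w=\w\cdot\nabla u$ gives $\|\w(t)\|_{L^\infty}\le \|\w(0)\|_{L^\infty}e^{Ct}$: with bounded stretching rate and finite $T$, no amplitude $a_n(t)$, and hence no $|\nabla u|_{L^\infty}$, can diverge as $t\to T$ unless the initial data already fail to be in $C^{3,\frac12}$. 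The paper's mechanism is different in precisely this respect: there is no fixed saddle; each layer's \emph{own} velocity supplies a saddle of strength $\sim C_0M_i^{\epsilon}\to\infty$ which stretches the next, more localized layer over a time window of length $M_i^{-\epsilon/2}\to 0$ (Theorem \ref{glue}, hypotheses \eqref{MN21}--\eqref{MN22}), the stretching direction being rotated by $R$ at each generation; each window contributes $\tfrac12\ln M_i\to\infty$ to $\int_0^t|\nabla u|_{L^\infty}ds$. So the divergence of the stretching rate is not an output of a linear-saddle ODE with fixed coefficients, it must be built into the cascade itself, and your assumption (ii) forbids exactly that.

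The second gap is the claim that the self-interaction residual is summable in $C^{1,\frac12-\epsilon}$ ``because of the two-derivative gap.'' For a generic bump of vorticity amplitude $a_n$ and scale $\ell_n$, the terms $u(\w^{(n)})\cdot\nabla\w^{(n)}$ and $\w^{(n)}\cdot\nabla u(\w^{(n)})$ have size $\sim a_n^2$ in $L^\infty$ and $\sim a_n^2\ell_n^{-(\frac12-\epsilon)}$ in the vorticity-level $C^{\frac12-\epsilon}$ norm, which is incompatible with $a_n\to\infty$ (needed for blow-up) and a uniformly bounded force; no derivative counting rescues this. The paper does not control these terms by scaling but by a designed cancellation: the layers are anisotropic oscillatory profiles of the form $M^{\epsilon}f(M^{\frac12-\epsilon}x_1)\sin(Mx_2)f(M^{\frac12+\epsilon}x_2)\sin(Mx_3)f(M^{\frac12}x_3)$, the Biot--Savart velocity is computed to leading order explicitly (Lemma \ref{vaprox}), and the dangerous $M^{2\epsilon-2b+i}$ contributions in $u(\w)\cdot\nabla\w$ cancel, leaving only $O(M^{3\epsilon-\frac12+\alpha})$ (Lemma \ref{cuaderror}); the interaction with the larger layer is handled by parity/decay (Lemmas \ref{decay}, \ref{upequeñoagrande}) rather than spatial separation, since all layers are nested at the origin. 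Without an analogue of this structural cancellation, and without the layer-to-layer amplification above, the two quantitative tasks you list cannot both close, which is precisely the tension the paper highlights in its ``Ideas of the proof'' section.
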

\begin{remark}
    Even though this is not proved in this paper, a careful study of the solution shows that, for any $\delta>0$, the solution for $t\in[0,T-\delta] $ fulfills $u\in C^{\infty}$, and similarly the forcing $f\in C^{\infty}$. Furthermore the vorticity is compactly supported for all $t\in[0,T]$.
\end{remark}
\begin{remark}
The solution at time $T$ is in $C^{1-\delta}$, where $\delta$ depends on $\epsilon$.
\end{remark}
\begin{remark}
    A more careful construction would allow us to show the result with forcing $f$ uniformly bounded in $C^{1,\alpha}$ for all $\alpha\in[0,\frac12)$.
\end{remark}

\subsection{Ideas of the proof}



In order to study our scenario for blow-up, we start by considering $\w(x,t)$ a solution to the 3D-Euler equation in vorticity formulation fulfilling $\w(0,t)=0$, $D^{1}\w(0,t)=0$ and $u(\w(0,t))\approx (x_{1},-x_{2},0)$, where $D^i\w$ refers to the i-th order partial derivatives of $\w$. Then, by adding a small perturbation around the origin $\w_{\lambda}(x)=g(\lambda x)$, formally we get, for very large $\lambda$, the evolution equation

$$\p_{t}\w_{\lambda}(x,t)+(u(\w_{\lambda})+(x_{1},-x_{2},0))\cdot\nabla \w_{\lambda}=\w_{\lambda} \cdot \nabla(u(\w_{\lambda})+(x_{1},-x_{2},0)),$$
$$\w_{\lambda}(x,0)=g(\lambda x).$$
By choosing $g(\lambda x)=(g_{1}(\lambda x),0,g_{3}(\lambda x))$,  this simplifies to
$$\p_{t}\w_{1,\lambda}+(u(\w_{\lambda})+(x_{1},-x_{2},0))\cdot\nabla \w_{1,\lambda}=\w_{\lambda} \cdot \nabla u(\w_{1,\lambda})+\w_{1,\lambda},$$
$$\p_{t}\w_{3,\lambda}+(u(\w_{\lambda})+(x_{1},-x_{2},0))\cdot\nabla \w_{3,\lambda}=\w_{\lambda} \cdot \nabla u(\w_{3,\lambda}),$$
$$\w_{1,\lambda}(x,0)=g_{1}(\lambda x),\w_{3,\lambda}(x,0)=g_{3}(\lambda x).$$

Finally, if we then make the (rather optimistic) assumption that the quadratic terms with respect to $\w_{\lambda}(x,t)$ remain small for some long time, we get the very simple evolution equation

\begin{equation}\label{evolucionheuristica}
    \p_{t}\w_{1,\lambda}+(x_{1},-x_{2},0)\cdot\nabla \w_{1,\lambda}=\w_{1,\lambda},
\end{equation}
$$\p_{t}\w_{3,\lambda}+(x_{1},-x_{2},0)\cdot\nabla \w_{3,\lambda}=0$$
$$\w_{1,\lambda}(x,0)=g_{1}(\lambda x),\w_{3,\lambda}(x,0)=g_{3}(\lambda x).$$

This system of equations can be studied explicitly, and it is relatively straightforward to find initial conditions where $D^{i}u(\w_{1,\lambda})$ grows a lot. The approach we would then like to use to prove our blow-up would be as follows:
\begin{itemize}
    \item We choose $\w(x,t)$ so that its velocity creates a hyperbolic flow around the origin, and with $\w(0,t)=\p_{x_{j}}\w(0,t)=0$,  for $j=1,2,3$. We call $\w(x,t)$ the big scale layer.
    \item We add a small perturbation $\w_{\lambda}(x)$ around the origin, which we call the small scale layer. We consider $\w_{\lambda}(x,t)$ the solution to \eqref{evolucionheuristica}.
    \item We check that $\w(x,t)+\w_{\lambda}(x,t)$ is a solution to the forced incompressible 3D-Euler equation in vorticity formulation. If all the approximations we consider to obtain \eqref{evolucionheuristica} are reasonable, the force necessary to do this should be small and relatively regular.
    \item If we choose $\w_{\lambda}(x,t)$ appropriately, $\w(x,t)$ $\w(x,t)+\w_{\lambda}(x,t)$ can produce a (very strong) hyperbolic flow around the origin for $t\approx 1$, and we can have 
    $$\w(0,t)+\w_{\lambda}(0,t)=\p_{x_{j}}(\w(0,t)+\w_{\lambda}(0,t))=0.$$
    \item We now repeat this gluing process an infinite number of times, using now the small scale layer to make an even more localized layer grow very fast. Using this approach an infinite number of times, we get a blow-up.
\end{itemize}

There are, however, several important complications one needs to take into account when trying to apply these ideas. First, we need to check that \eqref{evolucionheuristica} actually gives us a solution to the forced incompressible 3D-Euler equation, since not all forces are valid (we need, for example, the force to fulfil $\nabla \cdot f =0$).

Even more importantly, we need the quadratic terms with respect to $\w_{\lambda}(x,t)$ i.e.
$$u(\w_{\lambda}(x,t))\cdot\nabla \w_{\lambda}(x,t),\w_{\lambda}(x,t)\cdot\nabla u(\w_{\lambda}(x,t)),$$
to be small. However, in principle, this clashes with our other assumptions that $\w_{\lambda}$ is very localized and that $D^{1}u(\w_{\lambda}(x,t))$ becomes very big. In order to deal with this, we need to find $\w_{\lambda}(x,t)$ where there is some kind of cancellation in the quadratic term, so that it is much smaller than the rough a priori bounds suggest.


Furthermore, we will actually use a more complicated evolution equation than \eqref{evolucionheuristica} in order to obtain a better approximation of the 3D-Euler equation.

\subsection{Outline of the paper}
Section 2 of this paper deals with some notation that we will use through the paper, as well as some important properties of both the velocity operator and the forced incompressible 3D-Euler equations. Section 3 studies the properties of a simplified evolution equation that we will use to model the evolution of the individual layers that will compose our solution to the forced incompressible 3D-Euler equations in vorticity formulation. In section 4, we consider the solutions to the simplified evolution equation and use the properties we have obtained in section 3 to obtain bounds for the different terms that will compose the force in our solutions to the forced incompressible 3D-Euler equations in vorticity formulation, showing in particular that this force has  $C^{\frac12-\epsilon}$ regularity in vorticity formulation (and therefore $C^{1,\frac12-\epsilon}$ in velocity formulation). Finally, in section 5 we use all the previous results to construct the solution that blows up in finite time.

\section{Preliminaries and notation}
\subsection{Some basic notation}
All the norms we will be considering here will refer to spatial norms, that is to say, for example, if $f(x,t)$ is a function defined for $t\in[a,b]$, then
$$||f(x,t)||_{C^{k,\beta}}=\text{sup}_{t_{0}\in[a,b]}||f(x,t_{0})||_{C^{k,\beta}}.$$
We will also use the notation $C^{k+\beta}$ to refer to the $C^{k,\beta}$ spaces.
In general, the domain considered for the time will be clear by context.
We also use the notation
$|f(x,t)|_{C^{k}}$
to refer to the $C^{k}$ seminorm, i.e. if $f(x,t):\mathds{R}^3\times\mathds{R}\rightarrow\mathds{R}$
$$|f(x,t)|_{C^{k}}=\sum_{i=0}^{k}\sum_{j=0}^{k-i}||\frac{\p^{k}f(x,t)}{\p_{x_{1}}^{j}\p_{x_{2}}^{i}\p_{x_{3}}^{k-i-j}}||_{L^{\infty}}.$$
We will use $D^{i}$ to refer to a generic (spatial) derivative of order $i$, and $||D^{i}f||_{C^{j}}$ to refer to the supremum of the $C^{j}$ norm of all the possible $i-$order derivatives of $f$.

\subsection{Properties for the velocity}
We start by noting some properties of the operator $u$ that we will use through the paper. First, we have that, for $\w(x)=(\w_{1}(x),\w_{2}(x),\w_{3}(x))$ with $\nabla\cdot\w(x)=0$ and compactly supported, we can find $u$ fulfilling $\nabla\times u=\w$ by using the Biot-Savart law:

\begin{equation}\label{uwop}
    u(\w)(x)=C\int \frac{(x-y)\times \w(y)dy}{|x-y|^3}dy=C\int \frac{h\times \w(x+h)dh}{|h|^3}dh.
\end{equation}

Since the constant $C$ in the kernel is irrelevant, we will just use from now on $C=1$.

Note that, even though we should only consider $\w$ fulfilling $\nabla\cdot\w=0$, and therefore a vorticity of the form $(\w_{1}(x),0,0)$ would not give us a valid velocity, we can formally define $u(\w_{1}(x),0,0)$ (sometimes written as $u(\w_{1}(x))$ when it is clear by context which component of the vorticity we are considering) as

$$u(\w_{1})(x)=\int \frac{h\times (\w_{1}(x+h),0,0)dh}{|h|^3}dh,$$
and similarly for other components of the vorticity. This will be just a convenient notation. With this notation, the first property we should keep in mind is that, for $i=1,2,3$

$$u_{i}(w_{i})=0$$
which can be readily obtained from \eqref{uwop}. Furthermore, if we consider a derivative of $u_{i}$, we have that $\p_{x_{j}}u_{i}$, $j=1,2,3$, $i=1,2,3$, is a singular integral operator, and in particular we have that, for compactly supported $\w$,
$$||\p_{x_{j}}u_{i}(\w)||_{C^{k,\alpha}}\leq C_{k,\alpha}||\w||_{C^{k,\alpha}}$$
\begin{equation}\label{lnu}
    ||\p_{x_{j}}u_{i}(\w)||_{L^{\infty}}\leq C ||\w||_{L^{\infty}}\ln(10+||\w||_{C^1}).
\end{equation}

Another interesting property of the velocity operator is that it commutes with the rotation operator. If we define, for some  function $f(x)=(f_{1}(x_{1},x_{2},x_{3}),f_{2}(x_{1},x_{2},x_{3}),f_{3}(x_{1},x_{2},x_{3}))$
$$R(f(x))=(f_{3}(x_{2},x_{3},x_{1}),f_{1}(x_{2},x_{3},x_{1}),f_{2}(x_{2},x_{3},x_{1}))$$
we have that
$$R(u(\w))=u(R(\w))$$
which in particular implies that, if $\w(x,t)$ is a solution to the 3D-Euler equation in vorticity formulation, i.e.

$$\p_{t}\w+u(\w)\cdot\nabla\w=\w\cdot\nabla u(\w)$$
then $R(\w)$ is also a solution to the 3D-Euler equation in vorticity formulation. Note that R only affects the spatial variables, and the time variable remains unchanged.

We define similarly $R^{-1}$, the inverse of $R$, which has the same properties regarding the velocity operator and the 3D-Euler equation.
\subsection{Forced 3D-Euler equations}\label{forcedeuler}
Through this paper, we will be studying the forced (incompressible) 3D-Euler equations, i.e.
\begin{equation}\label{velocityequation}
    \p_{t}u+u\cdot\nabla u=-\nabla p+f(x,t)
\end{equation}
with $\nabla \cdot F(x,t)=0$. by taking the curl of this equation, we get
\begin{equation}\label{vorticityequation}
    \p_{t}\w+u\cdot\nabla \w=\w\cdot\nabla u+F(x,t)
\end{equation}
with $F(x,t)=\nabla\times f(x,t).$ We will study \eqref{vorticityequation} in order to obtain information about \eqref{velocityequation}, which can be recovered (if $\w(x,t)$ has enough decay and regularity) via \eqref{uwop}. In order to do so, we need to be a little careful about our choice of $\w$ and $F(x,t)$. First, in order to enssure that $u(x,t)$ is well defined, has finite energy, and that $\w\in C^{\alpha}$ implies $u\in C^{1,\alpha}$ ($\alpha\in(0,1)$), we will only consider $\w(x,t)$ compactly supported, divergence free and with zero average. Similarly, to ensure that $f(x,t)$ is well defined, in $L^2$ and that $F\in C^{\alpha}$ implies $f$ is in $C^{1,\alpha}$ ($\alpha\in(0,1)$), we will only consider $F$ compactly supported, with zero average and divergence free. 

Note that there is a relation between having these properties for $\w$ and for $F$, and in fact we will only show that $\w$ is compactly supported in $B_{1}(0)$ for the times considered, that $\w$ has zero average and that $\nabla \cdot F=\nabla\cdot \w=0$, since this already implies that $F$ has zero average and is supported in $B_{1}(0)$. With this in mind, we give our definition of solution to the forced incompressible 3D-Euler equation.

\begin{definition}\label{defsol}
    We say that $\w(x,t)$ is a solution to the forced incompressible 3D-Euler equation in vorticity formulation with force $F(x,t)$ (or a vorticity solution for short) if for $t\in[a,b]$  $\w(x,t)$ and $F(x,t)$ are supported in some fixed compact $K$, we have $\nabla \cdot F(x,t)=\nabla\cdot \w(x,t)=0$, $\int \w_{i}(x,t)=0$ for $i=1,2,3$, and
    $$\p_{t}\w+u\cdot\nabla \w=\w\cdot\nabla u+F(x,t).$$
    
\end{definition}

\begin{remark}
    The requirements for $\w(x,t)$ to define a vorticity solution can be (significantly) relaxed, but since all the solutions we will consider in this paper fulfil the properties in definition \ref{defsol}, we will use it for simplicity.
\end{remark}

\begin{remark}
    If $\w(x,t)$ is a vorticity solution with force $F(x,t)$, and $u(x,t)$ is the solution to the forced incompressible 3D-Euler equation in velocity formulation that we obtain from $\w(x,t)$, with forcing $f(x,t)$, then $F(x,t)\in C^{\alpha}$ implies $f(x,t)\in C^{1,\alpha}$ for $\alpha\in(0,1).$
\end{remark}

To ensure the condition that $\nabla \cdot F=0$, we will consider only solutions of the form

$$\w(x,t)=\sum_{i=1}^{K}\w^{i}(x,t)$$
with each $\w^{i}(x,t)$ fulfilling
\begin{equation}\label{evolutionlayers}
   \p_{t}\w^{i}(x,t)+u^{i}(x,t)\cdot\nabla \w^{i}(x,t)=\w^{i}(x,t)\cdot \nabla u^{i}(x,t) 
\end{equation}

for some $u^{i}(x,t)$.
If $\nabla\cdot\w^{i}=\nabla\cdot u^{i}=0$ we have that

\begin{align*}
    &\nabla \cdot \p_{t}\w^{i}=\nabla\cdot (u^{i}\cdot\nabla \w^{i}-\w^{i}\cdot \nabla u^{i})=\sum_{k=1}^{3}\p_{x_{k}}\sum_{j=1}^{3}(u_{j}^{i}\p_{x_{j}} \w_{k}^{i}-\w_{j}^{i}\p_{x_{j}}u_{k}^{i})\\
    &=\sum_{k=1}^{3}\sum_{j=1}^{3}(\p_{x_{k}}u_{j}^{i}\p_{x_{j}} \w_{k}^{i}-\p_{x_{k}}\w_{j}^{i}\p_{x_{j}}u^{i}_{k})+\sum_{k=1}^{3}\sum_{j=1}^{3}(u_{j}^{i}\p_{x_{k}}\p_{x_{j}} \w_{j}^{i}-\w^{i}\p_{k}\p_{x_{j}}u^{i}_{k})=0.
\end{align*}
This means that we can write

$$\p_{t}\w+u(\w)\cdot\nabla \w=\w\cdot\nabla u(\w)+F$$
with $F:=\p_{t}\w+u(\w)\cdot\nabla \w-\w\cdot\nabla u(\w)$. Since $ \nabla \cdot (u(\w)\cdot\nabla \w-\w\cdot\nabla u(\w))=\nabla \cdot \p_{t}\w=0$, we have that $ \nabla \cdot  F = 0$.

    \section{The simplified evolution equation}
As mentioned before, to construct our solutions we will divide our solution in different layers, each one with a different spatial scale, and each layer will fulfil an evolution equation like \eqref{evolutionlayers}. For this reason, this first section will be devoted to study this kind of equations, specifically when the velocity $u^{i}$ has some useful properties.
    \begin{definition}
        We say that a velocity field $u(x,t)=(u_{1}(x,t), u_{2}(x,t), u_{3}(x,t))$ is an odd velocity if $x_{i}u_{i}(x,t)$ is even with respect to $x_{1},x_{2}$ and $x_{3}$. Furthermore, we say that $\w(x,t)=(\w_{1}(x,t),\w_{2}(x,t),\w_{3}(x,t))$ is an odd vorticity if $x_{i}\w_{i}(x,t)$ is odd with respect to $x_{1},x_{2}$ and $x_{3}$.
    \end{definition}
    \begin{remark}
        With our definition for $u$ and $\w$ odd, we have that, if $u(x,t)$ is an odd velocity field, then $\nabla\times u(x,t)$ defines an odd vorticity. Similarly, if $\w(x,t)$ is an odd vorticity, \eqref{uwop} gives an odd velocity.
    \end{remark}
    \begin{remark}\label{odd}
        If $u(x,t)$ is an odd velocity then if
        $$\frac{\p^{n_{1}+n_{2}+n_{3}}}{\p x_{1}^{n_{1}}\p x_{2}^{n_{2}} \p x_{3}^{n_{3}}}u_{1}(x=0,t)$$
        with $n_{i}\in \mathds{N}$ is well defined, it is zero unless $n_{1}$ is odd and $n_{2}$ and $n_{3}$ are even. Similar properties hold for the other components of the velocity.

        Similarly, if $\w(x,t)$ is an odd vorticity and if
        $$\frac{\p^{n_{1}+n_{2}+n_{3}}}{\p x_{1}^{n_{1}}\p x_{2}^{n_{2}}\p x_{3}^{n_{3}}}\w_{1}(x=0,t)$$
        is well defined, it is zero unless $n_{1}$ is even and $n_{2}$ and $n_{3}$ are odd. Similar properties hold for the other components of the vorticity.
    \end{remark}
    \begin{definition}\label{ubarra}
        Given a velocity field $u(x,t)$ we define $\bar{u}(x,t)=(\bar{u}_{1}(x,t),\bar{u}_{2}(x,t),\bar{u}_{3}(x,t))$ as
        $$\bar{u}_{j}(x,t)=x_{j}\p_{x_{j}}u_{j}(0,x_{2},0), \ \text{for } j=1,3$$
        $$\bar{u}_{2}(x,t)=u_{2}(0,x_{2},0).$$
    \end{definition}
    \begin{remark}
        If  $\nabla \cdot u(x,t)=0$  then $\nabla \cdot \bar{u}(x,t)=0$.
    \end{remark}

\begin{lemma}\label{phix2}
        Let $\frac{1}{100}>\epsilon>0$, $T_{2}\geq T_{1}$, $P>0$ and $u_{lin}(t)$ and $u_{cub}(x,t)$ (both $u_{lin}$ and $u_{cub}$ depending on a parameter $N$) fulfilling  $u_{lin}(t)\in[\frac12 PN^{\epsilon},2PN^{\epsilon}]$, $||\p_{x}\p_{x}\p_{x}u_{cub}(x,t)||_{L^{\infty}}\leq  N^{4}$ and
        $$u_{cub}(0,t)=\partial_{x}u_{cub}(0,t)=\partial_{x}\partial_{x}u_{cub}(0,t)=0$$
        for $t\in[T_{1},T_{2}]$.
        If we define
        $$\partial_{t} \phi(x,t_{0},t)=-u_{lin}(t)\phi(x,t_{0},t)+u_{cub}(\phi(x,t_{0},t),t)$$
        $$\phi(x,t_{0},t_{0})=x$$
        then for $N$ big enough (depending on $T_{2}-T_{1},P$ and $\epsilon$), we have that, if $x\in[-N^{-3},N^{-3}]$ and $T_{1}\leq t_{0}\leq t\leq T_{2}$, then
        $$|\phi(x,t_{0},t)|\leq x$$
        and
        $$|x|e^{\int_{t_0}^{t}(u_{lin}(s)-x^2N^{4})ds}\leq |\phi(x,t_{0},t)|\leq |x|e^{\int_{t_0}^{t}(u_{lin}(s)+x^2N^{4})ds}$$
        \begin{equation}\label{pphi}
            e^{\int_{t_0}^{t}(u_{lin}(s)-x^2N^{4})ds}\leq |\partial_{x}\phi(x,t_{0},t)|\leq e^{\int_{t_0}^{t}(u_{lin}(s)+x^2N^{4})ds}
        \end{equation}
        $$|\p_{x}\p_{x}\phi(x,t_{0},t)|\leq |t-t_{0}|N^{4}|x|.$$

    \end{lemma}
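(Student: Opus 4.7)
The plan is a bootstrap plus Gronwall argument. The key observation is that the three vanishing conditions $u_{cub}(0,t)=\partial_x u_{cub}(0,t)=\partial_x^2 u_{cub}(0,t)=0$, together with $\|\partial_x^3 u_{cub}\|_\infty\leq N^4$, yield via Taylor's theorem the pointwise bounds
$$|u_{cub}(y,t)|\leq \tfrac{1}{6}N^4|y|^3, \quad |\partial_y u_{cub}(y,t)|\leq \tfrac{1}{2}N^4 y^2, \quad |\partial_y^2 u_{cub}(y,t)|\leq N^4|y|.$$
For $|x|\leq N^{-3}$ and $N$ large, the linear contribution $u_{lin}(t)\phi$ of size $\gtrsim PN^\epsilon|\phi|$ dominates the cubic correction of size $\lesssim N^4|\phi|^3\leq N^{-5}|\phi|$, and this smallness drives all the estimates.

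First I would establish the bound on $|\phi|$ itself by a bootstrap on a maximal subinterval of $[t_0,T_2]$ on which $|\phi(x,t_0,t)|\leq|x|$. Uniqueness for the ODE together with $\phi(0,t_0,t)=0$ forces $\phi$ to stay strictly nonzero whenever $x\neq 0$, so $\log|\phi|$ is differentiable and
$$\partial_t \log|\phi(x,t_0,t)| = -u_{lin}(t) + \frac{u_{cub}(\phi,t)}{\phi},$$
with the last term bounded in absolute value by $\tfrac{1}{6}N^4\phi^2\leq \tfrac{1}{6}N^4 x^2$ under the bootstrap. Integration from $t_0$ to $t$ yields the two-sided exponential sandwich for $|\phi|$ (with exponent $\int_{t_0}^t(-u_{lin}\pm x^2 N^4)\,ds$, after correcting what appears to be a sign in the statement). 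The bootstrap $|\phi|\leq|x|$ then closes because the upper-bound exponent is negative when $u_{lin}-x^2N^4\geq \tfrac{1}{2}PN^\epsilon - N^{-2}>0$, which holds for $N$ large in terms of $P$, $\epsilon$ and $T_2-T_1$.

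The derivative bounds follow by differentiating the ODE in $x$. For $\partial_x \phi$ this gives a scalar linear equation
$$\partial_t \partial_x \phi = \bigl(-u_{lin}(t)+\partial_y u_{cub}(\phi,t)\bigr)\partial_x \phi, \qquad \partial_x\phi|_{t=t_0}=1,$$
whose explicit integration together with $|\partial_y u_{cub}(\phi,t)|\leq \tfrac{1}{2}N^4\phi^2\leq \tfrac{1}{2}N^4 x^2$ yields the stated two-sided exponential bound on $|\partial_x\phi|$. Differentiating once more,
$$\partial_t \partial_x^2 \phi = \bigl(-u_{lin}(t)+\partial_y u_{cub}(\phi,t)\bigr)\partial_x^2\phi + \partial_y^2 u_{cub}(\phi,t)(\partial_x\phi)^2,$$
with zero initial data. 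By Duhamel's formula, the multiplicative exponential factor is at most $1$ (its exponent being the time integral of a negative quantity), and the source term is bounded by $N^4|\phi|(\partial_x\phi)^2\leq CN^4|x|$ from the estimates already derived. Integrating over $[t_0,t]$ gives $|\partial_x^2\phi|\leq|t-t_0|N^4|x|$ up to a harmless constant that is absorbed by enlarging $N$.

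The one delicate point is bookkeeping: the exponents in both the $|\phi|$ and $|\partial_x\phi|$ estimates contain $x^2 N^4$, so the bootstrap $|\phi|\leq|x|$ and the upper bound on $|\partial_x\phi|$ must be closed simultaneously on the full interval $[t_0,T_2]$. This is where the hypothesis $|x|\leq N^{-3}$ is essential, since it makes the correction $x^2N^4\leq N^{-2}$ negligible against $u_{lin}\geq \tfrac{1}{2}PN^\epsilon$ uniformly in $t$, and the bootstrap closes in a single pass once $N$ is taken sufficiently large in terms of $T_2-T_1$, $P$, and $\epsilon$.
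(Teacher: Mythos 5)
Your proposal is correct and follows essentially the same route as the paper's proof: Taylor bounds on $u_{cub}$ from the vanishing conditions, monotonicity/contraction giving $|\phi|\leq|x|$, integration of the logarithmic derivative for the two-sided bound on $|\phi|$, and differentiating the ODE once and twice (with the damping factor at most $1$) for the bounds on $\partial_x\phi$ and $\partial_x\partial_x\phi$. Your observation about the sign in the exponents is also consistent with how the lemma is actually invoked later (where the role of $u_{lin}$ is played by $\partial_{x_2}\bar{u}^{N}_{2}(0,t)$, which is negative).
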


    \begin{proof}
        First, we note that, for $x\in[-N^{-3},N^{-3}]$, $T_{1}\leq t_{0}\leq t\leq T_{2}$, $N$ big, we have that
        $$|xu_{lin}(t)|>|u_{cub}(x,t)|$$
        which implies
        $$\partial_{t}|\phi(x,t_{0},t)|\leq 0$$
        and thus
        $$|\phi(x,t_{0},t)|\leq x.$$

        Furthermore, we then have
        $|u_{cub}(\phi(x,t_{0},t),t)|<  N^{4}|\phi(x,t_{0},t)|^3\leq  N^{4}|\phi(x,t_{0},t)| x^2$
        so, using this bound and integrating in time we get
        $$|x|e^{\int_{t_0}^{t}(u_{lin}(s)-x^2N^{4})ds}\leq |\phi(x,t_{0},t)|\leq |x|e^{\int_{t_0}^{t}(u_{lin}(s)+x^2N^{4})ds}.$$
        To obtain \eqref{pphi} we note that
        $$\partial_{t} \partial_{x}\phi(x,t_{0},t)=-u_{lin}(t)\partial_{x}\phi(x,t_{0},t)+u_{cub}'(\phi(x,t_{0},t),t)(\partial_{x}\phi(x,t_{0},t))$$
        and using that, for $x\in[-N^{-3},N^{-3}]$, $t\in[0,T]$ for $N$ big,
        $$|u_{cub}'(\phi(x,t_{0},t),t)|< N^{4}\phi(x,t_{0},t)^2\leq N^{4}x^2$$
        we can obtain, again after integrating in time
        $$e^{\int_{t_0}^{t}(u_{lin}(s)-x^2N^{4})ds}\leq |\partial_{x}\phi(x,t_{0},t)|\leq e^{\int_{t_0}^{t}(u_{lin}(s)+x^2N^{4})ds}.$$
        Finally, differentiating the evolution equation again we obtain
        $$\partial_{t} \p_{x}\p_{x}\phi(x,t_{0},t)=-u_{lin}(t)\p_{x}\p_{x}\phi(x,t_{0},t)+u_{cub}'(\phi(x,t_{0},t),t)(\p_{x}\partial_{x}\phi(x,t_{0},t))+u_{cub}''(\phi(x,t_{0},t),t)(\p_{x}\phi(x,t_{0},t))^2$$
        and using that in particular $|\p_{x}\phi(x,t_{0},t)|\leq 1$ and
        $-u_{lin}(t)+u_{cub}'(\phi(x,t_{0},t),t)\leq 0$
        and integrating in time we get
        $$|\p_{x}\p_{x}\phi(x,t_{0},t)|\leq |t-t_{0}|N^{4}|x|.$$
        
    \end{proof}
    
    \begin{lemma}\label{evolucionsimp}
        Given $\frac{1}{100}>\epsilon>0$ and $P>0$, if we have $N,M>0$ big enough (depending on $\epsilon$ and $P$)  and an incompressible, odd velocity field $u^{N}(x,t)=(u^{N}_{1}(x,t),u^{N}_{2}(x,t),u^{N}_{3}(x,t))$  fulfilling $||u^{N}(x,t)||_{C^{3.5}}\leq N^4$, with
    \begin{equation}\label{MN1}
        e^{\int_{1-N^{-\frac{\epsilon}{2}}}^{1}\p_{x_{1}}u_{1}(0,s)ds}=M^{\frac12},
    \end{equation}
    \begin{equation}\label{MN2}
        \p_{x_{1}}u^{N}_{1}(0,t)\in[\frac{P}{2}N^{\epsilon},2PN^{\epsilon}]\text{ for }t\in [1-N^{-\frac{\epsilon}{2}},1]
    \end{equation}

        $$ |\partial_{x_{3}}u^{N}_{3}(x,t)|\leq \ln(N)^3\ \text{for }t\in[1-N^{-\frac{\epsilon}{2}},1]$$
    then, if for some $\w_{end}=(\w_{1,end},\w_{2,end}=0,\w_{3,end})$ with $\text{supp}(\w_{i,end})\subset\{|x_{2}|\leq M^{-\frac{1}{2}-\epsilon}\} $ we define the evolution equation
    \begin{equation}\label{ecuacionsimp}
        \p_{t}\w_{i}+\bar{u}^{N}\cdot\nabla \w_{i}=\w_{i}\p_{x_{i}}\bar{u}^{N}_{i}
    \end{equation}
    $$\w_{i}(x,t=1)=\w_{i,end}(x)$$
    and 
    $$\p_{t}\phi(x,t_{0},t)=\bar{u}^{N}(\phi(x,t_{0},t),t)$$
    $$\phi(x,t_{0},t_{0})=x$$
    we have that
    $$\w_{i}(x,t)=\w_{i,end}(\phi(x,t,1))e^{\int_{1}^{t}(\p_{x_{i}}\bar{u}^{N}_{i})(\phi(x,t,s),s)ds}.$$
    Furthermore, if for $1-N^{-\frac{\epsilon}{2}}\leq t\leq 1$ we define
    $$K_{N}(t)=e^{\int_{t}^{1}\p_{x_{1}}\bar{u}^{N}_{1}(0,s)ds}$$
    then we have that
    \begin{equation}\label{controlsoporte}
        \text{supp}(\w(x,t))\subset\{|x_{2}|\leq 2 M^{-\frac{1}{2}-\epsilon}K(t)\},
    \end{equation}
    and for $1-N^{-\frac{\epsilon}{2}}\leq t\leq 1$, $|x_{2}|\leq 4 M^{-\frac{1}{2}-\epsilon}K(t)$,
\begin{equation}\label{phi1}
    \phi_{1}(x,t,1)=x_{1}K_{N}(t)g_{er,1}(x_{2},t)
\end{equation}
\begin{equation}\label{phi3}
    \phi_{3}(x,t,1)=x_{3}g_{er,3}(x_{2},t)
\end{equation}
with
    \begin{equation}\label{controlphii}
        |g_{er,1}(x_{2},t)|,|g_{er,3}(x_{2},t)|\in[\frac{1}{2},2],|\p_{x_{2}}g_{er,1}(x_{2},t)|,|\p_{x_{2}}g_{er,3}(x_{2},t)|\leq 8N^{4}M^{-\frac{1}{2}-\epsilon}K_{N}(t)
    \end{equation}
    \begin{equation}\label{controlphii2}
        |\p_{x_{2}}\p_{x_{2}}g_{er,3}(x_{2},t)|,|\p_{x_{2}}\p_{x_{2}}g_{er,1}(x_{2},t)|\leq 4N^{4}
    \end{equation}
    and
    \begin{equation}\label{controlphi21}
        \frac{1}{2K_{N}(t)}\leq|\p_{x_{2}}\phi_{2}(x,t,1)|\leq \frac{2}{K_{N}(t)}
    \end{equation}
\begin{equation}\label{controlphi22}
    |\p_{x_{2}}\p_{x_{2}}\phi_{2}(x,t,1)|\leq N^4 M^{-\frac{1}{2}-\epsilon}K_{N}(t).
\end{equation}

Finally, also for $1-N^{-\frac{\epsilon}{2}}\leq t\leq 1$, $x_{2}\in \{|x_{2}|\leq 4 M^{-\frac{1}{2}-\epsilon}K(t)\}$, we have
\begin{equation}\label{amplitud1}
    e^{\int_{1}^{t}(\p_{x_{1}}\bar{u}^{N}_{1})(\phi(x,t,s),s)ds}=\frac{a_{1}(x_{2},t)}{K_{N}(t)}
\end{equation}
\begin{equation}\label{amplitud3}
    e^{\int_{1}^{t}(\p_{x_{3}}\bar{u}^{N}_{3})(\phi(x,t,s),s)ds}=a_{3}(x_{2},t)
\end{equation}
with
\begin{equation}\label{controlamplitud}
        \frac{1}{2}\leq|a_{i}(x_{2},t)|\leq 2, |\p_{x_{2}}a_{i}(x_{2},t)|\leq 1,|\p_{x_{2}}\p_{x_{2}}a_{i}(x_{2},t)|\leq 6N^4.
    \end{equation}
    \end{lemma}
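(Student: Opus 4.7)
The plan is to solve the transport equation \eqref{ecuacionsimp} by the method of characteristics, exploit the highly structured product form of $\bar{u}^N$ coming from Definition \ref{ubarra}, and then read off all the claimed bounds from the resulting explicit formulas. Since $\bar{u}^N$ is divergence-free, the standard characteristic computation gives
$$\omega_i(x, t) = \omega_{i, end}(\phi(x, t, 1)) \exp\left(\int_1^t \p_{x_i} \bar{u}_i^N(\phi(x, t, s), s)\, ds\right),$$
which is the first claim. The crucial observation is that $\bar{u}_2$ depends only on $x_2$, while $\bar{u}_1 = x_1 \p_{x_1} u_1^N(0, x_2, 0)$ and $\bar{u}_3 = x_3 \p_{x_3} u_3^N(0, x_2, 0)$ are linear in $x_1, x_3$ with coefficients depending only on $x_2$. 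The $\phi_2$ ODE therefore decouples, and once $\phi_2$ is known, the $\phi_1, \phi_3$ equations become one-dimensional linear ODEs with explicit solutions
$$\phi_1(x, t, 1) = x_1 \exp\left(\int_t^1 \p_{x_1} u_1^N(0, \phi_2(x, t, s), 0; s)\, ds\right),$$
and analogously for $\phi_3$.

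Next I would apply Lemma \ref{phix2} to the scalar ODE $\p_s \phi_2 = u_2^N(0, \phi_2, 0; s)$. The natural linear coefficient is $u_{lin}(s) = -\p_{x_2} u_2^N(0, 0, 0; s) = \p_{x_1} u_1^N(0, s) + \p_{x_3} u_3^N(0, s)$ by incompressibility, which lies in the required range thanks to \eqref{MN2} and $|\p_{x_3} u_3^N| \leq \ln(N)^3 \ll N^\epsilon$. The remainder $u_{cub}(y, s) = u_2^N(0, y, 0; s) + u_{lin}(s) y$ vanishes to order three at $y = 0$ because the oddness hypothesis on $u^N$ forces $y \mapsto u_2^N(0, y, 0; s)$ to be odd, so $u_{cub}(0) = \p_y u_{cub}(0) = \p_y^2 u_{cub}(0) = 0$; the third derivative bound comes from $\|u^N\|_{C^{3.5}} \leq N^4$. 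Lemma \ref{phix2} then immediately yields \eqref{controlphi21} and \eqref{controlphi22}, since $e^{\int_t^1 u_{lin}(s)\, ds}$ coincides with $K_N(t)$ up to the $x_2^2 N^4$-type errors in the exponent, negligible on $\{|x_2| \leq 4 M^{-1/2-\epsilon} K_N(t)\}$.

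To extract the decompositions \eqref{phi1}--\eqref{phi3} and \eqref{amplitud1}--\eqref{amplitud3}, I would split each exponent as
$$\int_t^1 \p_{x_1} u_1^N(0, \phi_2, 0; s)\, ds = \int_t^1 \p_{x_1} u_1^N(0, s)\, ds + \int_t^1 \bigl[\p_{x_1} u_1^N(0, \phi_2, 0; s) - \p_{x_1} u_1^N(0, s)\bigr] ds,$$
assigning the first integral to $K_N(t)$ (respectively $1/K_N(t)$ for $a_1$) and the exponential of the second to $g_{er, 1}$ (respectively $a_1$), and proceeding similarly for the index $3$. The second integrand is $O(|\phi_2| N^4)$ by the mean value theorem and the $C^{3.5}$ bound on $u^N$; combined with $|\phi_2(x, t, s)| \leq 4M^{-1/2-\epsilon} K_N(t)$ this makes the whole exponent small, and \eqref{controlphii}--\eqref{controlphii2} and \eqref{controlamplitud} follow by differentiating under the integral sign together with the $\p_{x_2}\phi_2$ and $\p_{x_2}\p_{x_2}\phi_2$ estimates already obtained. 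The support statement \eqref{controlsoporte} is then immediate from the solution formula for $\omega_i$ and the lower bound $|\phi_2(x, t, 1)| \gtrsim |x_2|/K_N(t)$ given by Lemma \ref{phix2}.

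The main technical obstacle I expect is the careful bookkeeping of the derivative estimates: every $x_2$-derivative of $g_{er, i}$ or $a_i$ generates terms involving $\p_{x_2}\phi_2$ or $\p_{x_2}\p_{x_2}\phi_2$, and one must verify that the $N^4$ factors from $\|u^N\|_{C^{3.5}}$ are tamed by the $M^{-1/2-\epsilon} K_N(t)$-smallness of $\phi_2$ on the support. This requires $M$ to be sufficiently large compared to $N$ (a polynomial relation determined by $\epsilon$), exploiting $N^\epsilon \gg \ln(N)^3$ and the normalization \eqref{MN1}, so that all error terms remain subdominant to the leading $K_N(t)$ scaling.
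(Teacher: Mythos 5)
Your route is the same as the paper's: solve \eqref{ecuacionsimp} along characteristics, exploit that $\bar{u}^{N}_{2}$ depends only on $x_{2}$ while $\bar{u}^{N}_{1},\bar{u}^{N}_{3}$ are linear in $x_{1},x_{3}$ with $x_{2}$-dependent coefficients, control $\phi_{2}$ through Lemma \ref{phix2} (with $u_{lin}(t)=-\p_{x_{2}}u^{N}_{2}(0,t)=\p_{x_{1}}u^{N}_{1}(0,t)+\p_{x_{3}}u^{N}_{3}(0,t)$ and the cubic remainder vanishing to second order by oddness), and split each stretching exponent into $\int_{t}^{1}\p_{x_{i}}\bar{u}^{N}_{i}(0,s)ds$ plus an error whose exponential defines $g_{er,i}$ and $a_{i}$. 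That skeleton is correct, and the zeroth-order bounds, \eqref{controlphi21}--\eqref{controlphi22} and \eqref{controlsoporte} all come out as you describe (the support statement needs the small continuity/bootstrap argument keeping trajectories inside $[-N^{-3},N^{-3}]$, which you gloss over but which is routine; also, the needed comparison $M^{\delta}>N$ is not an extra assumption, it follows from \eqref{MN1}--\eqref{MN2} since $M$ is exponentially large in $N$).

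The genuine gap is in the first-derivative bounds: the second half of \eqref{controlphii} and the bound $|\p_{x_{2}}a_{i}|\leq 1$ in \eqref{controlamplitud}. Differentiating your error exponent under the integral sign produces $\int_{t}^{1}(\p_{x_{2}}\p_{x_{i}}u^{N}_{i})(0,\phi_{2}(x,t,s),0,s)\,\p_{x_{2}}\phi_{2}(x,t,s)\,ds$, and with only the crude bounds $|\p_{x_{2}}\p_{x_{i}}u^{N}_{i}|\leq N^{4}$ and $|\p_{x_{2}}\phi_{2}|\leq 1$ this is of size $N^{4}N^{-\epsilon/2}$, which misses the claimed $8N^{4}M^{-\frac{1}{2}-\epsilon}K_{N}(t)$ by a factor of order $M^{\frac{1}{2}+\epsilon}/(N^{\epsilon/2}K_{N}(t))$ (huge near $t=1$, where $K_{N}\approx 1$), and likewise fails to give $|\p_{x_{2}}a_{i}|\leq 1$. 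The missing ingredient is a \emph{second} use of the oddness hypothesis (Remark \ref{odd}): $\p_{x_{1}}u^{N}_{1}(0,x_{2},0,t)$ and $\p_{x_{3}}u^{N}_{3}(0,x_{2},0,t)$ are even in $x_{2}$, so $\p_{x_{2}}\p_{x_{1}}u^{N}_{1}(0,t)=\p_{x_{2}}\p_{x_{3}}u^{N}_{3}(0,t)=0$, whence $|(\p_{x_{2}}\p_{x_{i}}u^{N}_{i})(0,\phi_{2},0,s)|\leq N^{4}|\phi_{2}|\leq N^{4}|x_{2}|\leq 4N^{4}M^{-\frac{1}{2}-\epsilon}K_{N}(t)$; only then does the time integral over an interval of length $\leq N^{-\epsilon/2}$, together with $|g_{er,i}|,|a_{i}|\leq 2$, yield the stated constants (this is exactly the paper's estimate $|\p_{x_{2}}p_{N,i}|\leq N^{4}|\phi_{2}||\p_{x_{2}}\phi_{2}|\leq N^{4}|x_{2}|$). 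These sharp forms are not cosmetic: Section 4 uses precisely $|\p_{x_{2}}g_{er,i,b}|\leq M^{-\frac{1}{2}+b}$ and $|\p_{x_{2}}a_{b}|\leq 1$. Your closing remark that the $N^{4}$ factors are tamed by the smallness of $\phi_{2}$ is the right intuition, but without this parity cancellation the factor $|\phi_{2}|$ never enters the differentiated exponent, so the step as written would fail.
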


\begin{remark}
    For the initial consitions we consider, the family of evolution equations \eqref{ecuacionsimp} is actually equivalent to 
    $$\p_{t}\omega+\bar{u}^{N}\cdot\nabla \w=\w\cdot\nabla\bar{u}^{N}.$$
\end{remark}


\begin{proof}
    First, we note that, if we define
    $$\tilde{\w}_{i}(x,t)=\w(\phi(x,1,t),t)$$ we get
    \begin{equation}
        \p_{t}\tilde{\w}_{i}(x,t)=\tilde{\w}_{i}(x,t)(\p_{x_{i}}\bar{u}^{N}_{i})(\phi(x,1,t),t)
    \end{equation}
    $$\tilde{\w}_{i}(x,t=1)=\w_{i,end}(x)$$
    so we can solve this problem to get
$$\tilde{\w}_{i}(x,t)=\w_{i,end}(x)e^{\int_{1}^{t}(\p_{x_{i}}\bar{u}^{N}_{i})(\phi(x,1,s),s)ds}.$$
Then, undoing the change of variables and using that
$$\phi(\phi(x,t_{1},t_{2}),t_{2},t_{3})=\phi(x,t_{1},t_{3})$$
we get
$$\w_{i}(x,t)=\w_{i,end}(\phi(x,t,1))e^{\int_{1}^{t}(\p_{x_{i}}\bar{u}^{N}_{i})(\phi(x,t,s),s)ds}.$$
Note that this means that
\begin{equation}\label{suppphi}
    \text{supp}(\w_{i}(x,t))\subset\{x:x=\phi(y,1,t), y\in\text{supp}(\w_{i,end})\}.
\end{equation}

Next, we have that, for $t\in[1-N^{-\frac{\epsilon}{2}},1]$,
\begin{align*}
    \p_{t}\phi_{2}(x,1,t)&=\bar{u}^{N}_{2}(\phi_{2}(x_{2},1,t),t)=\phi_{2}(x_{2},1,t)\p_{x_{2}}\bar{u}^{N}_{2}(0,t)+u^{N}_{2,cub}(\phi(x_{2},1,t))\\
    &=-\phi_{2}(x_{2},1,t)(\p_{x_{1}}\bar{u}^{N}_{1}(0,t)+\p_{x_{3}}\bar{u}^{N}_{3}(0,t))+u^{N}_{2,cub}(\phi(x_{2},1,t))
\end{align*}
with $u^{N}_{2,cub}(x_{2})=\bar{u}^{N}_{2}(x_{2})-x_{2}(\p_{x_{2}}\bar{u}^{N}_{2})(0,t)$, and using that $\bar{u}^{N}$ is odd (see remark \ref{odd}) and the bounds for $u^{N}_{3}$ we get
$$|\p_{t}\phi_{2}(x,1,t)|\leq [|(-\p_{x_{1}}\bar{u}^{N}_{1})(0,t)|+\ln(N)^3|]\phi_{2}(x,1,t)|+N^4|\phi_{2}(x,1,t)|^3$$
so, if $|\phi_{2}(x,1,t)|\leq N^{-3}$ for $t\in[t_{0},1]$, $t_{0}\geq 1-N^{-\frac{\epsilon}{2}}$ we get
\begin{equation}\label{controlsupport}
    |\phi_{2}(x,1,t_{0})|\leq |x_{2}|e^{\int_{1}^{t_{0}}((-\p_{x_{1}}\bar{u}^{N}_{1})(0,t)+\ln(N)^3+1)dt}\leq |x_{2}|K_{N}(t)e^{N^{-\frac{\epsilon}{2}}(\ln(N)^3+1)}\leq 2K_{N}(t_{0})|x_{2}|.
\end{equation}
In particular,
 we get that, for any $t_{0} \in[1-N^{-\frac{\epsilon}{2}},1]$
$$\phi(x_{2},1,t)\in [-N^{-3},N^{-3}]\text{ for } t\in[t_{0},1] \Rightarrow\ \phi_{2}(x,1,t_{0})\in [-2|x_{2}|K_{N}(t),2|x_{2}|K_{N}(t)]\text{ for } t\in[t_{0},1] $$ 
and since for $N$ big, $|x_{2}|\leq 4M^{-\epsilon-\frac{1}{2}}$ we have 
$$[-2|x_{2}|K_{N}(t),2|x_{2}|K_{N}(t)]\subset[-4M^{-\epsilon},4M^{-\epsilon}]\subset [-N^{-3},N^{-3}]$$
the continuity of $\phi(x_{2},1,t)$ with respect to $t$ gives that, in fact, for $t_{0}\in[1-N^{-\frac{\epsilon}{2}},1]$ and $|x_{2}|\leq 4M^{-\epsilon-\frac{1}{2}}$ 
\eqref{controlsupport} holds, which, combined with \eqref{suppphi}, gives us that
$$\text{supp}(\w(x,t))\subset\{|x_{2}|\leq 2 M^{-\frac{1}{2}-\epsilon}K_{N}(t)\}.$$

Now, for $1-N^{-\frac{\epsilon}{2}}\leq t_{0}\leq t\leq 1$, $|x_{2}|\leq 4 M^{-\frac{1}{2}-\epsilon}K_{N}(t)$ we can apply lemma \ref{phix2} to obtain
$$e^{\int_{t_0}^{t}(\p_{x_{2}}(\bar{u}^{N}_{2})(0,t)-x_{2}^2N^{4})ds}\leq |\partial_{x_{2}}\phi_{2}(x,t_{0},t)|\leq e^{\int_{t_0}^{t}(\p_{x_{2}}(\bar{u}^{N}_{2})(0,t)+x_{2}^2N^{4})ds}\leq 1$$
$$|\p_{x_{2}}\p_{x_{2}}\phi_{2}(x_{2},t_{0},t)|\leq |t-t_{0}|N^{4}|x_{2}|,$$
which in particular imply \eqref{controlphi21} and \eqref{controlphi22}.
Now, for the bounds of $\phi_{1}$ and $\phi_{3}$ we note that, for $i=1,3$
$$\p_{t}\phi_{i}(x,t,1)=\bar{u}^{N}_{i}(\phi(x,t,1))=\phi_{i}(x,t,1)(\p_{x_{i}}\bar{u}^{N}_{i})((0,\phi_{2}(x_{2},t,1),0),t)$$
$$\phi_{i}(x,1,1)=x_{i},$$
so, if we define
$$p_{N,i}(x_{2},t)=(\p_{x_{i}}\bar{u}^{N}_{i})((0,\phi_{2}(x_{2},t,1),0),t)-(\p_{x_{i}}\bar{u}^{N}_{i})(0,t)$$
we have that for $i=1,3$

$$\phi_{i}(x,t,1)=x_{i}e^{\int_{t}^{1}((\p_{x_{i}}\bar{u}^{N}_{i})(0,s)+p_{N,i}(\phi(x_{2},t,s),s))ds}$$
so
$$\phi_{1}(x,t,1)=x_{1}K_{N}(t)e^{\int_{t}^{1}p_{N,1}(\phi(x_{2},t,s),s)ds}$$
$$\phi_{3}(x,t,1)=x_{3}e^{\int_{t}^{1}((\p_{x_{3}}\bar{u}^{N}_{3})(0,s)+p_{N,3}(\phi(x_{2},t,s),s))ds}$$
which gives \eqref{phi1} and \eqref{phi3}. Next, using that, for $1-N^{-\frac{\epsilon}{2}}\leq t_{0}\leq t\leq 1$, $|x_{2}|\leq 4 M^{-\frac{1}{2}-\epsilon}K_{N}(t)$
$$|\p_{x_{2}}p_{N,i}(\phi(x_{2},t_{0},t),t)|\leq N^{4}|\phi(x_{2},t_{0},t)||\p_{x_{2}}\phi(x_{2},t_{0},t)|\leq N^{4}|x_{2}|$$
we get 
$$|\p_{x_{2}}e^{\int_{t}^{1}p_{N,i}(\phi(x_{2},t,s),s)ds}|\leq e^{\int_{t}^{1}p_{N,i}(\phi(x_{2},t,s),s)ds}N^{4}|x_{2}|\leq e^{\int_{t}^{1}p_{N,i}(\phi(x_{2},t,s),s)ds}N^{4}4M^{-\frac{1}{2}-\epsilon}K_{N}(t),$$
and since, 
$$|p_{N,i}(\phi(x_{2},t_{0},t),t)|\leq N^{4}|\phi(x_{2},t_{0},t)|^2\leq N^{4}|x_{2}|^2, |\p_{x_{3}}u^{N}_{3}(0,t)|\leq \ln(N)^3$$
we get \eqref{controlphii}.
Similarly, using
$$|\p_{x_{2}}\p_{x_{2}}p_{N,i}(\phi(x_{2},t_{0},t),t)|\leq 2N^{4}$$
gives \eqref{controlphii2}.

For \eqref{amplitud1}, \eqref{amplitud3} and \eqref{controlamplitud} we use that
$$a_{1}(x_{2},t)=e^{\int_{1}^{t}((\p_{x_{1}}\bar{u}^{N}_{1})(\phi_{2}(x_{2},t,s),s)-(\p_{x_{1}}\bar{u}^{N}_{1})(0,s))ds}$$
$$a_{3}(x_{2},t)=e^{\int_{1}^{t}(\p_{x_{3}}\bar{u}^{N}_{3})(\phi_{2}(x_{2},t,s),s)}ds$$
and using the bounds for $(\p_{x_{3}}\bar{u}^{N}_{3})(x_{2},t)$ and for $\bar{u}^{N}_{1}(x_{2},t)$ 
to get, for $|x_{2}|\leq 4 M^{-\frac{1}{2}-\epsilon}K_{N}(t)$, $t\in[1-N^{-\frac{\epsilon}{2}},1]$, that 
$$\frac12\leq e^{-N^4(1-t)x^2_{2}}\leq |a_{1}(x_{2},t)|\leq e^{N^4(1-t)x^2_{2}}\leq 2$$
$$\frac{1}{2}\leq e^{-(1-t)\ln(N)}\leq |a_{3}(x_{2},t)|\leq e^{(1-t)\ln(N)}\leq 2$$
and similarly, using the bounds for $\bar{u}^{N}_{i},\phi_{2}$ and their derivatives, we get

$$|\p_{x_{2}}a_{1}(x_{2},t)|,|\p_{x_{2}}a_{3}(x_{2},t)|\leq  4N^4|x_2|\leq 1$$
and for the second derivatives
$$|\p_{x_{2}}\p_{x_{2}}a_{1}(x_{2},t)|,|\p_{x_{2}}\p_{x_{2}}a_{3}(x_{2},t)|\leq  6N^4$$
which finishes the proof.

\end{proof}

\section{Bounds for the small scale layer}

In order to show that the gluing of the small scale layer and the big scale layer only requires a reasonable force (more specifically, almost $C^{1,\frac{1}{2}}$) to produce the desired behaviour, we need to obtain several useful bounds regarding the properties of the small scale layer. Since all the lemmas obtained in this section will require the same set of hypothesis, and to make the statements more compact, we will start this section specifying the assumptions that will hold for the rest of the section, as well as the notation that we will use.

We start by fixing $\frac{1}{100}>\epsilon>0$, $P>0$ and $f(z)$ a $C^{\infty}$ even function with $1\geq f(z)\geq 0$, $f(z)=1$ if $|z|\leq\frac{1}{2}$, $f(z)=0$ if $|z|\geq 1$. Note that this function $f(z)$ has nothing to do with the force appearing in \ref{Euler}.

The constants in the lemmas will depend on the specific choice of $\epsilon,P$ and $f(z)$, but the result will hold independently of the choice. Given $N>1$ and an incompressible, odd velocity field $u^{N}(x,t)=(u^{N}_{1}(x,t),u^{N}_{2}(x,t),u^{N}_{3}(x,t))$  fulfilling $||u^{N}(x,t)||_{C^{3.5}}\leq N^4$, with
    \begin{equation}\label{MN21}
        e^{\int_{1-N^{-\frac{\epsilon}{2}}}^{1}\p_{x_{1}}u^{N}_{1}(0,s)ds}=M^{\frac12},
    \end{equation}
    \begin{equation}\label{MN22}
        \p_{x_{1}}u^{N}_{1}(0,t)\in[\frac{P}{2}N^{\epsilon},2PN^{\epsilon}]\text{ for }t\in [1-N^{-\frac{\epsilon}{2}},1]
    \end{equation}
we define $\w(x,t)=(\w_{1}(x,t),\w_{2}(x,t),\w_{3}(x,t))$ by
\begin{equation}\label{evolucionsmall}
    \p_{t}\omega+\bar{u}^{N}\cdot\nabla \w=\w\cdot\nabla\bar{u}^{N},
\end{equation}
with $\bar{u}^{N}$ as in definition \ref{ubarra}, and
$$\w_{1}(x,t=1)=-M^{\epsilon}f(M^{\frac{1}{2}-\epsilon}x_{1})\sin(Mx_{2})f(M^{\frac{1}{2}+\epsilon}x_{2})\sin(Mx_{3})f(M^{\frac{1}{2}}x_{3})$$
$$\w_{2}(x,t=1)=0,$$
$$\w_{3}(x,t=1)=-\int_{-\infty}^{x_{3}}\p_{x_{1}}\w_{1}(x_{1},x_{2},s,t=1)ds.$$

We note that, for $t\in[1-N^{-\frac{\epsilon}{2}},1]$, $\w_{2}(x,t)=0$ and $\p_{x_{1}}\w_{1}(x,t)=-\p_{x_{3}}\w_{3}(x,t)$. Furthermore, if $N$ is big enough, we can apply lemma \ref{evolucionsimp}, remembering that in lemma \ref{evolucionsimp} we define $K_{N}(t)=e^{\int_{t}^{1}\p_{x_{1}}\bar{u}^{N}_{1}(0,s)ds}$ and defining $b(t)=\frac{\ln(K_{N}(t))}{\ln(M)}$, we can write $\w_{1}(x,t)=\w_{1,b}(x)$
where
\begin{align}\label{w1b}
   &\w_{1,b}(x):=-M^{\epsilon-b}a_{b}(x_{2})f(M^{\frac{1}{2}+b-\epsilon}x_{1}g_{er,1,b}(x_{2}))\sin(Mg_{2,b}(x_{2}))f(M^{\frac{1}{2}+\epsilon}g_{2,b}(x_{2}))\\ \nonumber
   &\times \sin(Mg_{er,3,b}(x_{2})x_{3})f(M^{\frac{1}{2}}g_{er,3,b}(x_{2})x_{3}) 
\end{align}
with $b\in[0,\frac{1}{2}]$, $\text{supp}(\w_{1,b})(x)\subset \{x:|x_{2}|\leq 2 M^{-\frac{1}{2}-\epsilon+b}\}$
and such that, for $|x_{2}|\leq 4 M^{-\frac{1}{2}-\epsilon+b}$
$$\frac{1}{2}\leq|a_{b}(x_{2})|\leq 2, |\p_{x_{2}}a_{b}(x_{2})|\leq 1,|\p_{x_{2}}\p_{x_{2}}a_{b}(x_{2})|\leq M^{\frac{1}{2}}$$
$$|g_{er,1,b}(x_{2})|,|g_{er,3,b}(x_{2})|\in[\frac{1}{2},2],|\p_{x_{2}}g_{er,1,b}(x_{2})|,|\p_{x_{2}}g_{er,3,b}(x_{2})|\leq M^{-\frac{1}{2}+b},$$
$$|\p_{x_{2}}\p_{x_{2}}g_{er,1,b}(x_{2},t)|,|\p_{x_{2}}\p_{x_{2}}g_{er,3,b}(x_{2},t)|\leq M^{\frac{1}{2}}$$
and
$$\frac{1}{M^{b}}\leq|\p_{x_{2}}g_{2,b}(x_{2})|\leq \frac{2}{M^{b}},|\p_{x_{2}}\p_{x_{2}}g_{2,b}(x_{2})|\leq  M^{-\frac{1}{2}+b},$$
for $N$ big, where we used that, for any fixed $\delta>0$, if $N$ is big enough then $M^{\delta}>N$ (using \eqref{MN21} and \eqref{MN22}).
Note that we can use the incompressibility of $\w$ to obtain a similar expression $\w_{3}(x,t)=\w_{3,b}(x)$, and we will also write $\w_{b}=(\w_{1,b},0,\w_{3,b})$.

We can now use these expressions to obtain information about the properties of $\w(x,t)$. Since the equation \eqref{evolucionsmall} should be obtained by applying an almost $C^{1,\frac{1}{2}}$ force to the 3D-Euler equations, we need to show that the "missing terms" (i.e., the terms that appear in the 3D-Euler equation but not in \eqref{evolucionsmall}) are almost $C^{1,\frac{1}{2}}$ (in the velocity formulation, and therefore almost $C^{\frac{1}{2}}$ in vorticity formulation). In particular, we want the quadratic terms $\w\cdot\nabla u(\w),u(\w)\cdot\nabla\w$ to be almost $C^{\frac{1}{2}}$.

For this, we start by obtaining bounds for $u(\w_{1})$. Note also that, since $M$ and $N$ are related by \eqref{MN21} and \eqref{MN22}, it is equivalent to say "for $N$ big enough" or "for $M$ big enough".

\begin{lemma}\label{vaprox}
There exists $C_{0}>0$ such that for $M>0$ big enough if we define
\begin{align*}
    &\tilde{u}_{2}(\w_{1,b})(x)=C_{0}\frac{Mg_{3,er,b}(x_{2})}{(Mg_{3,er,b}(x_{2}))^2+(M\p_{x_{2}}g_{2,b}(x_{2}))^2}M^{\epsilon-b}a_{b}(x_{2})f(M^{\frac{1}{2}+b-\epsilon}x_{1}g_{er,1,b}(x_{2}))\\
    &\times \sin(Mg_{2,b}(x_{2}))f(M^{\frac{1}{2}+\epsilon}g_{2,b}(x_{2})) \cos(Mg_{er,3,b}(x_{2})x_{3})f(M^{\frac{1}{2}}g_{er,3,b}(x_{2})x_{3})\\
\end{align*}
\begin{align*}
    &\tilde{u}_{3}(\w_{1,b})(x)=-C_{0}\frac{M\p_{x_{2}}g_{2,b}(x_{2})}{(Mg_{3,er,b}(x_{2}))^2+(M\p_{x_{2}}g_{2,b}(x_{2}))^2}M^{\epsilon-b}a_{b}(x_{2})f(M^{\frac{1}{2}+b-\epsilon}x_{1}g_{er,1,b}(x_{2}))\\
    &\times \cos(Mg_{2,b}(x_{2}))f(M^{\frac{1}{2}+\epsilon}g_{2,b}(x_{2})) \sin(Mg_{er,3,b}(x_{2})x_{3})f(M^{\frac{1}{2}}g_{er,3,b}(x_{2})x_{3})\\
\end{align*}
we have that, for any $\delta>0$,$|x_{2}|< 4M^{-\frac{1}{2}-\epsilon+b} $ and $j=0,1$

$$||\tilde{u}_{2}(\w_{1,b})-u_{2}((\w_{1,b},0,0))||_{C^{j}}\leq C_{\delta} M^{2\epsilon+j-\frac{3}{2}+\delta}$$
\begin{equation}\label{linftu3}
    ||\tilde{u}_{3}(\w_{1,b})-u_{3}((\w_{1,b},0,0))||_{C^{j}}\leq C_{\delta} M^{2\epsilon+j-\frac{3}{2}+\delta}
\end{equation}
and, for $i=1,3$, $j=0,1$, $|x_{2}|< 4M^{-\frac{1}{2}-\epsilon+b} $,
$$||\p_{x_{i}}\tilde{u}_{2}(\w_{1,b})-\p_{x_{i}}u_{2}((\w_{1,b},0,0))||_{C^{j}}\leq C_{\delta} M^{2\epsilon+j-\frac{1}{2}+\delta}$$
$$||\p_{x_{i}}\tilde{u}_{3}(\w_{1,b})-\p_{x_{i}}u_{3}((\w_{1,b},0,0))||_{C^j}\leq C_{\delta} M^{2\epsilon+j-\frac{1}{2}+\delta}.$$
\end{lemma}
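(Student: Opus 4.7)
My plan is to reduce the estimates to inverting a Laplacian sharply. Combining \eqref{uwop} with the identity $\frac{x_i-y_i}{|x-y|^3}=-\p_{x_i}\frac{1}{|x-y|}$, one sees that $u_j((\omega_{1,b},0,0))$ for $j=2,3$ satisfies the Poisson equations
\[
-\Delta u_2=-4\pi\,\p_{x_3}\omega_{1,b},\qquad -\Delta u_3=4\pi\,\p_{x_2}\omega_{1,b}.
\]
Setting $E_j=u_j((\omega_{1,b},0,0))-\tilde u_j$ and $R_2=-\Delta\tilde u_2+4\pi\,\p_{x_3}\omega_{1,b}$, $R_3=-\Delta\tilde u_3-4\pi\,\p_{x_2}\omega_{1,b}$, I then have $-\Delta E_j=R_j$, so everything reduces to estimating $R_j$ and inverting $-\Delta$ with sharp constants.

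To choose $C_0$, I would match leading oscillatory terms. Writing $\tilde u_2=C_0\,B(x_2)\,\mathcal{A}(x)\sin(Mg_{2,b}(x_2))\cos(Mg_{er,3,b}(x_2)x_3)$ with $B(x_2)=\frac{Mg_{er,3,b}}{M^2(g_{er,3,b}^2+(\p_{x_2}g_{2,b})^2)}$ and $\mathcal{A}$ gathering amplitudes and cutoffs, the terms in $-\Delta\tilde u_2$ where both derivatives fall on the trigonometric factor contribute $M^2((\p_{x_2}g_{2,b})^2+g_{er,3,b}^2)\tilde u_2$; by the very choice of $B$, this simplifies to $C_0 M g_{er,3,b}\mathcal{A}\sin(Mg_{2,b})\cos(Mg_{er,3,b}x_3)$, which agrees with $-4\pi\,\p_{x_3}\omega_{1,b}$ at oscillatory leading order for a universal constant $C_0$. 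The analogous calculation for $\tilde u_3$ pins down the same $C_0$.

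The main estimate is then on the remaining pieces of $R_j$, i.e., all product-rule terms where at least one of the two derivatives falls on a ``slow'' factor (the amplitudes $a_b, g_{er,1,b}, g_{er,3,b}$, the nonlinear part of $g_{2,b}$, the prefactor $B$, or one of the cutoffs $f(M^{1/2+b-\epsilon}\cdot)$, $f(M^{1/2+\epsilon}\cdot)$, $f(M^{1/2}\cdot)$). Using the regularity bounds from Lemma \ref{evolucionsimp} ($|\p_{x_2}g_{2,b}|\sim M^{-b}$, $|\p_{x_2}^2 g_{2,b}|\lesssim M^{-1/2+b}$, $|\p_{x_2}a_b|\le 1$, $|\p_{x_2}^2 a_b|\lesssim M^{1/2}$, and analogues for $g_{er,\cdot,b}$), together with the cost at most $M^{\alpha}$ per derivative of $f(M^{\alpha}\cdot)$, a term-by-term accounting shows each remainder term is smaller than the leading piece by a factor of at least $M^{-1/2+\delta}$, and that $R_j$ still oscillates at frequency $\sim M$ in $x_3$. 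To convert these into bounds on $E_j=(-\Delta)^{-1}R_j$, a crude estimate $\|E_j\|_\infty\lesssim\|R_j\|_\infty(\mathrm{diam\,supp})^2$ is not sharp enough, so I exploit the oscillation: writing each piece of $R_j$ as $A(y)e^{iM\Phi(y)}$ and integrating by parts twice in $y_3$ in the Newton convolution $\int R_j(y)/|x-y|\,dy$ (boundary terms vanish by the $y_3$-cutoff), each integration contributes a factor $M^{-1}$ against at worst a factor $|x-y|^{-1}$ from differentiating the kernel. Splitting into a near-diagonal region $|x-y|\lesssim M^{-1}$ (handled pointwise, as its volume is only $\sim M^{-3}$) and a far region (handled via the two integrations by parts with a $\log M$ loss) yields the announced $\|E_j\|_{C^0}\lesssim M^{2\epsilon-3/2+\delta}$. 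For the $\p_{x_1}$- and $\p_{x_3}$-derivative bounds (but not $\p_{x_2}$, where the derivative would fall on the slow amplitudes and lose the $M^{1/2}$ gain) I would differentiate under the integral and repeat, losing one power of $M$ per derivative and giving the $M^{2\epsilon+j-1/2+\delta}$ bound.

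The main obstacle will be the bookkeeping in bounding $R_j$: the product-rule expansion has many contributions, and certain cross-terms --- for instance those involving $\p_{x_2}g_{er,i,b}\sim M^{-1/2+b}$ instead of the generic $M^{-b}$ scaling --- gain less than a naive power count predicts. Absorbing these sub-polynomial losses, together with the logarithms generated by the Newton potential integration and the derivatives of the cutoffs, is precisely the role of the arbitrary $\delta>0$ in the target bound.
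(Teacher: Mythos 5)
Your route is genuinely different from the paper's. The paper never forms $-\Delta\tilde u_j$: it works directly on the Biot--Savart convolution, using integration by parts in $h_3$ (frequency $\sim M$) to confine the integral to $(h_1,h_2)$ within distance roughly $M^{-1+\delta}$ of the origin, then freezing the slowly varying factors and linearizing the phase there (each replacement costing $O(M^{2\epsilon-\frac32+2\delta})$), so that the remaining model integral converges to an explicit constant $C_0$ whose positivity is verified by a separate computation reducing to $\int_0^{\pi/2}\sin\alpha\cos\alpha\ln\frac{\sin\alpha+\cos\alpha}{|\sin\alpha-\cos\alpha|}\,d\alpha>0$; the $C^1$ claims are then obtained by running the same argument on $\p_{x_i}\w_{1,b}$ and bounding the commutator $\tilde u_j(\p_{x_i}\w_{1,b})-\p_{x_i}\tilde u_j(\w_{1,b})$. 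You instead view $u_2,u_3$ as Newtonian potentials of first derivatives of $\w_{1,b}$, match $-\Delta\tilde u_j$ to those right-hand sides at leading oscillatory order (which fixes $C_0$ as the Poisson constant, so positivity comes for free and no principal-value constant ever needs to be computed), and estimate $E_j=(-\Delta)^{-1}R_j$ by exploiting the residual frequency-$M$ oscillation of $R_j$ in $y_3$. This architecture is viable and, with the counting done carefully, does reproduce the stated exponents; what the paper's method buys in exchange is that it never has to expand $-\Delta\tilde u_j$ by the product rule at all.

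Before this is a proof, though, two concrete points need repair. First, your central bookkeeping claim -- that every remainder term is below the leading piece by $M^{-\frac12+\delta}$ -- fails when $b$ is close to $\frac12$: the $x_1$-cutoff $f(M^{\frac12+b-\epsilon}x_1g_{er,1,b}(x_2))$ costs $M^{\frac12+b-\epsilon}$ per $x_1$-derivative, so the contribution to $-\Delta\tilde u_j$ with both derivatives on it is only a factor $M^{-1+2b-2\epsilon}\le M^{-2\epsilon}$ below the leading $M^2$-term, giving $\|R_j\|_{L^\infty}$ as large as $M^{b-\epsilon}$. Your scheme survives only because these terms still oscillate at frequency $M$ in $x_3$, so the $M^{-2}$ gain from the double integration by parts yields $\lesssim M^{-2+b-\epsilon}\log M\le M^{2\epsilon-\frac32+\delta}$; the classification of ``slow'' factors and the bound on $R_j$ must be restated to reflect this (the culprit you flagged, $\p_{x_2}g_{er,i,b}$, is in fact harmless). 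Second, the $j=1$ cases of the first two estimates are full $C^1$ norms and therefore include the $\p_{x_2}$-derivative of the error, which you explicitly exclude; the exclusion is both a coverage gap and unnecessary, since a $\p_{x_2}$ on $R_j$ costs at most $\max(M^{1-b},M^{\frac12+\epsilon-b})\le M$, exactly the loss the $j=1$ bound tolerates, and the $y_3$-oscillation used in the inversion is unaffected. Two smaller fixes: the two displayed forms of \eqref{uwop} differ by a sign, so pin down one convention and check that your Poisson right-hand sides (and hence the sign of $C_0$ produced by the matching) agree with it, since $C_0>0$ is used downstream; and to keep the integration by parts in $y_3$ free of boundary terms you should split near/far in the $(y_1,y_2)$-variables only (as the paper does in $(h_1,h_2)$) rather than in $|x-y|$.
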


\begin{proof}
    We will only show the inequalities for $\tilde{u}_{3}$, since the ones for $\tilde{u}_{2}$ are completely analogous. We start by studying the function
    $$u_{3}((\w_{1,b},0,0))=-\int_{\mathds{R}^3} \frac{h_{2}}{|h|^3}\w_{1,b}(x+h)dh_{1}dh_{2}dh_{3}.$$
    In order to show \eqref{linftu3} for $j=0$, we will start by showing that we can make several useful approximations to $u_{3}(\w_{1,b})$ that create an error smaller than $C_{\delta} M^{2\epsilon+j-\frac{3}{2}+\delta}$.
    First, we note that we can use integration by parts and the bounds for $g_{er,3,b}(x_{2})$ to show that
\begin{align}\label{h3small}
    &|\int_{\mathds{R}}\frac{f(M^{\frac{1}{2}}g_{er,3,b}(x_{2}+h_{2})(x_{3}+h_{3}))\sin(Mg_{er,3,b}(x_{2}+h_{2})(x_{3}+h_{3})}{|h|^3}dh_{3}|\\\nonumber
    &=|\int_{\mathds{R}}\frac{\p^k}{\p h_{3}^k}\big(\frac{f(M^{\frac{1}{2}}g_{er,3,b}(x_{2}+h_{2})(x_{3}+h_{3}))}{|h|^3}\big)\frac{\sin(Mg_{er,3,b}(x_{2}+h_{2})(x_{3}+h_{3})+k\frac{\pi}{2})}{(Mg_{er,3,b}(x_{2}+h_{2}))^k}dh_{3}|\\
    &\leq \int_{\mathds{R}}C_{k}\sum_{i=0}^{k}\big(\frac{M^{\frac{i}{2}}}{|h|^{3+k-i}}\big)\frac{2^k}{M^k}dh_{3}\leq C_{k}\sum_{i=0}^{k}\big(\frac{M^{\frac{i}{2}}}{|h_{1}^{2}+h_{2}^{2}|^{\frac{2+k-i}{2}}}\big)\frac{2^k}{M^k}\nonumber
\end{align}
and using this plus that, for $M$ big
$$\text{supp}(\w_{1,b}(x))\subset\{x:|x|\leq 1\}$$
to show that, for any $\frac{1}{4}\geq \delta\geq 0$, $k\in\mathds{N}$, $M$ big
\begin{align*}
    &|\int_{|h_{1}^2+h_{2}^2|\geq M^{-1+\delta},h_{3}\in\mathds{R}}\frac{h_{2}}{|h|^3}\w_{1,b}(x+h)dh_{1}dh_{2}dh_{3}|\\
    &\leq\int_{|h_{1}^2+h_{2}^2|\geq M^{-1+\delta}} 1_{(|x_{1}^2+x_{2}^2|\leq 2)}M^{\epsilon-b}C_{k}h_{2}\sum_{i=0}^{k}\big(\frac{M^{\frac{i}{2}}}{|h_{1}^{2}+h_{2}^{2}|^{\frac{2+k-i}{2}}}\big)\frac{2^k}{M^k}dh_{1}dh_{2}\\
    &\leq C_{\delta,k} M^{-k\delta-1+\delta+\epsilon-b}\leq CM^{-\frac{3}{2}}.
\end{align*}

With that in mind, we can focus now on the integral only when $|h_{1}^2+h_{2}^2|\leq M^{-1+\delta}$. Next, using the properties of $g_{er,1,b}(x_{2})$ we check that
\begin{align*}
    &|\int_{|h_{1}^2+h_{2}^2|\leq M^{-1+\delta},h_{3}\in\mathds{R}}\frac{h_{2}}{|h|^3}(\w_{1,b}(x+h)-\w_{1}(x_{1},x_{2}+h_{2},x_{3}+h_{3})dh_{1}dh_{2}dh_{3}|\\
    &\leq C||f(z)||_{C^1}M^{\epsilon-b} M^{\frac{1}{2}+b-\epsilon}\int_{|h_{1}^2+h_{2}^2|\leq M^{-1+\delta},h_{3}\in\mathds{R}}\frac{h_{1}h_{2}}{|h|^3}dh_{1}dh_{2}dh_{3}\\
    &\leq CM^{\frac{1}{2}}\int_{|h_{1}^2+h_{2}^2|\leq M^{-1+\delta}}dh_{1}dh_{2}\leq CM^{-\frac{3}{2}+2\delta}.\\
\end{align*}

Similarly, using the properties of $a_{b}(x_{2}),g_{er,1,b}(x_{2}),g_{2,b}(x_{2}),g_{er,3,b}(x_{2})$ and $f(z)$ we get 

\begin{align*}
    &|\int_{|h_{1}^2+h_{2}^2|\leq M^{-1+\delta},h_{3}\in\mathds{R}}\frac{h_{2}}{|h|^3}\Big(\w_{1,b}(x_{1},x_{2}+h_{2},x_{3}+h_{3})\\
    &-M^{\epsilon-b}a_{b}(x_{2})f(M^{\frac{1}{2}+b-\epsilon}x_{1}g_{er,1,b}(x_{2}))\sin(M(g_{2,b}(x_{2})+h_{2}\partial_{x_{2}}g_{2,b}(x_{2}))f(M^{\frac{1}{2}+\epsilon}g_{2,b}(x_{2}))\\
   &\times \sin(Mg_{er,3,b}(x_{2})(x_{3}+h_{3}))f(M^{\frac{1}{2}}g_{er,3,b}(x_{2})(x_{3}+h_{3})\Big)dh_{1}dh_{2}dh_{3}|\\
   &\leq \int_{|h_{1}^2+h_{2}^2|\leq M^{-1+\delta},h_{3}\in\mathds{R}}\frac{CM^{\epsilon-b}h_{2}}{|h|^3}h_{2}\Big( 1+h_{2}M^{\frac{1}{2}+b}+M^{\frac{1}{2}+\epsilon-b}+M^{b}\Big)dh_{1}dh_{2}dh_{3}\\
   &\leq C M^{\epsilon-b}\int_{|h_{1}^2+h_{2}^2|\leq M^{-1+\delta}}\frac{h_{2}^2}{h_{1}^2+h_{2}^2}\Big(h_{2}M^{\frac{1}{2}+b}+M^{\frac{1}{2}+\epsilon-b}+M^{b}\Big)dh_{1}dh_{2}\leq C M^{2\epsilon-\frac{3}{2}+2\delta}.
\end{align*}
Finally, using that
\begin{align*}
    &|\int_{\mathds{R}}\frac{1}{|h|^3} \sin(Mg_{er,3,b}(x_{2})(x_{3}+h_{3}))[f(M^{\frac{1}{2}}g_{er,3,b}(x_{2})(x_{3}+h_{3}))-f(M^{\frac{1}{2}}g_{er,3,b}(x_{2})x_{3})]dh_{3}|\\
    &\leq C\int_{-\infty}^{\infty}\frac{M^{\frac12}|h_{3}|}{|h|^3}dh_{3}\leq C\frac{M^{\frac12}}{(h_{1}^2+h_{2}^2)^\frac{1}{2}}
\end{align*}
and combining this with all the other properties, we get that
\begin{align*}
    &|u_{3}((\w_{1,b},0,0))-\int_{|h_{1}^2+h_{2}^2|\leq M^{-1+\delta},h_{3}\in\mathds{R}}M^{\epsilon-b}\frac{h_{2}}{|h|^3}a_{b}(x_{2})f(M^{\frac{1}{2}+b-\epsilon}x_{1}g_{er,1,b}(x_{2}))f(M^{\frac{1}{2}+\epsilon}g_{2,b}(x_{2}))\\
   &\times \sin(M(g_{2,b}(x_{2})+h_{2}\partial_{x_{2}}g_{2,b}(x_{2}))\sin(Mg_{er,3,b}(x_{2})(x_{3}+h_{3}))f(M^{\frac{1}{2}}g_{er,3,b}(x_{2})x_{3})dh_{1}dh_{2}dh_{3}|\\
   &\leq C_{\delta}M^{2\epsilon-\frac{3}{2}+2\delta}\\
   &+\int_{|h_{1}^2+h_{2}^2|\leq M^{-1+\delta},h_{3}\in\mathds{R}}\frac{h_{2}}{|h|^3}M^{\epsilon-b}|a_{b}(x_{2})f(M^{\frac{1}{2}+b-\epsilon}x_{1}g_{er,1,b}(x_{2}))\sin(M(g_{2,b}(x_{2})+h_{2}\partial_{x_{2}}g_{2,b}(x_{2}))\\
   &\times f(M^{\frac{1}{2}+\epsilon}g_{2,b}(x_{2}))\sin(Mg_{er,3,b}(x_{2})(x_{3}+h_{3}))[f(M^{\frac{1}{2}}g_{er,3,b}(x_{2})(x_{3}+h_{3})-f(M^{\frac{1}{2}}g_{er,3,b}(x_{2})x_{3})]dh_{1}dh_{2}dh_{3}|\\
   &\leq C_{\delta}M^{2\epsilon-\frac{3}{2}+2\delta}+C\int_{|h_{1}^2+h_{2}^2|\leq M^{-1+\delta}}M^{\epsilon-b}\frac{h_{2}M^{\frac12}}{(h_{1}^2+h_{2}^2)^\frac{1}{2}}dh_{1}dh_{2}\leq C_{\delta}M^{2\epsilon-\frac{3}{2}+2\delta}.\\
\end{align*}

This means that, to obtain the expression for $\tilde{u}_3$, it is enough to study

\begin{align*}
    &\int_{|h_{1}^2+h_{2}^2|\leq M^{-1+\delta},h_{3}\in\mathds{R}}M^{\epsilon-b}a_{b}(x_{2})f(M^{\frac{1}{2}+b-\epsilon}x_{1}g_{er,1,b}(x_{2}))\sin(M(g_{2,b}(x_{2})+h_{2}\partial_{x_{2}}g_{2,b}(x_{2}))\\
   &\times f(M^{\frac{1}{2}+\epsilon}g_{2,b}(x_{2}))\sin(Mg_{er,3,b}(x_{2})(x_{3}+h_{3}))f(M^{\frac{1}{2}}g_{er,3,b}(x_{2})x_{3})dh_{1}dh_{2}dh_{3}
\end{align*}
and, in particular, since most integrands do not depend on $h_{1}$, $h_{2}$ or $h_{3}$ it is enough to study
\begin{align*}
    &\int_{|h_{1}^2+h_{2}^2|\leq M^{-1+\delta},h_{3}\in\mathds{R}}\frac{h_{2}}{|h|^3}\sin(M(g_{2,b}(x_{2})+h_{2}\partial_{x_{2}}g_{2,b}(x_{2})) \sin(Mg_{er,3,b}(x_{2})(x_{3}+h_{3}))dh_{1}dh_{2}dh_{3}\\
    &=\sin(Mg_{er,3,b}(x_{2})x_{3})\cos(M(g_{2,b}(x_{2}))\int_{|h_{1}^2+h_{2}^2|\leq M^{-1+\delta},h_{3}\in\mathds{R}}\frac{h_{2}}{|h|^3}\sin(Mh_{2}\partial_{x_{2}}(g_{2,b})(x_{2})) \\
    &\times\cos(Mg_{er,3,b}(x_{2})h_{3}))dh_{1}dh_{2}dh_{3}\\
    &=\frac{\sin(Mg_{er,3,b}(x_{2})x_{3})\cos(M(g_{2,b}(x_{2}))}{M}\int_{|h_{1}^2+h_{2}^2|\leq M^{\delta},h_{3}\in\mathds{R}}\frac{h_{2}}{|h|^3}\sin(h_{2}(\partial_{x_{2}}g_{2,b})(x_{2})) \\
    &\times\cos(g_{er,3,b}(x_{2})h_{3}))dh_{1}dh_{2}dh_{3},
\end{align*}
so we will study
$$H_{\lambda}:=\int_{|h_{1}^2+h_{2}^2|\leq \lambda,h_{3}\in\mathds{R}}\frac{h_{2}}{|h|^3}\sin(h_{2}\partial_{x_{2}}(g_{2,b})(x_{2}))\cos(g_{er,3,b}(x_{2})h_{3}))dh_{1}dh_{2}dh_{3}$$
Note that if we show that
$$\text{lim}_{\lambda\rightarrow{\infty}} H_{\lambda}=C_{0}\frac{\partial_{x_{2}}(g_{2,b})(x_{2})}{(g_{3,er,b}(x_{2}))^2+(\p_{x_{2}}g_{2,b}(x_{2}))^2}$$
and
\begin{equation}\label{HM1}
    |H_{\lambda_{1}}-H_{\lambda_{2}}|\leq C [\text{min}(\lambda_{1},\lambda_{2})]^{-\frac{1}{2\delta}},
\end{equation}
then we have showed the desired bound for \eqref{linftu3} with $j=0$. 

To show \eqref{HM1}, we note that by using integration by parts with respect to $h_{3}$ $k$ times to get

$$|H_{\lambda_{2}}-H_{\lambda_{1}}|\leq \int_{\lambda_{1}\leq |h_{1}^2+h_{2}^2|\leq \lambda_{2},h_{3}\in\mathds{R}}\frac{C_{k}}{|h|^{2+k}}dh_{1}dh_{2}dh_{3}\leq C_{k}[\text{min}(\lambda_{1},\lambda_{2})]^{-k+1}$$
so taking $k$ big gives us \eqref{HM1}. Note that in particular this shows that the limit of $H_{\lambda}$ when $\lambda$ tends to infinity exists.

Then we can use integration by parts with respect to $h_{3}$ to show that
$$H_{\lambda}=\frac{1}{g_{er,3,b}(x_{2})}\int_{|h_{1}^2+h_{2}^2|\leq \lambda,h_{3}\in\mathds{R}}\frac{3h_{2}h_{3}}{|h|^5}\sin(h_{2}\partial_{x_{2}}(g_{2,b})(x_{2}))\sin(g_{er,3,b}(x_{2})h_{3}))dh_{1}dh_{2}dh_{3}.$$
Now, if we define
$$\tilde{H}_{\lambda}=\frac{1}{g_{er,3,b}(x_{2})}\int_{|h_{1}^2|\leq \lambda,\ h_{2},h_{3}\in\mathds{R}}\frac{3h_{2}h_{3}}{|h|^5}\sin(h_{2}\partial_{x_{2}}(g_{2,b})(x_{2}))\sin(g_{er,3,b}(x_{2})h_{3}))dh_{1}dh_{2}dh_{3},$$
which is well defined since $\frac{h_{2}h_{3}}{|h|^5}$ has enough decay at infinity, using integration by parts with respect to $h_{3}$ once we have that

\begin{align*}
   &|H_{\lambda}-\tilde{H}_{\lambda}|\leq |\frac{1}{g_{er,3,b}(x_{2})}\int_{|h_{1}^2|\leq \lambda,|h_{1}^2+h_{2}^2|\geq \lambda\,h_{3}\in\mathds{R}}\frac{3h_{2}h_{3}}{|h|^5}\sin(h_{2}\partial_{x_{2}}(g_{2,b})(x_{2}))\sin(g_{er,3,b}(x_{2})h_{3}))dh_{1}dh_{2}dh_{3}|\\ 
   &\leq \frac{C}{|g_{er,3,b}(x_{2})|}\int_{|h_{1}^2+h_{2}^2|\geq \lambda}\frac{1}{|h_{1}^2+h_{2}^2|^\frac{3}{2}}dh_{1}dh_{2}\leq C \lambda
\end{align*}
so that in particular 

$$\text{lim}_{\lambda\rightarrow \infty}H_{\lambda}=\text{lim}_{\lambda\rightarrow \infty}\tilde{H}_{\lambda}.$$
But, if we define $R:=(g_{er,3,b}(x_{2})^2+\partial_{x_{2}}(g_{2,b})(x_{2})^2)^{\frac{1}{2}}$, and using the change of variables $Rz_{1}=h_{2}\partial_{x_{2}}(g_{2,b})(x_{2})+g_{er,3,b}(x_{2})h_{3}),Rz_{2}=-h_{3}\partial_{x_{2}}(g_{2,b})(x_{2})+g_{er,3,b}(x_{2})h_{2}$, and using that the integrands with the wrong parity (in $h_{2},h_{3},z_{1}$ or $z_{2}$) cancel out when we integrate, we have
\begin{align*}
    &\int_{|h_{1}^2|\leq \lambda,\ h_{2},h_{3}\in\mathds{R}}\frac{3h_{2}h_{3}}{|h|^5}\sin(h_{2}\partial_{x_{2}}(g_{2,b})(x_{2}))\sin(g_{er,3,b}(x_{2})h_{3}))dh_{1}dh_{2}dh_{3}\\
    &=-\text{PV}\int_{|h_{1}^2|\leq \lambda,\ h_{2},h_{3}\in\mathds{R}}\frac{3h_{2}h_{3}}{|h|^5}\cos(h_{2}\partial_{x_{2}}(g_{2,b})(x_{2})+g_{er,3,b}(x_{2})h_{3}))dh_{1}dh_{2}dh_{3}\\
    &=-\frac{g_{er,3,b}(x_{2})\partial_{x_{2}}(g_{2,b})(x_{2})}{R^2}\text{PV}\int_{|h_{1}^2|\leq \lambda,\ z_{1},z_{2}\in\mathds{R}}\frac{3(z_{1}^2-z_{2}^2)}{|h_{1}^2+z_{1}^2+z_{2}^2|^5}\cos(Rz_{1})dh_{1}dz_{1}dz_{2}\\
    &=-\frac{g_{er,3,b}(x_{2})\partial_{x_{2}}(g_{2,b})(x_{2})}{R^2}\text{PV}\int_{|\tilde{h}_{1}^2|\leq R^2\lambda,\tilde{h}_{2},\tilde{h}_{3}\in\mathds{R}}\frac{3(\tilde{h}_{2}^2-\tilde{h}_{3}^2)}{|\tilde{h}_{1}^2+\tilde{h}_{2}^2+\tilde{h}_{3}^2|^5}\cos(\tilde{h}_{2})d\tilde{h}_{1}d\tilde{h}_{2}d\tilde{h}_{3}
\end{align*}
and, after relabelling, if we denote
$$C_{0}:=-\text{PV}\int_{|h_{1}^2|\leq R^2\lambda,\ z_{1},z_{2}\in\mathds{R}}\frac{3(h_{2}^2-h_{3}^2)}{|h_{1}^2+h_{2}^2+h_{3}^2|^5}\cos(h_{2})dh_{1}dh_{2}dh_{3}$$
we have that
$$\text{lim}_{\lambda\rightarrow \infty}H_{\lambda}= C_{0}\frac{\partial_{x_{2}}(g_{2,b})(x_{2})}{(g_{3,er,b}(x_{2}))^2+(\p_{x_{2}}g_{2,b}(x_{2}))^2}$$
as we wanted. The only remaining thing to show \eqref{linftu3} is to prove that $C_{0}>0$. 
%
 For this, first we note that since $C_{0}$ does not depend on $g_{er,3,b}(x_{2})$ or $\partial_{x_{2}}(g_{2,b})(x_{2})$, we can take them to be equal to $1$. Next, we define

 \begin{align*}
     \bar{H}_{\lambda}=3\int_{h_{1}\in\mathds{R},|h_{2}^2|,|h_{3}^2|\leq \lambda}\frac{h_{2}h_{3}}{|h|^5}\sin(h_{2})\sin(h_{3}))dh_{1}dh_{2}dh_{3}
 \end{align*}
 and we can check that, for $\lambda_{n}=((n+\frac{1}{2})\pi )^2$, $\text{lim}_{n\rightarrow\infty}\tilde{H}_{\lambda_{n}}=\text{lim}_{n\rightarrow\infty}\bar{H}_{\lambda_{n}}$ since 
for $\lambda_{n}$ as defined earlier we have, we can use integration by parts with respect to $h_{3}$ to get
\begin{align*}
    &\text{lim}_{\lambda_{n}\rightarrow\infty}\int_{h^2_{1}\geq \lambda_{n} , h_{2}^2,h_{3}^2\leq \lambda_{n}}\frac{h_{2}h_{3}}{|h|^5}\sin(h_{2})\sin(h_{3})dh_{1}dh_{2}dh_{3}|\\
    &\leq \text{lim}_{\lambda_{n}\rightarrow\infty}\int_{h^2_{1}\geq \lambda_{n}, h_{2}^2,h_{3}^2\leq \lambda_{n}}\frac{1}{|h|^4}dh_{1}dh_{2}dh_{3}=0
\end{align*}

\begin{align*}
    &\text{lim}_{\lambda_{n}\rightarrow\infty}\int_{h_{2}^2,h_{3}^2\geq \lambda_{n} ,h^2_{1} \leq \lambda_{n}}\frac{h_{2}h_{3}}{|h|^5}\sin(h_{2})\sin(h_{3})dh_{1}dh_{2}dh_{3}|\\
    &\leq \text{lim}_{\lambda_{n}\rightarrow\infty}\int_{h_{2}^2,h_{3}^2\geq \lambda_{n} ,h^2_{1} \leq \lambda_{n}}\frac{1}{|h|^4}dh_{1}dh_{2}dh_{3}=0.
\end{align*}

With this in mind, we note that
\begin{align*}
    &\bar{H}_{\lambda_{n}}=3\int_{h_{1}\in\mathds{R},|h_{2}^2|,|h_{3}^2|\leq \lambda}\frac{h_{2}h_{3}}{|h|^5}\sin(h_{2})\sin(h_{3})dh_{1}dh_{2}dh_{3}\\
    &=3\int_{|h_{2}^2|,|h_{3}^2|\leq \lambda_{n}}\sin(h_{2})\sin(h_{3})\frac{h_{2}h_{3}}{h_{2}^2+h_{3}^2}\Big(\int_{-\infty}^{\infty}\frac{1}{|1+\frac{h_{1}^2}{h_{2}^2+h_{3}^2}|^5}\frac{1}{|h_{2}^2+h_{3}^2|^{\frac{1}{2}}}dh_{1}\Big)dh_{2}dh_{3}\\
    &=C\int_{|h_{2}^2|,|h_{3}^2|\leq \lambda_{n}}\sin(h_{2})\sin(h_{3})\frac{h_{2}h_{3}}{(h_{2}^2+h_{3}^2)^2}dh_{2}dh_{3}\\
    &=C\int_{0}^{\frac{\pi}{2}}\int_{0}^{\lambda_{n}L(\alpha)}\sin(r\sin(\alpha))\sin(r\cos(\alpha))\frac{r\sin(\alpha)r\cos(\alpha)}{r^4}rdrd\alpha\\
    &=C\int_{0}^{\frac{\pi}{2}}\sin(\alpha)\cos(\alpha)\int_{0}^{\lambda_{n}L(\alpha)}\frac{\cos(r(\sin(\alpha)-\cos(\alpha)))-\cos(r(\sin(\alpha)+\cos(\alpha)))}{r}drd\alpha\\
\end{align*}
where $C>0$ changes from line to line, we changed to polar coordinates in the fourth line and $L(\alpha)$ is the function that, given $\alpha\in[0,\frac{\pi}{2}]$ gives us the maximum value of $r$ such that $(r\sin(\alpha),r\cos(\alpha))\in [0,1]\times[0,1]$. But now, if we define
\begin{align*}
    &G(K,A,B):=\text{PV}\int_{0}^{K}\frac{\cos(rA)-\cos(rB)}{r}dr\\
    &=C_{A,B}-(\text{PV}\int_{K}^{\infty}\frac{\cos(rA)}{r}dr-\int_{K}^{\infty}\frac{\cos(rB)}{r}dr)\\
    &=C_{A,B}+\int_{KA}^{KB}\frac{\cos(r)}{r}dr
\end{align*}
which in particular gives, by taking the limit when $K$ is small, that $C_{A,B}=\ln(B)-\ln(A)$. Note also that $|G(K,A,B)|\leq 2(\ln(A)-\ln(B))$ and integration by parts gives us that, for $A,B>0$
$$\text{lim}_{K\rightarrow\infty}\int_{KA}^{KB}\frac{\cos(r)}{r}d=0$$
so we can apply the dominated convergence theorem to 
$$G_{\lambda_{n}}(\alpha):=\int_{0}^{\lambda_{n}L(\alpha)}\frac{\cos(r(\sin(\alpha)+\cos(\alpha)))-\cos(r(\sin(\alpha)-\cos(\alpha)))}{r}dr$$
to get
$$\frac{C_{0}}{2}=\text{lim}_{\lambda_{n}\rightarrow\infty}H_{\lambda_{n}}=C\int_{0}^{\frac{\pi}{2}}\sin(\alpha)\cos(\alpha)\ln(\frac{\sin(\alpha)+\cos(\alpha)}{|\sin(\alpha)-\cos(\alpha)|})d\alpha> 0$$
as we wanted.

For the bounds in $C^1$, the exact same  steps we followed to obtain the $L^{\infty}$ bound \eqref{linftu3}  allow us to get

$$||u_{j}(\p_{x_{i}}\w_{1,b},0,0)-\tilde{u}_{j}(\p_{x_{i}}\w_{1,b})||_{L^{\infty}}\leq C_{\delta}M^{-\frac{1}{2}+2\delta}$$
for $i=1,2,3$ and $j=2,3$, since we can decompose $\p_{x_{i}}\w_{1,b}$ in functions with the same structure as $\w_{1,b}$ (by aplying Leibniz's rule and looking at each of the individual terms obtained). Furthermore, since $\p_{x_{i}}u(\w)=u(\p_{x_{i}}\w)$, we have

$$||\p_{x_{i}}u_{j}(\w_{1,b},0,0)-\p_{x_{i}}\tilde{u}_{j}(\w_{1,b})||_{L^{\infty}}\leq ||u_{j}(\p_{x_{i}}\w_{1,b},0,0)-\tilde{u}_{j}(\p_{x_{i}}\w_{1,b})||_{L^{\infty}}+||\tilde{u}_{j}(\p_{x_{i}}\w_{1,b})-\p_{x_{i}}\tilde{u}_{j}(\w_{1,b})||_{L^{\infty}}$$
and thus to obtain the $C^1$ bounds it is enough to prove that
$$||\tilde{u}_{j}(\p_{x_{i}}\w_{1,b})-\p_{x_{i}}\tilde{u}_{j}(\w_{1,b})||_{L^{\infty}}\leq C_{\delta}M^{-\frac{1}{2}+2\delta}.$$

But (focusing on $\tilde{u}_{3}$, $\tilde{u}_{2}$ being analogous)

\begin{align*}
    &|\tilde{u}_{3}(\p_{x_{i}}\w_{1,b})-\p_{x_{i}}\tilde{u}_{3}(\w_{1,b})|\\
    &=|C_{0}(\p_{x_{i}}[\frac{M\p_{x_{2}}g_{2,b}(x_{2})}{(Mg_{3,er,b}(x_{2}))^2+(M\p_{x_{2}}g_{2,b}(x_{2}))^2}])M^{\epsilon-b}a_{b}(x_{2})f(M^{\frac{1}{2}+b-\epsilon}x_{1}g_{er,1,b}(x_{2}))\\
    &\times \cos(Mg_{2,b}(x_{2}))f(M^{\frac{1}{2}+\epsilon}g_{2,b}(x_{2})) \sin(Mg_{er,3,b}(x_{2})x_{3})f(M^{\frac{1}{2}}g_{er,3,b}(x_{2})x_{3})|\\
    &\leq C|(\p_{x_{i}}[\frac{\p_{x_{2}}g_{2,b}(x_{2})}{g_{3,er,b}(x_{2})^2+\p_{x_{2}}g_{2,b}(x_{2})^2}])|M^{\epsilon-b-1}\leq CM^{\epsilon-1}.\\
\end{align*}
For the bounds with the $\p_{x_{i}}$ derivative, $i=1,3$, the proof is exactly the same, since taking a derivative with respect to $x_{1}$ or $x_{3}$ gives us a function with the same properties. Relabeling $\delta$ in the error bounds so that $\delta_{new}=2\delta_{old}$ finishes the proof.





\end{proof}


\begin{lemma}\label{cuaderror}
    If we have $N,M>0$ big enough, then for $t\in[1-N^{-\frac{\epsilon}{2}},1]$, $\alpha\in[0,1]$
    $$||\w\cdot\nabla u(\w)||_{C^{\alpha}},||u(\w)\cdot\nabla \w||_{C^{\alpha}}\leq CM^{3\epsilon-\frac{1}{2}+\alpha}.$$
\end{lemma}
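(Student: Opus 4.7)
The plan is to exploit the explicit oscillatory approximation to $u(\w_1 e_1)$ provided by Lemma \ref{vaprox}, together with a decomposition that isolates the $\w_1$-driven contributions. I would start by writing $u(\w)=u(\w_1 e_1)+u(\w_3 e_3)$ and using $u_i(\w_i e_i)=0$, so that only $u_2(\w_1 e_1)$, $u_3(\w_1 e_1)$, $u_1(\w_3 e_3)$, and $u_2(\w_3 e_3)$ appear in the quadratic terms. Since $\w_3=-\int \p_{x_1}\w_1\,dx_3$ has amplitude only $O(M^{-1/2})$ (compared with $\w_1\sim M^{\epsilon-b}$), all the $\w_3$-cross terms already lie comfortably below the target $M^{3\epsilon-1/2+\alpha}$. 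The difficulty lies in the $\w_1$-driven terms: naive size estimates give $\|\w_1\nabla u(\w_1 e_1)\|_{L^\infty},\|u(\w_1 e_1)\cdot\nabla\w_1\|_{L^\infty}\lesssim M^{2\epsilon-3b}$, which, for small $\epsilon$ and $b$ near $0$, far exceeds $M^{3\epsilon-1/2}$. So the proof must extract a genuine cancellation.

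The cancellation comes from the specific form of $\tilde u_2,\tilde u_3$ in Lemma \ref{vaprox}: the prefactors $\tfrac{Mg_{er,3,b}}{D}$ and $-\tfrac{M\p_{x_2}g_{2,b}}{D}$, with $D=(Mg_{er,3,b})^2+(M\p_{x_2}g_{2,b})^2$, are precisely those needed so that when derivatives hit the high-frequency factors $\sin(Mg_{2,b})$ and $\sin(Mg_{er,3,b}x_3)$ in $\w_1$, the sum $\tilde u_2\p_{x_2}\w_1+\tilde u_3\p_{x_3}\w_1$ vanishes at leading order. The same identity gives the analogous cancellation in $\w_1\p_{x_1}\tilde u_1+\w_1\p_{x_3}\tilde u_3$ type combinations inside $\w\cdot\nabla u$. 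After replacing $u$ by $\tilde u$ and subtracting the leading cancelling piece, what remains falls into two categories: the errors $u-\tilde u$, controlled by Lemma \ref{vaprox} by $O(M^{2\epsilon-3/2+\delta})$ in $L^\infty$ and $O(M^{2\epsilon-1/2+\delta})$ in $C^1$, and the subleading terms in which at least one derivative has fallen on a slowly varying factor (one of the amplitudes $a_b$, the $g_{er,i,b}$'s, or the envelopes $f$). The bounds from Lemma \ref{evolucionsimp} show that every such misplaced $x_2$-derivative costs at most $O(M^{1/2+\epsilon})$, while the phase derivatives it replaces would have contributed $O(M)$; the ratio $O(M^{-1/2+\epsilon})$ multiplied into the naive $M^{2\epsilon-3b}$ yields exactly $M^{3\epsilon-1/2-3b}$, giving the $L^\infty$ bound.

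For the $C^\alpha$ norm with $\alpha\in(0,1]$, I would repeat the same expansion on $\nabla(u\cdot\nabla\w)$ and $\nabla(\w\cdot\nabla u)$, where each extra spatial derivative costs at most a factor $M$ (either from $Mg_{er,3,b}$, $M\p_{x_2}g_{2,b}$, or $M^{1/2+b-\epsilon}$ for $x_1$-derivatives), producing a $C^1$ bound $M^{3\epsilon+1/2}$. The standard interpolation $\|f\|_{C^\alpha}\lesssim \|f\|_{L^\infty}^{1-\alpha}\|f\|_{C^1}^{\alpha}$ then gives the desired $M^{3\epsilon-1/2+\alpha}$. The hard part will be the bookkeeping: Leibniz's rule generates a large number of subleading terms, and one must verify that the leading cancellation absorbs every contribution of size exceeding $M^{3\epsilon-1/2}$, while the remaining pieces — cross terms with $\w_3$, Biot–Savart errors, and all the amplitude-derivative contributions — stay within the target bound uniformly over $b\in[0,1/2]$.
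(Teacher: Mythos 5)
Your treatment of the $\w_{1}$-driven terms follows essentially the paper's route: replace $u_{2},u_{3}$ by the explicit $\tilde{u}_{2},\tilde{u}_{3}$ of Lemma \ref{vaprox}, exploit the exact leading-order cancellation in $\tilde{u}_{2}\p_{x_{2}}\w_{1,b}+\tilde{u}_{3}\p_{x_{3}}\w_{1,b}$, control the Biot--Savart errors and the amplitude-derivative remainders, and interpolate between $C^{0}$ and $C^{1}$. However, there is a genuine gap in your dismissal of the velocity generated by $\w_{3}$. You claim all $\w_{3}$-cross terms are ``comfortably below the target'' merely because $\|\w_{3,b}\|_{L^{\infty}}\lesssim M^{-\frac12}$. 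For the transport term this is not enough: with only amplitude information the best one gets is $\|u(0,0,\w_{3,b})\|_{L^{\infty}}\lesssim \|\w_{3,b}\|_{L^{\infty}}\sim M^{-\frac12}$, and multiplying by $\|\nabla \w_{1,b}\|_{L^{\infty}}\sim M^{1+\epsilon-b}$ gives $M^{\frac12+\epsilon-b}$, which for $b$ near $0$ exceeds the target $M^{3\epsilon-\frac12}$ by essentially a full power of $M$ (similarly $u(0,0,\w_{3,b})\cdot\nabla\w_{3,b}$ is of order $1$ by naive bounds). What is actually needed, and what the paper proves as a separate step, is the refined oscillatory estimate $\|u_{j}(0,0,\w_{3,b})\|_{C^{i}}\leq C_{\delta}M^{-\frac32+i+\delta}$: one writes $\w_{3,b}$ as an $M^{-\frac12}$-amplitude profile oscillating at frequency of order $M$ in $x_{3}$ and integrates by parts repeatedly in the $h_{3}$ variable of the Biot--Savart kernel, after splitting the $(h_{1},h_{2})$ integration at scale $M^{-1+\delta}$. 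Your proposal contains no mechanism producing this extra gain of roughly $M^{-1}$, so as written the stated bound is not reached.

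A secondary inaccuracy: the ``analogous cancellation'' you invoke inside $\w\cdot\nabla u$, written as $\w_{1}\p_{x_{1}}\tilde{u}_{1}+\w_{1}\p_{x_{3}}\tilde{u}_{3}$, is not the right structure (note $u_{1}(\w_{1,b},0,0)=0$, and since $\w_{2}=0$ the stretching term is $\w_{1}\p_{x_{1}}u_{k}+\w_{3}\p_{x_{3}}u_{k}$). In fact no cancellation is needed there: $\p_{x_{1}}$ only hits the low-frequency $x_{1}$-envelope, so $\|\p_{x_{1}}u_{k}(\w_{1,b},0,0)\|_{L^{\infty}}\lesssim M^{2\epsilon-\frac12+\delta}$, while the dangerous $x_{3}$-derivative, of size about $M^{\epsilon-b}$ by singular-integral bounds, is paired with the small factor $\w_{3,b}=O(M^{-\frac12})$. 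This part is easily repaired, but the missing oscillatory estimate for $u(0,0,\w_{3,b})$ is an essential ingredient you would still have to supply.
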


\begin{proof}

First we note that, using the properties of $\w_{1,b},\w_{3,b}$
we have that, for $i=0,1,2$
\begin{equation}\label{w1bw3b}
    ||\w_{1,b}||_{C^{i}}\leq \frac{CM^{\epsilon+i}}{K_{N}(t)},||\w_{3,b}||_{C^{i}}\leq CM^{i-\frac{1}{2}}.
\end{equation}
Furthermore, since we can apply lemma \ref{vaprox}, using the properties of $\tilde{u}(\w_{1,b})$ and of $\tilde{w}_{1,b}$  plus some direct computations gives us for $i=0,1$
$$||\tilde{u}_{2}(\w_{1,b})\p_{x_{2}}\w_{1,b}+\tilde{u}_{3}(\w_{1,b})\p_{x_{3}}(w_{1,b})||_{C^{i}}\leq CM^{3\epsilon-2b-\frac{1}{2}+i}$$
since the biggest terms, of order $M^{2\epsilon-2b+i}$, cancel out. On the other hand, just the bounds for $\tilde{u}$ and $\w_{3,b}$ gives us

$$||\tilde{u}_{2}(\w_{1,b})\p_{x_{2}}\w_{3,b}+\tilde{u}_{3}(\w_{1,b})\p_{x_{3}}(\w_{3,b})||_{C^{i}}\leq CM^{\epsilon-b-\frac{1}{2}+i}.$$

But, since, for $j=2,3$, $i=0,1$, for any $\delta>0$
$$||\tilde{u}_{j}(\w_{1,b})-u_{j}(\w_{1,b})||_{C^{i}}\leq C_{\delta}M^{2\epsilon-\frac{3}{2}+\delta}$$
we have, for $i=0,1$
$$||(\tilde{u}_{j}(\w_{1,b})-u_{j}(\w_{1,b}))\p_{x_{j}}\w_{1,b}||_{C^{i}}\leq C_{\delta} M^{3\epsilon-\frac{1}{2}+i+\delta-b},||(\tilde{u}_{j}(\w_{1,b})-u_{j}(\w_{1,b}))\p_{x_{j}}\tilde{w}_{3,b}||_{C^{i}}\leq C_{\delta} M^{2\epsilon-1+i+\delta},$$
and combining the bounds we get, for  $i=0,1$
\begin{equation}\label{uw1}
    ||u(\w_{1}(x,t))\cdot\nabla \w(x,t)||_{C^{i}}\leq C_{\delta}M^{3\epsilon-\frac{1}{2}+i+\delta}
\end{equation}

Regarding $\w_{3,b}$, since we have

$$\w_{3,b}=M^{\frac{1}{2}}p(x_{1},x_{2})\int_{-\infty}^{x_{3}}\sin(Mg_{er,3,b}(x_{2})z)f(M^{\frac{1}{2}}g_{er,3,b}(x_{2})z)dz$$
which, using integration by parts with respect to $x_{3}$ can be written as
$$M^{-\frac{1}{2}} p(x_{1},x_{2})[\sum_{i=0}^{k}\frac{p_{i}(x_{2},x_{3})}{M^{\frac{i}{2}}}\cos(Mg_{er,3,b}(x_{2})x_{3}+\frac{i\pi}{2})+p^{error}_{k}(x_{2},x_{3})]$$
with $||p(x_{1},x_{2})||_{L^{\infty}}\leq C,||p_{i}(x)||_{L^{\infty}}\leq C_{i},p_{i}(x)\in C^{\infty}, p^{error}_{i}(x_{2},x_{3})\leq C_{i}M^{-\frac{i}{2}}$ and $p,p^{error}_{k}$ and $p_{i}$ supported in $B_{1}(0).$

We can then act as in \eqref{h3small}, to obtain that, for $j=1,2$, for any $a>1$, $\delta>0$

\begin{align*}
    &\int_{|h_{1}|,|h_{2}|\geq M^{-1+\delta}}\int_{\mathds{R}}\frac{h_{j}}{|h|^3}p(x_{1}+h_{1},x_{2}+h_{2})\frac{p_{i}(M^{\frac{1}{2}}g_{er,3,b}(x_{2}+h_{2})(x_{3}+h_{3}))}{M^{\frac{i}{2}}}\\
    &\times\cos(Mg_{er,3,b}(x_{2}+h_{2})(x_{3}+h_{3})+\frac{i\pi}{2})dh_{1}dh_{2}dh_{3}\leq \frac{C_{a,\delta}}{M^{a}},
\end{align*}
and furthermore
\begin{align*}
    &\int_{|h_{1}|,|h_{2}|\geq M^{-1+\delta}}\int_{\mathds{R}}\frac{h_{j}}{|h|^3}p(x_{1}+h_{1},x_{2}+h_{2})p^{error}_{k}(x_{2}+h_{2},x_{3}+h_{3}) dh_{1}dh_{2}dh_{3}\leq \frac{C_{i}}{M^{\frac{i}{2}}},
\end{align*}
so, focusing for example on $u_{1}(0,0,\w_{3,b})$, $u_{2}$ being analogous, we have 

\begin{align*}
    &|u_{1}(0,0,\w_{3,b})|\leq M^{-\frac{1}{2}}(|\int_{|h_{1}|,|h_{2}|\leq M^{-1+\delta}}\int_{\mathds{R}}\frac{h_{2}}{|h|^3}\w_{3}(x+h,t)dh_{3}dh_{1}dh_{2}|+\frac{C}{M})\\
    &\leq CM^{-\frac{1}{2}}(|\int_{|h_{1}|,|h_{2}|\leq M^{-1+\delta}}\frac{1}{|h|}dh_{1}dh_{2}|+\frac{C}{M})\leq C_{\delta}M^{-\frac{3}{2}+\delta}.
\end{align*}
Furthermore, we can use that $\p_{x_{i}}u_{j}$ is a singular integral operator  to obtain, for $\alpha \in(0,1)\cup(1,2)$
$$||\p_{x_{i}}u_{j}(0,0,\w_{3,b})||_{C^\alpha}\leq C||\w_{3,b}||_{C^{\alpha}}\leq CM^{-\frac{1}{2}+\alpha}$$
and combining this with the $L^{\infty}$ bound gives us, for $i=0,1,2$, $\delta>0$
\begin{equation}\label{w3i}
   ||u_{j}(0,0,\w_{3,b})||_{C^{i}}\leq C_{\delta}M^{-\frac{3}{2}+i+\delta} .
\end{equation}
Then, the bounds for $\w_{j}(x,t)$ and its two first derivatives gives us, for $j=1,2,3$, $i=0,1$

\begin{equation}\label{cuaduw3}
    ||u_{j}(\w_{3,b})\cdot\nabla \w(x,t)||_{C^{i}}\leq CM^{-\frac{3}{2}+\delta+\epsilon+i}.
\end{equation}
Combining \eqref{cuaduw3} with \eqref{uw1}, taking $\delta=\epsilon$ and using interpolation gives us for $[0,1]$, 

$$||u(\w(x,t))\cdot\nabla \w(x,t)||_{C^{\alpha}}\leq C M^{4\epsilon-\frac{1}{2}+\alpha}.$$

On the other hand, using lemma \ref{vaprox} we have, for  $i=0,1$, $k=2,3$ that

$$||\p_{x_{1}}u_{k}(\w_{1,b})||_{C^{i}}\leq ||\p_{x_{1}}\tilde{u}_{k}(\w_{1,b})||_{C^{i}}+||\p_{x_{1}}(u_{k}-\tilde{u}_{k})(\w_{1,b})||_{C^{i}}\leq C M^{2\epsilon-\frac{1}{2}+\delta+i}$$
and using that $||\p_{x_{3}}u_{k}||$ is a singular integral operator, the bounds for $w_{1,b}$ and interpolation we get
$$||\p_{x_{3}}u_{k}(\w_{1,b})||_{C^{i}}\leq C M^{\epsilon-b+i}$$
which give us, using the bounds for $\w_{1,b}$ and $\w_{3,b}$, for $i=0,1$

$$||\w_{1,b}\p_{x_{1}}u_{k}(\w_{1,b})||_{C^{i}}\leq CM^{3\epsilon-\frac{1}{2}+\delta+i}$$
$$||\w_{3,b}\p_{x_{3}}u_{k}(\w_{1,b})||_{C^{i}}\leq CM^{\epsilon-\frac{1}{2}+i}$$
and taking $\delta=\epsilon$ and using interpolation gives us that

$$||\w\cdot\nabla u(\w)||_{C^{\alpha}}\leq CM^{4\epsilon-\frac{1}{2}+\alpha}$$
as we wanted.

\end{proof}

In order to control interactions that come from our small layer moving the vorticity of bigger scale layers, we need to obtain bounds for the decay of the velocity generated by the small scale vorticity.

\begin{lemma}\label{decay}
If $M$ is big enough, we have that, if $|x_{1}|\geq4 M^{-\frac{1}{2}-b+\epsilon}$, then for $i=0,1,2$, $j=1,2,3$ and any $k>1$

\begin{equation}\label{decay1}
    |D^{i}u_{j}(\w_{1,b})(x)|\leq C_{k}M^{-k}.
\end{equation}
Similarly, if $|x_{2}|\geq 4M^{-\frac{1}{2}-\epsilon+b}$, $i=0,1,2$ then
\begin{equation}\label{decay2}
    |D^{i}u_{j}(\w_{1,b})(x)|\leq C_{k}M^{-k}.
\end{equation}


\end{lemma}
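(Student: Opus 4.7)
The two features of $\w_{1,b}$ to exploit are: (i) its support is contained in the thin rectangular region $\{|y_{1}|\leq 2M^{-\frac{1}{2}-b+\epsilon},\ |y_{2}|\leq 2M^{-\frac{1}{2}-\epsilon+b}\}$, which follows from the explicit structure \eqref{w1b}, the bound $|g_{er,1,b}|\geq \frac{1}{2}$, and the support control from Lemma \ref{evolucionsimp}; and (ii) it oscillates at frequency of order $M$ in $y_{3}$ through the factor $\sin(Mg_{er,3,b}(y_{2})y_{3})$, with $|g_{er,3,b}|\geq \frac{1}{2}$. Under the hypothesis of \eqref{decay1} (resp.~\eqref{decay2}), the distance from $x$ to $\text{supp}(\w_{1,b})$ is at least $2M^{-\frac{1}{2}-b+\epsilon}$ (resp.~$2M^{-\frac{1}{2}-\epsilon+b}$), so the Biot-Savart kernel $K_{j}(h)$ (satisfying $|\p_{h}^{\alpha}K_{j}(h)|\leq C_{\alpha}|h|^{-2-|\alpha|}$) is smooth on the support of $\w_{1,b}$ and only polynomially large in $M$ there.

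Starting from the representation $u_{j}(\w_{1,b})(x)=\int K_{j}(x-y)\w_{1,b}(y)\,dy$, the plan is to integrate by parts in $y_{3}$ iteratively using the identity
$$\sin(Mg_{er,3,b}(y_{2})y_{3})=-\frac{1}{Mg_{er,3,b}(y_{2})}\p_{y_{3}}\cos(Mg_{er,3,b}(y_{2})y_{3})$$
and its $\cos\leftrightarrow\sin$ analogue at successive steps. Each step multiplies the integrand by $O(M^{-1})$ (from $(Mg_{er,3,b})^{-1}$) and moves one derivative onto either the lone $y_{3}$-dependent amplitude factor $f(M^{\frac{1}{2}}g_{er,3,b}(y_{2})y_{3})$, costing $O(M^{\frac{1}{2}})$, or onto the kernel $K_{j}(x-y)$, costing $O(|x-y|^{-1})$. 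In case \eqref{decay1} the latter cost is at most $O(M^{\frac{1}{2}+b-\epsilon})$ and in case \eqref{decay2} at most $O(M^{\frac{1}{2}+\epsilon-b})$; combining with the $O(M^{-1})$ gain yields a net per-step factor of at most $M^{-\epsilon}$ in case \eqref{decay1} (using $b\leq \frac{1}{2}$) and at most $M^{-\frac{1}{2}+\epsilon}$ in case \eqref{decay2} (using $\epsilon<\frac{1}{100}$). After $N$ iterations this produces a bound of the form $C_{N}M^{-cN}$ times a fixed polynomial in $M$, where $c>0$ depends only on $\epsilon$; taking $N$ large then gives $C_{k}M^{-k}$ for any $k>1$, as required.

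The derivative estimates $D^{i}u_{j}$, $i=1,2$, follow by the same argument applied to $\p_{x}^{i}K_{j}(x-y)$ in place of $K_{j}(x-y)$, which only introduces an additional factor $|x-y|^{-i}$, i.e.\ a polynomial in $M$, that is absorbed by the super-polynomial decay coming from the iterated integration by parts. The main technical obstacle is the bookkeeping: after $N$ steps the integrand is a finite sum of terms in which the derivatives distribute among the various amplitude factors ($a_{b}$, $g_{2,b}$, $g_{er,1,b}$, $g_{er,3,b}$, the cutoffs $f(\cdot)$) and the kernel, and one must check that every such term satisfies the per-step gain $M^{-c}$ uniformly in $b\in[0,\frac{1}{2}]$ and in $N$. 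This reduces to the derivative bounds on $a_{b},g_{2,b},g_{er,i,b}$ provided by Lemma \ref{evolucionsimp} together with the explicit control of derivatives of the fixed bump $f$.
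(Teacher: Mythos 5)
Your plan is correct and follows essentially the same route as the paper: restrict to the support of $\w_{1,b}$ so that $|x-y|$ is bounded below, then iterate integration by parts in the $y_{3}$ variable against the oscillation $\sin(Mg_{er,3,b}(y_{2})y_{3})$, gaining a fixed negative power of $M$ per step (the paper's computation is the one already used in \eqref{h3small}). The only cosmetic difference is that for $i=1,2$ the paper uses $D^{i}u(\w_{1,b})=u(D^{i}\w_{1,b})$ rather than differentiating the kernel, which is an equivalent way of absorbing the extra polynomial factor.
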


\begin{proof}
We begin by showing \eqref{decay1} with $i=0$ and in the case $|x_{1}|\geq4 M^{-\frac{1}{2}-b+\epsilon}$, the case $|x_{2}|\geq 4M^{-\frac{1}{2}-\epsilon+b}$ being analogous. Similarly, we consider only $j=2$. We first note that $$\text{supp}(\w_{1,b})\subset[-2M^{-\frac{1}{2}-b+\epsilon},2M^{-\frac{1}{2}-b+\epsilon}]\times[-2M^{-\frac{1}{2}-\epsilon+b},2M^{-\frac{1}{2}-\epsilon+b}]\times[2M^{-\frac12},2M^{-\frac12}].$$
But, acting exactly as in \eqref{h3small}, using integration by parts with respect to $h_{3}$ we can obtain, for $|h_{1}|\geq M^{-\frac{1}{2}-b+\epsilon}$, for any $m>1$

$$\int_{\mathds{R}}\frac{h_{2}}{|h|^3}\w_{1,b}(x+h)dh_{3}\leq \int_{\mathds{R}} \sum_{l=0}^{m}\frac{C_{m}M^{\frac{l}{2}}}{|h|^{2+m-l}} \frac{1}{M^{m}}dh_{3}\leq  \frac{C_{m}}{M^{\epsilon m}}.$$

Then, using the expression for $u_{2}(\w_{1,b})$ and the bounds for the support of $\w_{1,b}(x)$ we get, if $|x_{1}|\geq 4M^{-\frac12-b+\epsilon}$

$$|u(\w_{1,b})|\leq |\int_{\mathds{R}^3}\frac{h_{2}}{|h|^3}\w_{1,b}(x+h)dh_{3}dh_{2}dh_{1}|\leq |\int_{-2M^{-\frac{1}{2}+\epsilon}-x_{1}}^{-2M^{-\frac{1}{2}+\epsilon}-x_{1}}\int_{-2M^{-\frac{1}{2}-\epsilon+b}-x_{2}}^{2M^{-\frac{1}{2}-\epsilon+b}-x_{2}}\frac{C_{m}}{M^{\frac{m}{2}}}dh_{2}dh_{1}|\leq \frac{C_{m}}{M^{\epsilon m}},$$
and since $m$ is arbitrary this finishes the proof of \eqref{decay1}. For \eqref{decay2} the proof is completely analogous. To obtain the same result for $i=1,2$, we just use the fact that derivatives commute with the velocity operator and argue in the exact same way as before but with $u(D^{i}\w_{1,b})$


\end{proof}
This lemma now allows us to obtain bounds for the interactions that come from the velocity of the small scale layer interacting with the big scale layer.
\begin{lemma}\label{upequeñoagrande}
    Let $\w^{N}(x,t)=(\w^{N}_{1}(x,t)\w^{N}_{2}(x,t),\w^{N}_{3}(x,t))$ be an odd-vorticty such that $||\w^{N}(x,t)||_{C^{2.5}}\leq N^{4}$ for $t\in[1-N^{-\frac{\epsilon}{2}},1]$ and with $\text{supp}(\w^{N}(x,t))\subset |x|\leq 1$.
    Then, if $N$ is big enough, we have that, for $\alpha\in[0,1]$
    \begin{equation}\label{smallbig1}
        ||u(\w)\cdot\nabla\w^{N}||_{C^{\alpha}}\leq CM^{2\epsilon+\alpha-\frac{1}{2}}
    \end{equation}
    \begin{equation}\label{smallbig2}
        ||\w^{N}\cdot\nabla u(\w)||_{C^{\alpha}}\leq CM^{2\epsilon+\alpha-\frac{1}{2}}
    \end{equation}
    
\end{lemma}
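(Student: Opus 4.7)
The plan is to split $u(\w) = u(\w_{1,b},0,0) + u(0,0,\w_{3,b})$ and handle the two inequalities with different strategies. The $u(0,0,\w_{3,b})$ piece is negligible in both: \eqref{w3i} together with the singular-integral bound on $\p_{x_i}u_j$ and interpolation gives $||u(0,0,\w_{3,b})||_{C^\alpha}\leq C_\delta M^{-3/2+\alpha+\delta}$, which sits well below $M^{2\epsilon+\alpha-1/2}$. The main contribution is $u(\w_{1,b},0,0)$, for which Lemma \ref{vaprox} provides the explicit approximant $\tilde u(\w_{1,b})$. Reading the prefactor $\frac{M}{M^2}\cdot M^{\epsilon-b}$ off the formula gives $||\tilde u(\w_{1,b})||_{L^\infty}\leq CM^{\epsilon-b-1}$, and since the dominant oscillations have frequency $M$, each derivative costs at most one factor of $M$; combined with the error bounds of Lemma \ref{vaprox} one obtains $||u(\w_{1,b})||_{L^\infty}\leq CM^{\epsilon-b-1+\delta}$ and $||u(\w_{1,b})||_{C^1}\leq CM^{\epsilon-b+\delta}$, where the $\p_{x_2}$ derivatives are handled via the SIO bound \eqref{lnu}.

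For the first inequality the factor $\nabla\w^N$ is itself small everywhere: $||\nabla\w^N||_{C^\alpha}\leq ||\w^N||_{C^{2.5}}\leq N^4\leq M^{4\delta}$ (since $M^\delta>N$ for $N$ large). Interpolating gives $||u(\w)||_{C^\alpha}\leq CM^{\epsilon-b-1+\alpha+\delta}$, and the H\"older product rule
\[
||u(\w)\cdot\nabla\w^N||_{C^\alpha} \leq C\bigl(||u(\w)||_{C^\alpha}||\nabla\w^N||_{L^\infty}+||u(\w)||_{L^\infty}||\nabla\w^N||_{C^\alpha}\bigr)
\]
yields $CM^{\epsilon-b-1+\alpha+O(\delta)}$, which is $\leq CM^{2\epsilon+\alpha-1/2}$ as soon as $\delta$ is chosen small relative to $\epsilon$.

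The second inequality is harder, since $||\nabla u(\w)||_{L^\infty}\leq CM^{\epsilon-b}$ is \emph{not} small while $||\w^N||_{L^\infty}$ can be as large as $N^4$. The cancellation must come from the oddness of $\w^N$: from the definition, $\w^N_i$ vanishes on the two coordinate planes $\{x_j=0\}$ and $\{x_k=0\}$ with $\{i,j,k\}=\{1,2,3\}$, so $\w^N_i(x)=x_j x_k\,h_i(x)$ with $||h_i||_{L^\infty}\leq C||\w^N||_{C^2}\leq CN^4$. Since $\text{supp}(\w)$ sits in a box of dimensions $M^{-1/2-b+\epsilon}\times M^{-1/2-\epsilon+b}\times M^{-1/2}$, this gives, on $\text{supp}(\w)$, the pointwise bounds
\[
|\w^N_1|\leq CM^{-1-\epsilon+b+4\delta},\quad |\w^N_2|\leq CM^{-1+\epsilon-b+4\delta},\quad |\w^N_3|\leq CM^{-1+4\delta},
\]
and analogous bounds for the components of $\nabla\w^N$: each derivative of $\w^N_i$ still vanishes on at least one of the two planes and so picks up at least one small coordinate factor. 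Combining with $|\nabla u(\w)|\leq CM^{\epsilon-b}$ and $|\nabla^2 u(\w)|\leq CM^{\epsilon-b+1}$ (again from Lemma \ref{vaprox}), one obtains $L^\infty$ and $C^1$ bounds of order $M^{-1+2\epsilon+O(\delta)}$ and $M^{2\epsilon+O(\delta)}$ respectively on $\text{supp}(\w)$; off $\text{supp}(\w)$ the contribution is negligible by Lemma \ref{decay}. Interpolation then delivers the advertised $C^\alpha$ bound.

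The main obstacle is this second estimate: one must carefully track, for each of the nine products $\w^N_j\,\p_{x_k}u_i(\w)$, which of the small coordinate factors $|x_1|,|x_2|,|x_3|$ is actually available, and check that the worst case -- which occurs for $b$ near $0$, when $|x_2|$ is comparatively large -- still satisfies the bound $CM^{2\epsilon+\alpha-1/2}$ uniformly in $b\in[0,\tfrac12]$.
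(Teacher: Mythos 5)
Your decomposition and main ideas coincide with the paper's own proof: split $u(\w)$ into $u(\w_{1,b},0,0)+u(0,0,\w_{3,b})$, dispose of the $\w_{3,b}$ part using \eqref{w3i}, obtain \eqref{smallbig1} from the crude bound $\|u(\w_{1,b},0,0)\|_{C^{i}}\le CM^{\epsilon-b-1+i}$ coming from Lemma \ref{vaprox}, and for \eqref{smallbig2} exploit that oddness of $\w^{N}$ makes each $\w^{N}_{i}$ vanish on two coordinate planes, use Lemma \ref{decay} away from the thin slab, and interpolate. So the route is essentially the paper's.

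There is, however, one step that fails as written. For \eqref{smallbig2} you use the pointwise bounds $|\w^{N}_{1}|\lesssim N^{4}M^{-1-\epsilon+b+4\delta}$, etc., which rely on all three side lengths of the box, in particular on $|x_{3}|\lesssim M^{-\frac12}$, and you then claim that everything outside $\text{supp}(\w)$ is negligible ``by Lemma \ref{decay}''. But Lemma \ref{decay} only gives decay of $D^{i}u_{j}(\w_{1,b})$ when $|x_{1}|\ge 4M^{-\frac12-b+\epsilon}$ or $|x_{2}|\ge 4M^{-\frac12-\epsilon+b}$; it says nothing about the $x_{3}$ direction. Hence the region $\{|x_{1}|\lesssim M^{-\frac12-b+\epsilon},\ |x_{2}|\lesssim M^{-\frac12-\epsilon+b},\ M^{-\frac12}\ll |x_{3}|\le 1\}$, which meets $\text{supp}(\w^{N})$ and where $\nabla u(\w_{1,b})$ is not known (from the quoted lemmas) to be small, is covered neither by your on-support estimate nor by your appeal to the decay lemma. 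The repair is exactly what the paper does: work on the whole slab, bounding $|x_{3}|\le 1$ and gaining only the factors $|x_{1}|\lesssim M^{-\frac12+\epsilon}$ or $|x_{2}|\lesssim M^{-\frac12-\epsilon+b}$ from the oddness of $\w^{N}$ (e.g. $|\w^{N}_{1}|\le N^{4}|x_{2}x_{3}|\lesssim N^{4}M^{-\frac12-\epsilon+b}$); combined with $|\nabla u(\w_{1,b})|\lesssim M^{\epsilon-b}$ and $|\nabla D^{1}u(\w_{1,b})|\lesssim M^{\epsilon-b+1}$ this still gives $CM^{2\epsilon-\frac12+i}$ for $i=0,1$, uniformly in $b\in[0,\frac12]$. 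Otherwise you would need to prove an additional decay estimate in the $x_{3}$ direction, which is not contained in Lemma \ref{decay}. With this correction your argument matches the paper's.
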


\begin{proof}
    We start by noting that, using \eqref{w3i} and the bounds for $\w^{N}$, we directly have, for $N$ big enough so that $N^{4}\leq M^{\epsilon}$, and after taking $\delta=\epsilon$, for $i=0,1$
    \begin{equation}\label{3buw}
        ||u(0,0,\w_{3,b})\cdot\nabla\w^{N}||_{C^{i}}\leq C_{\delta} N^{4}M^{-\frac{3}{2}+i+\delta}\leq CM^{-\frac{3}{2}+i+2\epsilon},
    \end{equation}
    \begin{equation}\label{3bwu}
        ||\w^{N}\cdot\nabla u(0,0,\w_{3,b})||_{C^{i}}\leq C_{\delta} N^{4}M^{-\frac{1}{2}+i+\delta}\leq CM^{-\frac{1}{2}+i+2\epsilon}.
    \end{equation}

    Furthermore, since, for $i=0,1,$
    \begin{equation}\label{uw1b}
        ||u_{j}(\w_{1,b},0,0)||_{C^{i}}\leq ||(\tilde{u}_{j}-u_{j})(\w_{1,b},0,0)||_{C^{i}}+||\tilde{u}_{j}(\w_{1,b},0,0)||_{C^{i}}\leq CM^{\epsilon-b-1+i}
    \end{equation}
    we can get
    $$||u(\w_{1,b},0,0)\cdot\nabla\w^{N}||_{C^{i}}\leq C_{\delta} N^{4}M^{\epsilon-1+i}\leq C_{\delta}M^{2\epsilon-1+i},$$
    and combining this with \eqref{3buw} plus interpolation already yields \eqref{smallbig1}.
    
    On the other hand, using \eqref{uw1b}, we have
    \begin{equation}\label{D1wu}
        |D^{1}[\w^{N}(x)\cdot\nabla u(\w_{1,b},0,0)(x)]|\leq CM^{2\epsilon}+|\w^{N}(x)\cdot\nabla D^{1}[u(\w_{1,b},0,0)(x)]
    \end{equation}
    and, since for $x$ such that $|x_{1}|\geq 4M^{-\frac{1}{2}+\epsilon}$ or $|x_{2}|\geq 4M^{-\frac{1}{2}-\epsilon+b}$, we can apply \eqref{decay1} or \eqref{decay2} from lemma \ref{decay} to obtain, for $i=0,1$ that
    $$|\w^{N}(x)\cdot\nabla u(\w_{1,b},0,0)(x)|)\leq C_{k,\delta}M^{-k},|\w^{N}(x)\cdot\nabla D^{1}[u(\w_{1,b},0,0)(x)]|)\leq C_{k,\delta}M^{-k},$$
    we can combine this with \eqref{D1wu}, for $|x_{1}|\geq 4M^{-\frac{1}{2}+\epsilon}$ or $|x_{2}|\geq 4M^{-\frac{1}{2}-\epsilon+b}$ we get
    $$|\w^{N}(x)\cdot\nabla u(\w_{1,b},0,0)(x)|)\leq C_{k,\delta}M^{-k},|D^{1}[\w^{N}(x)\cdot\nabla u(\w_{1,b},0,0)(x)]|)\leq C_{k,\delta}M^{-k}+CM^{2\epsilon}.$$
But, for $|x_{1}|\leq 4M^{-\frac{1}{2}+\epsilon}\cap|x_{2}|\leq 4M^{-\frac{1}{2}-\epsilon+b}\cap |x_{3}|\leq 1$ we can use the fact that $\w^{N}$ is odd and $\w^{N}\in C^{2.5}$ to get that
$$|\w^{N}_{1}(x)|\leq N^{4}x_{2}x_{3}\leq N^{4}M^{-\frac{1}{2}+\epsilon},|\w^{N}_{2}(x)|\leq N^{4}x_{1}x_{3}\leq N^{4}M^{-\frac{1}{2}-\epsilon+b},|\w^{N}_{3}(x)|\leq N^{4}x_{1}x_{2}\leq N^{4}M^{-\frac{1}{2}},$$
so, for $|x_{1}|\leq 4M^{-\frac{1}{2}+\epsilon}\cap|x_{2}|\leq 4M^{-\frac{1}{2}-\epsilon+b}$ 
we have
$$|\w^{N}(x)\cdot\nabla u(\w_{1,b},0,0)(x)|)\leq CN^{4}[M^{-\frac{1}{2}+\epsilon}+M^{-\frac{1}{2}-\epsilon+b}]M^{\epsilon-b}\leq CM^{2\epsilon-\frac{1}{2}},$$
$$|\w^{N}(x)\cdot\nabla D^{1}[u(\w_{1,b},0,0)(x)]|)\leq CN^{4}[M^{-\frac{1}{2}+\epsilon}+M^{-\frac{1}{2}-\epsilon+b}]M^{\epsilon-b+1}\leq CM^{2\epsilon+\frac{1}{2}}$$
which, combined with \eqref{D1wu} gives us, for for $|x_{1}|\leq 4M^{-\frac{1}{2}+\epsilon}\cap|x_{2}|\leq 4M^{-\frac{1}{2}-\epsilon+b}$, for $i=0,1$
$$|D^{i}[\w^{N}(x)\cdot\nabla u(\w_{1,b},0,0)(x)]|\leq CM^{2\epsilon-\frac{1}{2}+i}$$
and since we had already proved it for for $|x_{1}|\geq 4M^{-\frac{1}{2}+\epsilon}$ or $|x_{2}|\geq 4M^{-\frac{1}{2}-\epsilon+b}$, the bound holds for any $x$, and thus, for $i=0,1$
$$||\w^{N}(x)\cdot\nabla u(\w_{1,b},0,0)(x)||_{C^{i}}\leq CM^{2\epsilon-\frac{1}{2}+i}$$
which combined with \eqref{3bwu} and using interpolation gives \eqref{smallbig2}.
\end{proof}

Finally, since we are using a simplified version of the velocity $\bar{u}$ instead of $u$, a force is also required to compensate for this difference, and we need to show that this force is almost $C^{\frac{1}{2}}$ in vorticity formulation.

\begin{lemma}\label{ubarraerror}
    For $N$ big enough and $\alpha\in[0,1]$, we have that
    \begin{equation}
        ||(u^{N}-\bar{u}^{N})\cdot\nabla\w||_{C^{\alpha}}\leq CM^{4\epsilon-\frac{1}{2}+\alpha},||\w\cdot\nabla(u^{N}-\bar{u}^{N})||_{C^{\alpha}}\leq CM^{3\epsilon-\frac{1}{2}+\alpha}
    \end{equation}
\end{lemma}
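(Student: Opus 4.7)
My plan is to Taylor expand $u^N-\bar{u}^N$ in the transverse coordinates $(x_1,x_3)$ about the $x_2$-axis and then use a second layer of parity (now in $x_2$) to sharpen the surviving Taylor coefficients. From Remark~\ref{odd} applied to $u^N$, $u_1^N$ is odd in $x_1$ and even in $x_3$, so $u_1^N(0,x_2,x_3)=\partial_{x_1}^2 u_1^N(0,x_2,x_3)=0$ and $\partial_{x_3}\partial_{x_1}u_1^N(0,x_2,0)=0$, which yields
\[
|u_1^N-\bar{u}_1^N|\leq CN^4\bigl(|x_1|^3+|x_1|x_3^2\bigr),
\]
and an analogous bound (with the roles of $x_1$ and $x_3$ swapped) for $u_3^N-\bar{u}_3^N$. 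For the middle component, using the vanishing of $\partial_{x_1}u_2^N$, $\partial_{x_3}u_2^N$ and $\partial_{x_1}\partial_{x_3}u_2^N$ on the axis by parity,
\[
u_2^N-\bar{u}_2^N=\frac{x_1^2}{2}\partial_{x_1}^2 u_2^N(0,x_2,0)+\frac{x_3^2}{2}\partial_{x_3}^2 u_2^N(0,x_2,0)+O\bigl(N^4|x|^{3.5}\bigr).
\]
Analogous expansions with one fewer transverse power will control $\nabla(u^N-\bar{u}^N)$.

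The critical sharpening is that the coefficient functions $x_2\mapsto\partial_{x_1}^2 u_2^N(0,x_2,0)$ and $x_2\mapsto\partial_{x_3}^2 u_2^N(0,x_2,0)$ are themselves \emph{odd} in $x_2$ (since $u_2^N$ is odd in $x_2$), so they vanish at $x_2=0$, giving $|\partial_{x_i}^2 u_2^N(0,x_2,0)|\leq CN^4|x_2|$ for $i=1,3$. Hence $|u_2^N-\bar{u}_2^N|\leq CN^4|x_2|(x_1^2+x_3^2)+O(N^4|x|^{3.5})$. This extra factor of $|x_2|$ is exactly what the estimate will need.

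On $\mathrm{supp}\,\omega$ we have $|x_1|\leq 2M^{-1/2-b+\epsilon}$, $|x_2|\leq 2M^{-1/2-\epsilon+b}$ and $|x_3|\leq 2M^{-1/2}$. Combining the pointwise Taylor bounds above with $N^4\leq M^\epsilon$ (valid for $N$ large, since $M\geq e^{cN^{\epsilon/2}}$ by \eqref{MN21}--\eqref{MN22}), and with the derivative bounds for $\omega$ implied by \eqref{w1b} (and the analogous representation for $\omega_{3,b}$), a term-by-term estimate on $\mathrm{supp}\,\omega$ yields the $L^\infty$ bounds $CM^{-1/2+4\epsilon}$ and $CM^{-1/2+3\epsilon}$ respectively, with the dominant contributions coming from $(u_3^N-\bar{u}_3^N)\partial_{x_3}\omega_1$ in the first expression and from $\omega_1\partial_{x_1}(u_1^N-\bar{u}_1^N)$ in the second. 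Differentiating once more via Leibniz and repeating the same bookkeeping raises each estimate by exactly one factor of $M$, giving $C^1$ bounds of $CM^{1/2+4\epsilon}$ and $CM^{1/2+3\epsilon}$. Interpolating via $\|f\|_{C^\alpha}\leq C\|f\|_{L^\infty}^{1-\alpha}\|f\|_{C^1}^{\alpha}$ produces the stated $C^\alpha$ inequalities.

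The main obstacle is the $x_2$-parity sharpening described in the second paragraph: without it, the bound $|u_2^N-\bar{u}_2^N|\leq CN^4(x_1^2+x_3^2)$ is dominated by $CN^4 M^{-1}$ on $\mathrm{supp}\,\omega$ for $b$ close to $0$, and its product with $\partial_{x_2}\omega_1$ would fall short of $M^{-1/2+4\epsilon}$ by a factor of $M^{1/2}$. The oddness of $u_2^N$ in $x_2$ supplies exactly the missing factor of $|x_2|\leq 2M^{-1/2-\epsilon+b}$, and this is the cancellation that makes the bound hold uniformly for all $b\in[0,1/2]$.
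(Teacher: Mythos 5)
Your argument is correct and takes essentially the same route as the paper's proof: there the estimate also rests on the parity-based pointwise bound $|u^{N}_{i}-\bar{u}^{N}_{i}|\leq CN^{4}|x_{i}|(x_{1}^{2}+x_{3}^{2})$ (which for $i=2$ is exactly your $x_{2}$-oddness sharpening), combined with the support and derivative bounds for $\w_{b}$ on $\mathrm{supp}\,\w$, the corresponding bounds for $\nabla(u^{N}-\bar{u}^{N})$, and interpolation between the $L^{\infty}$ and $C^{1}$ estimates. One cosmetic slip that does not affect the bounds: in the stretching term the borderline contribution is $\w_{1}\p_{x_{1}}(u^{N}_{2}-\bar{u}^{N}_{2})$, of size $\lesssim N^{4}M^{\epsilon-b}|x_{1}|\lesssim M^{3\epsilon-\frac12}$ at $b=0$, rather than $\w_{1}\p_{x_{1}}(u^{N}_{1}-\bar{u}^{N}_{1})$, which is only $O(N^{4}M^{\epsilon-b}(x_{1}^{2}+x_{3}^{2}))=O(M^{4\epsilon-1})$.
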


\begin{proof}
    We start by noting that, using the fact that $u^{N}(x,t)$ is odd and $||u^{N}(x,t)||_{C^{3.5}}\leq N^{4}$, we have that, for $i=1,2,3$, defining $z=|x_{1}^2+x_{3}^2|^{\frac{1}{2}}$
    $$|u^{N}_{i}(x,t)-\bar{u}^{N}_{i}(x,t)|\leq C N^4x_{i}|x_{1}^2+x_{3}^2|=CN^{4}x_{i}z^2.$$
    But, using that, for $N$ big,
    $$\text{supp}(\w_{b})\subset[-2M^{-\frac{1}{2}-b+\epsilon},2M^{-\frac{1}{2}-b+\epsilon}]\times[-2M^{-\frac{1}{2}-\epsilon+b},2M^{-\frac{1}{2}-\epsilon+b}]\times[2M^{-\frac12},2M^{-\frac12}],$$
    we have that
    $$|(u^{N}_{i}(x,t)-\bar{u}^{N}_{i}(x,t))\p_{x_{i}}\w_{j,b}(x)|\leq C N^{4}|z^2x_{i}\p_{x_{i}}\w_{j,b}(x)|\leq CN^{4}M^{-1+2\epsilon}|x_{i}\p_{x_{i}}\w_{j,b}|.$$

    But, using the properties of $\w_{j,b}$, it is easy to check that
    $|x_{i}\p_{x_{i}}\w_{j,b}(x)|\leq C M^{\epsilon-b} M^{\frac{1}{2}}$ so, taking $N$ big enough so that $N^{4}\leq M^{\epsilon}$
    $$|(u^{N}_{i}(x,t)-\bar{u}^{N}_{i}(x,t))\p_{x_{i}}\w_{j,b}(x)|\leq C N^{4}M^{-\frac{1}{2}+3\epsilon-b}\leq CM^{-\frac{1}{2}+4\epsilon}.$$
    A similar computation gives us
    $$|(u^{N}_{i}(x,t)-\bar{u}^{N}_{i}(x,t))D^{1}[\p_{x_{i}}\w_{j,b}(x)]|\leq CM^{\frac12+4\epsilon},$$
    which combined with
    $$|D^{1}[u^{N}_{i}(x,t)-\bar{u}^{N}_{i}(x,t)]\p_{x_{i}}\w_{j,b}(x)|\leq CN^{4}[z|x_{i}\p_{x_{i}}\w_{j,b}(x)|+z^2|\p_{x_{i}}\w_{j,b}(x)|]\leq CM^{4\epsilon}$$
    and using interpolation gives us
    $$||(u^{N}-\bar{u}^{N})\cdot\nabla\w||_{C^{\alpha}}\leq CM^{4\epsilon-\frac{1}{2}+\alpha}.$$
    On the other hand, we have
    $$|\w_{i,b}\p_{x_{i}}(u^{N}_{j}-\bar{u}^{N}_{j})|\leq CN^4|\w_{i,b}(x)|[z|x_{i}|+z^2]|\leq CM^{3\epsilon-\frac{1}{2}},$$
    $$|D^{1}[\w_{i,b}]\p_{x_{i}}(u^{N}_{j}-\bar{u}^{N}_{j})|\leq CN^4|D^1[\w_{i,b}(x)][z|x_{i}|+z^2]|\leq CM^{3\epsilon+\frac{1}{2}},$$
    $$|\w_{i,b}D^{1}[\p_{x_{i}}(u^{N}_{j}-\bar{u}^{N}_{j})]|\leq CN^4[\w_{i,b}(x)]\leq CM^{2\epsilon}$$
    and again interpolation gives us
    $$\|\w_{i,b}\p_{x_{i}}(u^{N}_{j}-\bar{u}^{N}_{j})||_{C^{\alpha}}\leq CM^{3\epsilon-\frac12+\alpha}.$$
    
\end{proof}

Finally, in order to be able to use induction when gluing different layers together, we want to show that the small scale layer has the desired properties so that it can act as the big scale layer in the next iteration. This includes regularity of $\w$ as well as properties of the velocity field.

\begin{lemma}\label{ultimolemma}
    For $N$ big enough, $t\in[1-M^{-\frac{\epsilon}{2}},1]$, we have that
    \begin{equation}\label{w25u35}
        ||\w(x,t)||_{C^{2.5}},||u(\w)(x,t)||_{C^{3.5}}\leq \frac{M^{4}}{2}.
    \end{equation}
    Furthermore, we have
    \begin{equation}\label{2c0}
        C_{0}M^{\epsilon}\geq \p_{x_{2}}u_{2}(\w(\cdot,t))(x=0)\geq \frac{C_{0}}{4}M^{\epsilon}
    \end{equation}
    and
    \begin{equation}\label{u1log}
        |\p_{x_{1}}u_{1}(\w(\cdot,t))(x=0)|\leq 1.
    \end{equation}
\end{lemma}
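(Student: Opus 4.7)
The plan is the following. For the regularity bounds \eqref{w25u35}, I would read them directly off the explicit formula \eqref{w1b} (and its analogue for $\w_{3,b}$): the oscillatory factors $\sin(Mg_{2,b})$ and $\sin(Mg_{er,3,b}x_3)$ cost one power of $M$ per derivative, so two and a half derivatives produce at most $CM^{\epsilon+2.5}\ll M^4/2$; then the Schauder-type estimate $\|\p u\|_{C^{2.5}}\le C\|\w\|_{C^{2.5}}$ together with a routine $L^{\infty}$ bound transfers this to $\|u(\w)\|_{C^{3.5}}\le M^4/2$. This part is routine calculation.

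For the main bound \eqref{2c0}, I would decompose $u_2(\w)=u_2(\w_{1,b},0,0)+u_2(0,0,\w_{3,b})$. The second summand contributes only $O(M^{-1/2+\delta})$ in $C^1$ by \eqref{w3i}, which is negligible at scale $M^\epsilon$. Lemma \ref{vaprox} then lets me replace $\p_{x_2}u_2(\w_{1,b},0,0)$ by $\p_{x_2}\tilde u_2(\w_{1,b})$ up to an $O(M^{2\epsilon-1/2+\delta})$ error. Differentiating the explicit formula for $\tilde u_2$ and evaluating at $x=0$, every term of the product rule except the one hitting $\sin(Mg_{2,b}(x_2))$ picks up a factor $\sin(Mg_{2,b}(0))=0$ (using $g_{2,b}(0)=0$ by oddness of the flow). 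What survives is
\[
\p_{x_2}\tilde u_2(\w_{1,b})(0)=C_0\,a_b(0)\,M^{\epsilon-b}\,\frac{g_{er,3,b}(0)\,\p_{x_2}g_{2,b}(0)}{g_{er,3,b}(0)^2+\p_{x_2}g_{2,b}(0)^2}.
\]

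The crucial observation is that the window $[1-M^{-\epsilon/2},1]$ is much shorter than the one in Lemma \ref{evolucionsimp}: combining \eqref{MN21}--\eqref{MN22} gives $\ln M\ge PN^{\epsilon/2}$, so $\int_t^1\p_{x_1}\bar u^N_1(0,s)\,ds\le 2PN^\epsilon M^{-\epsilon/2}\to 0$ as $N\to\infty$. Hence $K_N(t)=M^{b(t)}=1+o(1)$ and $b(t)=o(1)$ on this interval. Revisiting the linearised flow ODEs at $x_2=0$, I would show $a_b(0)=1$ exactly (since the trajectory $\phi_2(0,t,\cdot)\equiv 0$ keeps the integrand of \eqref{amplitud1} at the origin), $g_{er,3,b}(0)=\exp\!\int_t^1\p_{x_3}u^N_3(0,s)\,ds=1+o(1)$, and $\p_{x_2}g_{2,b}(0)=M^{-b}(1+o(1))$ (the latter two via incompressibility and the hypothesis $|\p_{x_3}u^N_3(0)|\le\ln(N)^3$). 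Substituting, the displayed quantity tends to $\tfrac{C_0}{2}M^\epsilon$ as $N\to\infty$, so for $N$ large it sits inside $[\tfrac{C_0}{4}M^\epsilon,C_0M^\epsilon]$; the $O(M^{2\epsilon-1/2+\delta})$ error from Lemma \ref{vaprox} is much smaller and does not spoil this.

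For \eqref{u1log}, the Biot--Savart integrand for $u_1$ is $h_2\w_3-h_3\w_2$, independent of $\w_1$; hence $u_1(\w)=u_1(0,0,\w_{3,b})$, and \eqref{w3i} with any $\delta<1/2$ gives $\|u_1(0,0,\w_{3,b})\|_{C^1}\le C_\delta M^{-1/2+\delta}$, well below $1$ for $M$ large. The hardest step is expected to be the computation of $\p_{x_2}\tilde u_2(0)$: it forces one to go beyond the \emph{ranges} for $a_b,g_{er,3,b},\p_{x_2}g_{2,b}$ listed after \eqref{w1b} and extract their explicit $b\to 0$ limits from the linearised ODEs, and to verify that the restriction to the short interval $[1-M^{-\epsilon/2},1]$ (rather than the larger $[1-N^{-\epsilon/2},1]$) is what makes $b=o(1)$.
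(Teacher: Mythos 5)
Your treatment of \eqref{2c0} and \eqref{u1log} is correct, but it follows a genuinely different route from the paper's. For \eqref{2c0} the paper does not work with the representation \eqref{w1b} of the actual solution: it introduces an auxiliary vorticity $\bar{\w}$ evolved by the purely linear velocity $u_{lin}(x,t)=(x_{1}\p_{x_{1}}u^{N}_{1}(0,t),x_{2}\p_{x_{2}}u^{N}_{2}(0,t),x_{3}\p_{x_{3}}u^{N}_{3}(0,t))$, whose solution is fully explicit, applies Lemma \ref{vaprox} to that explicit profile to get $\p_{x_{2}}u_{2}(\bar{\w})(0)\approx\frac{C_{0}}{2}M^{\epsilon}$, and then controls the discrepancy $W=\w-\bar{\w}$ in $L^{\infty}$ (of size $CM^{-\frac{1}{2}+5\epsilon}$, using the shortness of $[1-M^{-\frac{\epsilon}{2}},1]$) together with the log-estimate \eqref{lnu} to transfer the bound to $\w$. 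You instead apply Lemma \ref{vaprox} directly to $\w_{1,b}$ and evaluate $\p_{x_{2}}\tilde{u}_{2}$ at the origin; this requires the exact origin values $a_{b}(0)=1$, $g_{er,3,b}(0)=1+o(1)$, $\p_{x_{2}}g_{2,b}(0)=M^{-b}(1+o(1))$, which are not in the statement of Lemma \ref{evolucionsimp} but do follow from its proof (the $x_{2}$-trajectory through the origin is fixed, so the error terms $p_{N,i}$ vanish there), exactly as you argue. Your observation that $\ln M\geq PN^{\frac{\epsilon}{2}}$ forces $b(t)=o(1)$ on $[1-M^{-\frac{\epsilon}{2}},1]$ is indeed the reason the constant $\frac{C_{0}}{2}$ survives, and it plays the same role that the short interval plays in the paper's evaluation of $\bar{\w}$ and its $W$-estimate. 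Your argument for \eqref{u1log} ($u_{1}(\w)=u_{1}(0,0,\w_{3,b})$ plus \eqref{w3i}) is a harmless variant of the paper's, which uses \eqref{lnu} instead. Your route avoids introducing $\bar{\w}$ and the $W$-equation; the paper's avoids reopening the proof of Lemma \ref{evolucionsimp} for origin values.

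The weak point is \eqref{w25u35}. You propose to read $\|\w\|_{C^{2.5}}\leq CM^{2.5+\epsilon}$ directly off \eqref{w1b}, but the information recorded there (and in Lemma \ref{evolucionsimp}) controls $a_{b}$, $g_{er,1,b}$, $g_{2,b}$, $g_{er,3,b}$ only up to their second $x_{2}$-derivatives; a $C^{2.5}$ bound on $\w_{1,b}$ needs the $C^{\frac{1}{2}}$ seminorm of those second derivatives, which has not been established, so this step is not the routine calculation you claim. The paper avoids the issue by not using the representation for \eqref{w25u35} at all: it passes to Lagrangian coordinates for the cut-off velocity $v=\bar{u}^{N}G(|x|)$ with $\|v\|_{C^{3.5}}\leq CN^{4}$, shows $\|D^{1}\phi\|_{C^{2}}\leq 2$ because $CN^{12}M^{-\frac{\epsilon}{2}}\leq 1$ --- this is precisely where the restriction to $[1-M^{-\frac{\epsilon}{2}},1]$ enters for \eqref{w25u35} --- and propagates $\|\w(\cdot,1)\|_{C^{2.5}}\leq CM^{2.5+\epsilon}$ by a Gronwall estimate, after which the singular-integral bound and the compact support give the $C^{3.5}$ bound for $u(\w)$. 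Your sketch becomes correct if you either run this flow-map argument or extend Lemma \ref{evolucionsimp} by half a derivative in $x_{2}$; as written, that part rests on estimates that are not available.
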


\begin{proof}

First, we note that, since for $t\in[1-M^{-\frac{\epsilon}{2}},1]$, we have $\text{supp}(\w(x,t))\subset B_{1}(0)$, the evolution of $\w(x,t)$ is equivalent to 
$$\p_{t}\omega+v\cdot\nabla \w=\w\cdot\nabla v.$$
with $v(x,t)=\bar{u}^{N}(x,t)G(|x|)$, with $G(|x|)=1$ if $|x|\leq 1$, $G(|x|)=0$ if $|x|\geq 2$ and $G\in C^{\infty}$. Note that in particular $||v(x,t)||_{C^{3.5}}\leq CN^{4}$.

Now, if we define
    $$\tilde{\w}_{i}(x,t)=\w(\phi(x,1,t),t)$$
    with
    $$\p_{t}\phi(x,1,t)=v(\phi(x,1,t),t)$$
    $$\phi(x,1,1)=x,$$
    we get
    \begin{equation*}
        \p_{t}\tilde{\w}_{i}(x,t)=\tilde{\w}_{i}(x,t)(\p_{x_{i}}v_{i})(\phi(x,1,t),t)
    \end{equation*}
    $$\tilde{\w}_{i}(x,t=1)=\w_{i,end}(x).$$
    
    But 
    \begin{align*}
        &\p_{t}||\tilde{\w}_{i}(x,t)||_{C^{2.5}}\leq ||\tilde{\w}_{i}(x,t)(\p_{x_{i}}v_{i})(\phi(x,1,t),t)||_{C^{2.5}}\leq C ||\tilde{\w}_{i}(x,t)||_{C^{2.5}}||(\p_{x_{i}}v_{i})(\phi(x,1,t),t)||_{C^{2.5}}\\
        &\leq C ||\tilde{\w}_{i}(x,t)||_{C^{2.5}} ||v(x,t)||_{C^{3.5}}[1+||\p_{x_{1}}\phi(x,1,t)||_{C^{1.5}}+||\p_{x_{2}}\phi(x,1,t)||_{C^{1.5}}]^{3}\\
        &\leq  C N^{4}||\tilde{\w}_{i}(x,t)||_{C^{2.5}} [1+||\p_{x_{1}}\phi(x,1,t)||_{C^{1.5}}+||\p_{x_{2}}\phi(x,1,t)||_{C^{1.5}}]^{3}
    \end{align*}
    On the other hand, we have
    $$\p_{t}D^1\phi(x,1,t)=D^1(v)(\phi(x,1,t),t)D^{1}(\phi)(x,1,t),$$
    so
    $$\p_{t}||D^{1}\phi(x,1,t)||_{L^{\infty}}\leq ||D^{1}v||_{L^{\infty}}||D^{1}\phi||_{L^{\infty}}\leq CN^{4}||\phi(x,1,t)||_{L^{\infty}}$$
    and similarly
    \begin{align*}
        &\p_{t}||D^3\phi(x,1,t)||_{L^{\infty}}=(||D_{1}\phi(x,1,t)||_{L^{\infty}}+||D^{3}\phi(x,1,t)||_{L^{\infty}})(1+||v||_{C^{3}})^3\\
        &\leq CN^{12}(||D_{1}\phi(x,1,t)||_{L^{\infty}}+||D^{3}\phi(x,1,t)||_{L^{\infty}})
    \end{align*}
    and integrating in time and taking $N$ big enough we get, for $t\in[1-M^{-\frac{\epsilon}{2}},1]$
    $$||D^{1}\phi(x,1,t)||_{C^2}\leq 1+CN^{12}M^{-\frac{\epsilon}{2}}\leq 2,$$
    Now integrating in time $\p_{t}||\tilde{w}(x,t)||_{C^{2.5}}$, for $N$ big and $t\in[1-M^{\frac{\epsilon}{2}}]$ we get
    $$||\tilde{\w}(x,t)||_{C^{2.5}}\leq 2||\tilde{\w}(x,1)||_{C^{2.5}}\leq CM^{2.5+\epsilon}\leq  M^{3}.$$
    But then, since

    $$||\w(x,t)||_{C^{2.5}}=||\tilde{\w}(\phi(x,t,1),t)||_{C^{2.5}}\leq C ||\tilde{\w}(x,t)||_{C^{2.5}}(1+||D^1\phi(x,t,1)||_{C^{2}})^3$$
    and using that we can obtain the same kind of bounds for $||D^{1}\phi(x,t,1)||_{C^2}$ as the ones we got for $||D^{1}\phi(x,1,t)||_{C^{2}}$, we have that
    $$||\w(x,t)||_{C^{2.5}}\leq CM^{3}\leq \frac{M^{4}}{2}.$$
    Furthermore, since $\p_{x_{i}}u_{j}$ is a singular integral operator, this implies
     $$||D^1 u(\w(x,t))||_{C^{2.5}}\leq C||\w(x,t)||_{C^{2.5}}\leq CM^{3}\leq \frac{M^{4}}{4},$$
     and since $\text{supp}(\w(x,t))\subset B_{1}(0)$ we also have
    $$||u(\w(x,t))||_{L^{\infty}}\leq C||\w(x,t)||_{C^{2.5}}\leq CM^{3}\leq \frac{M^{4}}{4},$$
    so we have \eqref{w25u35}.
    To prove \eqref{2c0}, we define $\bar{\w}(x,t)$ as

    $$\p_{t}\bar{\w}+u_{lin}\cdot\nabla \bar{\w}=\bar{\w}\cdot\nabla u_{lin},$$
    with
    $$u_{lin}(x,t)=(x_{1}\p_{x_{1}}u^{N}_{1}(x=0,t),x_{2}\p_{x_{2}}u^{N}_{2}(x=0,t)x_{3}\p_{x_{3}}u^{N}_{3}(x=0,t)),$$
    which fulfils, if we define $A_{i}(t)=\int_{t}^{1}\p_{x_{i}}u^{N}_{i}(x=0,s)ds$
    $$w_{1}(x,t)=\frac{M^{\epsilon}}{A_{1}(t)}f(M^{\frac{1}{2}-\epsilon}A_{1}(t)x_{1})\sin(MA_{2}(t)x_{2})f(M^{\frac{1}{2}+\epsilon}A_{2}(t)x_{2}) \sin(MA_{3}(t)x_{3})f(M^{\frac{1}{2}}A_{3}(t)x_{3}).$$

We can then apply lemma \ref{vaprox} to get that

$$||(u_{2}-\tilde{u}_{2})(\bar{\w}_{1}(x,t))||_{C^1}\leq C_{\delta}M^{2\epsilon-\frac{1}{2}+\delta}$$
and using the expression of $\tilde{u}(\bar{\w}_{1}(x,t))$ plus the bounds for $u^{N}(x,t)$ we get, for any $\delta>0$, if $N$ big enough
$$\frac{1-\delta}{2}C_{0}M^{\epsilon}\leq \p_{x_{2}}\tilde{u}_{2}(\bar{\w}_{1}(x,t))\leq \frac{1+\delta}{2}C_{0}M^{\epsilon}.$$
Then, taking $N$ big enough we get, again for any $\delta>0$

    $$\frac{1-2\delta}{2}C_{0}M^{\epsilon}\leq \p_{x_{2}}u_{2}(\bar{\w}_{1}(x,t))\leq C_{0}M^{\epsilon}\frac{1+2\delta}{2}.$$

Furthermore, using that $\p_{x_{i}}u_{j}$ as an operator is a singular integral operator, we can get

$$||\p_{x_{i}}u_{j}(0,0,\bar{\w}_{3}(x,t))||_{L^{\infty}}\leq CM^{\epsilon-\frac{1}{2}}$$
so that
\begin{equation}\label{3delta}
\frac{1-3\delta}{2}C_{0}M^{\epsilon}\leq \p_{x_{2}}u_{2}(\bar{\w}(x,t))\leq C_{0}M^{\epsilon}\frac{1+3\delta}{2}.
\end{equation}

To finish the proof of \eqref{2c0}, we just need to obtain bounds for

$$||\p_{x_{2}}u_{2}(\w(x,t)-\bar{\w}(x,t))||_{L^{\infty}}.$$

For this, if we define $W=\w(x,t)-\bar{\w}(x,t)$, we have that

$$\p_{t}W+(v-u_{lin})\cdot\nabla \w+u_{lin}\cdot \nabla W=W\cdot\nabla v+\bar{\w}\cdot\nabla (v-u_{lin})$$
and using the bounds for the support of $\w,\bar{\w}$, the definitions of $v$ and $u_{lin}$, the bounds for $u^{N}$ and that $u^{N}$ is an odd velocity, we get
$$||(v-u_{lin})\cdot\nabla \w||_{L^{\infty}},||\bar{\w}\cdot\nabla (v-u_{lin})||_{L^{\infty}}\leq CN^{4} M^{-\frac{1}{2}+4\epsilon}\leq CM^{-\frac{1}{2}+5\epsilon}.$$
Then, the evolution equation for $W$ gives us
$$||W||_{L^{\infty}}\leq CM^{-\frac{1}{2}+5\epsilon}.$$

Furthermore, we have that $||\w||_{C^{1}},||\bar{\w}||_{C^{1}}\leq CM^{1+\epsilon}$ so $||W||_{C^{1}}\leq CM^{1+\epsilon}$
and finally, using the properties of the operator $\p_{x_{2}}u_{2}$ we get
$$||\p_{x_{2}}u_{2}(W)||_{L^{\infty}}\leq C||W||_{L^{\infty}}(\ln(1+||W||_{C^{1}}))\leq CM^{-\frac{1}{2}+6\epsilon},$$
which combined with \eqref{3delta} gives us the bound for $\p_{x_{2}}u_{2}(\w)$.

To obtain \eqref{u1log}, we use the properties of $\p_{x_{1}}u_{1}$ to get

$$|\p_{x_{1}}u_{1}(\w)|=|\p_{x_{1}}u_{1}(0,0,\w_{3}(x,t))|\leq C||\w_{3}||_{L^{\infty}}(1+\ln(||\w||_{C^{1}}))\leq 1.$$

\end{proof}
\section{Constructing the solution}
Now, combining all the lemmas we have proved so far, we can get the theorem that will allow us to construct our solution iteratively.

\begin{theorem}\label{glue}
    Given $\frac{1}{100}\geq \epsilon>0$, for $N>1$ big enough, given an odd vorticity $\bar{\w}^{1}(x,t)$ satisfying the forced 3D-Euler equations  with force $\bar{F}^{1}(x,t)$ such that $||\bar{\w}^1||_{C^{2.5}},||u(\bar{\w}^1)||_{C^{3.5}}\leq N^4$, $\bar{F}^1\in C^{\frac{1}{2}-6\epsilon},$ and fulfilling, for $t\in[1-N^{-\frac{\epsilon}{2}},1]$
    $$C_{0}N^{\epsilon}\geq \p_{x_{1}}u_{1}(\bar{\w}^1)\geq \frac{C_{0}}{4}N^{\epsilon}, ||\p_{x_{3}}u_{3}(\bar{\w}^1)||_{L^{\infty}}\leq \ln(N)^{3},$$
    with $C_{0}$ the constant from lemma \ref{vaprox}, then, if we define
    $$M^{\frac{1}{2}}:=e^{\int_{1-N^{-\frac{\epsilon}{2}}}^{1}\p_{x_{1}}u_{1}(\bar{\w}^1)(s,0)ds}$$
    there is an odd vorticity $\bar{\w}^2(x,t)$ such that $\bar{\w}^1(x,t)+\bar{\w}^2(x,t)$ is a solution to the forced 3D-Euler with force $\bar{F}^1(x,t)+\bar{F}^2(x,t)$ fulfilling 
    \begin{equation}\label{cotas1}
        ||\bar{\w}^1(x,t)+\bar{\w}^2(x,t)||_{C^{2.5}},||u(\bar{\w}^1(x,t)+\bar{\w}^2(x,t))||_{C^{3.5}}\leq M^{4},||\bar{F}^2(x,t)||_{C^{\frac{1}{2}-6\epsilon}}\leq M^{-2\epsilon}
    \end{equation}
    and, for $t\in[1-M^{-\frac{\epsilon}{2}},1]$
    \begin{equation}\label{cotas2}
        C_{0}M^{\epsilon}\geq \p_{x_{2}}u_{2}(\bar{\w}^1+\bar{\w}^2)\geq \frac{C_{0}}{4}M^{\epsilon}, ||\p_{x_{1}}u_{1}(\bar{\w}^1+\bar{\w}^{2})||_{L^{\infty}}\leq \ln(M)^{3}.
    \end{equation}
    Furthermore, for $t\in[0,1-N^{-\frac{\epsilon}{2}}]$, $\bar{F}^2(x,t)=\bar{\w}^{2}(x,t)=0$.
\end{theorem}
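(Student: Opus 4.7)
The plan is to take $u^N := u(\bar{\w}^1)$ and verify that it satisfies the running hypotheses of Lemmas \ref{evolucionsimp}--\ref{evolucionsmall}: the regularity bound $\|u^N\|_{C^{3.5}} \leq N^4$ is given, the growth $\p_{x_1}u^N_1(0,t) \in [\tfrac{C_0}{4}N^{\epsilon}, C_0 N^{\epsilon}]$ is the stated hypothesis, and the bound $|\p_{x_3}u^N_3(0,t)| \leq \ln(N)^3$ is likewise given. The definition of $M$ in the theorem matches \eqref{MN1} exactly. I then define $\bar{\w}^2(x,t)$ to be the solution of \eqref{evolucionsmall} with $u^N = u(\bar{\w}^1)$ on the interval $[1-N^{-\epsilon/2},1]$, and extend it by zero on $[0, 1-N^{-\epsilon/2}]$.

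Next I identify the force $\bar{F}^2$ by subtracting the equation for $\bar{\w}^1$ from the full 3D-Euler equation for $\bar{\w}^1 + \bar{\w}^2$ and using the simplified evolution equation satisfied by $\bar{\w}^2$. This gives
\begin{align*}
    \bar{F}^2 &= (u^N - \bar{u}^N)\cdot\nabla \bar{\w}^2 - \bar{\w}^2\cdot\nabla(u^N - \bar{u}^N)\\
    &\quad + u(\bar{\w}^2)\cdot\nabla\bar{\w}^1 - \bar{\w}^1\cdot\nabla u(\bar{\w}^2) + u(\bar{\w}^2)\cdot\nabla\bar{\w}^2 - \bar{\w}^2\cdot\nabla u(\bar{\w}^2).
\end{align*}
The divergence-free property of $\bar{F}^2$ follows from the observation in Section \ref{forcedeuler} applied to $\bar{\w}^1$ and $\bar{\w}^2$ separately, and the support of $\bar{F}^2$ is contained in the (common) support of $\bar{\w}^1$ and $\bar{\w}^2$, hence in $B_1(0)$.

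I then control the $C^{1/2-6\epsilon}$ norm of each of the three pairs with $\alpha = \tfrac12 - 6\epsilon$: Lemma \ref{ubarraerror} gives $CM^{4\epsilon - 6\epsilon} = CM^{-2\epsilon}$ for the first pair, Lemma \ref{upequeñoagrande} applied with $\w^N = \bar{\w}^1$ (whose $C^{2.5}$ norm is $\leq N^4$, which is $\leq M^\epsilon$ for $N$ large since $M$ grows exponentially in $N^{\epsilon/2}$) gives $CM^{2\epsilon - 6\epsilon} = CM^{-4\epsilon}$ for the second pair, and Lemma \ref{cuaderror} gives $CM^{3\epsilon - 6\epsilon} = CM^{-3\epsilon}$ for the last pair. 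Summing and taking $N$ large yields $\|\bar{F}^2\|_{C^{1/2-6\epsilon}} \leq M^{-2\epsilon}$.

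Finally, I invoke Lemma \ref{ultimolemma} to obtain $\|\bar{\w}^2\|_{C^{2.5}} \leq M^4/2$ and $\|u(\bar{\w}^2)\|_{C^{3.5}} \leq M^4/2$; the bounds $\|\bar{\w}^1\|_{C^{2.5}},\|u(\bar{\w}^1)\|_{C^{3.5}} \leq N^4 \ll M^4$ give \eqref{cotas1}. For \eqref{cotas2}, the same lemma yields $\p_{x_2}u_2(\bar{\w}^2)(0,t) \in [\tfrac{C_0}{4}M^\epsilon + \delta, C_0 M^\epsilon - \delta]$ on $[1-M^{-\epsilon/2},1]$, and the contribution of $\bar{\w}^1$ is at most $C\|\bar{\w}^1\|_{L^\infty}\ln(10 + \|\bar{\w}^1\|_{C^1}) \leq CN^4\ln(N) \ll M^\epsilon$ by \eqref{lnu}; similarly $|\p_{x_1}u_1(\bar{\w}^2)(0,t)| \leq 1$ combines with $|\p_{x_1}u_1(\bar{\w}^1)(0,t)| \leq C_0 N^\epsilon \leq \ln(M)^3$ (using $\ln(M) \gtrsim N^{\epsilon/2}$) to yield the required logarithmic bound. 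The main obstacle is the second step: one must match the precise powers of $M$ produced by Lemmas \ref{cuaderror}, \ref{upequeñoagrande} and \ref{ubarraerror} against $\alpha = \tfrac12 - 6\epsilon$ so that \emph{every} error term has a negative exponent, and simultaneously exploit the fact that $M$ is exponentially larger than $N$ to absorb all ``large-scale'' contamination of the fine quantities $\p_{x_2}u_2(0,t)$ and $\|\w\|_{C^{2.5}}$.
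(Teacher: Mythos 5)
There is a genuine gap, and it is precisely the step your construction skips: the joining in time. The solution $\w(x,t)$ of the simplified equation \eqref{evolucionsimp}--\eqref{evolucionsmall} is prescribed at $t=1$ and solved backwards, and at $t=1-N^{-\frac{\epsilon}{2}}$ it is \emph{not} zero: by \eqref{w1b} (with $b=\tfrac12$, i.e. $K_{N}(t)\approx M^{\frac12}$) it still has size of order $M^{\epsilon-\frac12}$ in $L^{\infty}$ (and $\w_{3}$ of order $M^{-\frac12}$). Hence "extending by zero on $[0,1-N^{-\frac{\epsilon}{2}}]$" produces a jump discontinuity in time, and the pair $(\bar{\w}^{1}+\bar{\w}^{2},\bar{F}^{1}+\bar{F}^{2})$ cannot satisfy the vorticity equation with a force that is uniformly bounded in $C^{\frac12-6\epsilon}$: matching the jump would require a Dirac mass in time. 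Conversely, if you keep $\bar{\w}^{2}=\w$ on the whole interval you lose the conclusion $\bar{\w}^{2}=\bar{F}^{2}=0$ for $t\le 1-N^{-\frac{\epsilon}{2}}$, which is exactly what the iteration in the Corollary needs (and the hypotheses on $u(\bar{\w}^{1})$ are only assumed on $[1-N^{-\frac{\epsilon}{2}},1]$, so the lemmas of Sections 3--4 are not even available earlier). The paper resolves this by setting $\bar{\w}^{2}(x,t)=g(t)\w(x,t)$ with a temporal ramp $g$ going from $0$ to $1$ on a window of length $M^{-\epsilon}$ after $1-N^{-\frac{\epsilon}{2}}$; this adds to your force the extra term $g'(t)\w(x,t)$, which must be (and is) estimated: $g'\le M^{\epsilon}$ while, on the ramp, interpolation of \eqref{w1bw3b} gives $\|\w\|_{C^{\frac12-6\epsilon}}\le CM^{-5\epsilon}$, so this contribution is $\le CM^{-4\epsilon}$. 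Your force decomposition omits this term entirely, so as written the identity for $\bar{F}^{2}$ and the final bound do not cover the actual construction. (A further bookkeeping point: with the cutoff, the quadratic term carries $g(t)^{2}$ and the cross terms $g(t)$, but the estimates you quote are unaffected since $0\le g\le1$.)

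Apart from this, your argument is the paper's: the same identification of the remaining force terms, the same use of Lemmas \ref{ubarraerror}, \ref{upequeñoagrande} and \ref{cuaderror} at $\alpha=\tfrac12-6\epsilon$ giving $CM^{-2\epsilon}$, $CM^{-4\epsilon}$, $CM^{-3\epsilon}$, the observation $N^{4}\le M^{\epsilon}$ because $M$ is exponentially large in $N^{\epsilon/2}$, and Lemma \ref{ultimolemma} together with \eqref{lnu} for \eqref{cotas1} and \eqref{cotas2}. So the proposal would be essentially correct once the cutoff $g(t)$ and the estimate of $g'(t)\w$ are inserted.
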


\begin{proof}
    To obtain $\bar{\w}^2(x,t)$, we consider

    \begin{equation*}
    \p_{t}\omega+\bar{u}(\w^{1})\cdot\nabla \w=\w\cdot\nabla\bar{u}(\w^{1}),
\end{equation*}
with $\bar{u}(\w^{1})$ as in definition \ref{ubarra}, and
$$\w_{1}(x,t=1)=-M^{\epsilon}f(M^{\frac{1}{2}-\epsilon}x_{1})\sin(Mx_{2})f(M^{\frac{1}{2}+\epsilon}x_{2})\sin(Mx_{3})f(M^{\frac{1}{2}}x_{3})$$
$$\w_{2}(x,t=1)=0,$$
$$\w_{3}(x,t=1)=-\int_{-\infty}^{x_{3}}\p_{x_{1}}\w_{1}(x_{1},x_{2},s,t=1)ds,$$
with some fixed $f(z)\in C^{\infty}$, $1\geq f(z)\geq 0$, $f(z)=1$ if $|z|\leq \frac{1}{2}$, $f(z)=0$ if $|z|\geq 1$.
We then take 
$$\bar{\w}^{2}(x,t):= g(t)\w(x,t)$$
with

\begin{equation}
g(t)=
    \begin{cases}
        0&\text{if } t\leq 1-N^{-\frac{\epsilon}{2}}\\
        (t-1+N^{-\frac{\epsilon}{2}})M^{\epsilon}&\text{if }t\in[1-N^{-\frac{\epsilon}{2}},1-N^{-\frac{\epsilon}{2}}+M^{-\epsilon}]\\
        1&\text{if } t\geq 1-N^{-\frac{\epsilon}{2}}+M^{-\epsilon},
    \end{cases}
\end{equation}
so $\bar{\w}^{2}(x,t)=0$ if $t\leq 1-N^{-\frac{\epsilon}{2}}$.
For $\bar{w}^{2}(x,t)$ defined this way, we have that $\bar{\w}^{1}(x,t)+\bar{\w}^{2}(x,t)$ is a solution to the forced 3D-Euler equation with force
$$F(x,t)=\bar{F}^{1}(x,t)+\bar{F}^2(x,t)$$
with
\begin{align*}
    \bar{F}^{2}(x,t)=&g'(t)\w(x,t)+g(t)\big((u-\bar{u})(\bar{\w}^{1}(x,t))\cdot\nabla \w(x,t)-\w(x,t)\cdot\nabla[(u-\bar{u})(\bar{w}^{1})(x,t)]\big)\\
    &+g(t)\big(u(\w)\cdot\nabla \bar{\w}^1(x,t)-\bar{\w}^1(x,t)\cdot\nabla u(\w)(x,t)\big)\\
    &+g(t)^2\big(u(\w)\cdot\nabla \w(x,t)-\w(x,t)\cdot\nabla u(\w)(x,t)\big).
\end{align*}

Then, using lemmas \ref{ubarraerror}, \ref{upequeñoagrande} and \ref{cuaderror} gives us
$$||g(t)\big((u-\bar{u})(\bar{\w}^{1}(x,t))\cdot\nabla \w(x,t)-\w(x,t)\cdot\nabla[(u-\bar{u})(\bar{w}^{1})(x,t)]\big)||_{C^{\frac{1}{2}-6\epsilon}}\leq CM^{-2\epsilon},$$
$$||g(t)\big(u(\w)\cdot\nabla \bar{\w}^1(x,t)-\bar{\w}^1(x,t)\cdot\nabla u(\w)(x,t)\big)||_{C^{\frac{1}{2}-6\epsilon}}\leq CM^{-4\epsilon}$$
$$||g(t)^2\big(u(\w)\cdot\nabla \w(x,t)-\w(x,t)\cdot\nabla u(\w)(x,t)\big)||_{C^{\frac{1}{2}-6\epsilon}}\leq CM^{-3\epsilon}$$
,which in particular gives $\bar{F}^2(x,t)=0$ for $t\in[0,1-N^{-\frac{\epsilon}{2}}]$. Furthermore, using the  bounds for $\w(x,t)$ (see \eqref{w1bw3b}) we have
$$||g'(t)\w(x,t)||_{C^{\frac{1}{2}-6\epsilon}}\leq M^{\epsilon}\text{supp}_{t\in[1-N^{-\frac{\epsilon}{2}},1-N^{-\frac{\epsilon}{2}}+M^{-\epsilon}]}||\w(x,t)||_{C^{\frac{1}{2}-6\epsilon}}\leq CM^{-4\epsilon}.$$
This combined with \eqref{w25u35} gives us \eqref{cotas1}.
Next, using lemma \ref{ultimolemma} plus
$$||\p_{x_{1}}u_{1}(\w^{1})||\leq C_{0}N^{\epsilon}\leq \frac{4}{C_{0}}\ln(M)^2$$
gives us \eqref{cotas2} and finishes the proof.

\end{proof}

\begin{corollary}
    For any $\delta>0$, there exists a solution to the forced 3D-Euler equation in vorticity formulation $\w(x,t)$ 
 with force $F(x,t)$ fulfilling $\w(x,t)\in C^{\frac{1}{2}-\delta}$ for $t\in[0,1)$,
 $$\text{sup}_{t\in[0,1)}||F(x,t)||_{C^{\frac{1}{2}-\delta}}<\infty$$
 and
 $$\text{lim}_{t\rightarrow 1}||\w(x,t)||_{L^{\infty}}=\infty.$$
    
\end{corollary}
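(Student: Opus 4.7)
The plan is to iterate Theorem~\ref{glue} infinitely many times to build a sequence of vorticity layers $\bar{\w}^k$ with activation times $1 - N_{k-1}^{-\epsilon/2}$ accumulating at $t = 1$, and then set $\w := \sum_{k} \bar{\w}^k$ and $F := \sum_k \bar{F}^k$ on $[0,1)$. For the base case I would fix $\epsilon > 0$ with $6\epsilon < \delta$, choose $N_1$ large, and construct by hand a smooth, compactly supported odd vorticity $\bar{\w}^1$ whose Biot--Savart velocity produces a hyperbolic flow near the origin with $\p_{x_1} u_1(\bar{\w}^1)(0,\cdot) \approx \tfrac{C_0}{2} N_1^\epsilon$ and $||\p_{x_3} u_3(\bar{\w}^1)||_{L^\infty} \leq 1$ on $[1 - N_1^{-\epsilon/2},1]$, together with $||\bar{\w}^1||_{C^{2.5}},||u(\bar{\w}^1)||_{C^{3.5}} \leq N_1^4$; the force $\bar{F}^1$ is then defined as the (smooth) residual that makes $\bar{\w}^1$ a vorticity solution.

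Applying Theorem~\ref{glue} to $\bar{\w}^1$ produces $\bar{\w}^2, \bar{F}^2$ and a new constant $N_2$ with $N_2^{1/2} \geq \exp(c N_1^{\epsilon/2})$, satisfying the output bounds \eqref{cotas1}--\eqref{cotas2}. Before the next application I would compose with a suitable rotation $R$ or $R^{-1}$ from Section~2 (a symmetry of both the Euler equation and of the structural hypotheses of Theorem~\ref{glue}) so that the ``growing'' direction (in which $\p_{x_j} u_j \sim N_2^\epsilon$) returns to the index $1$ and the logarithmically-controlled direction returns to index $3$, restoring the inductive hypotheses for $W_2 := \bar{\w}^1 + \bar{\w}^2$. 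Iterating gives $\bar{\w}^k, \bar{F}^k, N_k$ for all $k \geq 1$, with $N_k \to \infty$ super-exponentially.

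Since $\bar{\w}^k$ and $\bar{F}^k$ vanish on $[0, 1 - N_{k-1}^{-\epsilon/2}]$ and these thresholds accumulate at $1$, at any fixed $t < 1$ only finitely many summands are nonzero; hence $\w(\cdot,t)$ is a finite $C^{2.5}$ sum and so lies in $C^{1/2 - \delta}$, and $\w$ solves the forced Euler equation with force $F$ on $[0,1)$. The bound $||\bar{F}^{k+1}||_{C^{1/2 - 6\epsilon}} \leq N_k^{-2\epsilon}$ from \eqref{cotas1}, together with the super-exponential growth of $N_k$, makes $\sum_k N_k^{-2\epsilon}$ converge, so
\begin{equation*}
\sup_{t \in [0,1)} ||F(\cdot,t)||_{C^{1/2 - \delta}} \leq ||\bar{F}^1||_{C^{1/2 - 6\epsilon}} + \sum_{k \geq 1} N_k^{-2\epsilon} < \infty.
\end{equation*}

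For the blow-up I would pick $t_k := 1 - N_k^{-\epsilon/2}/\ln N_{k+1}$, so that $t_k \to 1$. Using the amplitude factor $M^\epsilon / K_N(t)$ extracted from Lemma~\ref{evolucionsimp} (with $M = N_{k+1}$, $N = N_k$), the layer $\bar{\w}^{k+1}$ alone satisfies $||\bar{\w}^{k+1}(\cdot, t_k)||_{L^\infty} \gtrsim N_{k+1}^\epsilon$ at this time, while the earlier layers contribute $\lesssim N_k^\epsilon$ (negligible) and later layers vanish at $t_k$, giving $||\w(\cdot, t_k)||_{L^\infty} \to \infty$. I expect the main technical obstacle to be the induction bookkeeping: verifying that after each rotation the combined vorticity $W_k$ genuinely meets all the hypotheses of Theorem~\ref{glue} with the new constant $N_k$, and in particular that the asymmetric bounds on $\p_{x_j} u_j(0,\cdot)$ line up correctly between the output of one iteration and the input of the next. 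This is precisely what \eqref{cotas1}--\eqref{cotas2} are engineered to encode, so the induction closes.
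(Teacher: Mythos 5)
Your proposal is correct and follows essentially the same route as the paper: an explicit oscillatory base layer (whose residual defines $\bar{F}^1$), repeated application of Theorem~\ref{glue} composed with the rotations $R$, $R^{-1}$ to send the growing direction back to $x_1$, local finiteness of the sum in time since each new layer and force vanish on $[0,1-N_{k-1}^{-\epsilon/2}]$, and summability of $\sum_k N_k^{-2\epsilon}$ thanks to the super-exponential growth $N_{k+1}\geq \exp(cN_k^{\epsilon/2})$. The one genuine divergence is the final blow-up step. The paper stays entirely with the \emph{stated} outputs of Theorem~\ref{glue}: it evaluates at $t_i=1-M_{i-1}^{-\epsilon/2}$, where \eqref{cotas2} gives $|u(\w^\infty(\cdot,t_i))|_{C^1}\geq \frac{C_0}{4}M_{i-1}^{\epsilon}$ while \eqref{cotas1} gives $\|\w^\infty(\cdot,t_i)\|_{C^{2.5}}\leq M_{i-1}^4$, and then the logarithmic estimate \eqref{lnu} forces $\|\w^\infty(\cdot,t_i)\|_{L^\infty}\gtrsim M_{i-1}^{\epsilon}/\ln(10+M_{i-1}^4)\rightarrow\infty$. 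You instead lower-bound $\|\bar\w^{k+1}(\cdot,t_k)\|_{L^\infty}$ directly at $t_k=1-N_k^{-\epsilon/2}/\ln N_{k+1}$ via the amplitude factor $M^{\epsilon}/K_N(t)$ of Lemma~\ref{evolucionsimp}; your computation is sound (at that time $\ln K_{N_k}(t_k)=O(1)$ because $\ln N_{k+1}\sim N_k^{\epsilon/2}$, and later layers are still off while earlier ones contribute only $O(N_k^{\epsilon})$ or at worst $O(N_k^4)\ll N_{k+1}^{\epsilon}$), but note that this information is not contained in the statement of Theorem~\ref{glue} (\eqref{cotas1}--\eqref{cotas2} give no pointwise lower bound on $\bar\w^2$), so to run your argument you must either reopen the proof of the gluing theorem or augment its statement with a lower bound such as $\|\bar\w^2(\cdot,t)\|_{L^\infty}\geq cM^{\epsilon}/K_N(t)$ near $t=1$. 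The paper's version buys a cleaner black-box induction at the price of the extra interpolation inequality \eqref{lnu}; yours buys a more direct, quantitative blow-up of the vorticity amplitude at the price of exposing the internal structure of the glued layer. Both, like the paper, establish divergence only along a sequence $t_k\rightarrow 1$, which is what the corollary is used for.
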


\begin{proof}
    First, given $\epsilon>0$, for some big $N$ we define 
    $\w^{0}(x,t)=(\w^{0}_{1}(x,t),\w^{0}_{2}(x,t),\w^{0}_{3}(x,t))$
with
$$\w^{0}_{1}(x,t)=0$$
$$\w^{0}_{2}(x,t)=-\int_{-\infty}^{x_{2}}\p_{x_{3}}\w^{0}_{3}(x_{1},s,x_{3})ds,$$
$$\w^{0}_{3}(x,t)=-M^{\epsilon}f(M^{\frac{1}{2}-\epsilon}x_{3})\sin(Mx_{1})f(M^{\frac{1}{2}+\epsilon}x_{1})\sin(Mx_{2})f(M^{\frac{1}{2}}x_{2}),$$
($f(z)\in C^{\infty}$, $0\leq f(z)\leq 1$, $f(z)=1$ if $|z|\leq \frac12$, $f(z)=0$ if $|z|\geq 1$), which is a solution to the forced incompressible 3D-Euler equation in vorticity formulation  with some force $F^{0}(x,t)\in C^{\infty}$ and $||F^{0}(x,t)||_{C^{1}}\leq C.$
Furthermore, we can apply lemma \ref{vaprox}  to $R(0,0,\w^{0}_{3})$ show that, for any $a>0$, if $N$ big, 
$$\frac{(1+a)C_{0}N^{\epsilon}}{2}\geq \p_{x_{1}}u_{1}(\w^{0}_{3})\geq \frac{(1-a)C_{0}N^{\epsilon}}{2}$$
which, combined with the bounds for $\w^{0}_{2}$ and \eqref{lnu} gives us, for $N$ big
$$C_{0}N^{\epsilon}\geq \p_{x_{1}}u_{1}(\w^{0})\geq \frac{C_{0}N^{\epsilon}}{4}.$$

With this $\w^{0}(x,t)$, we can apply theorem \ref{glue} with $\bar{\w}^{1}(x,t)=\w^{0}(x,t)$, $\bar{F}^{1}(x,t)=F^{0}(x,t)$ to obtain $\bar{\w}^{2}(x,t)$ such that 
$\w^{0}(x,t)+\bar{\w}^{2}(x,t)$
is a solution to the forced incompressible 3D-Euler equation in vorticity formulation  with force $F^{0}(x,t)+\bar{F}^{2}(x,t)$, fulfilling, if $M_{0}^{\frac{1}{2}}:=e^{\int_{1-N^{-\frac{\epsilon}{2}}}^{1}\p_{x_{1}}u_{1}(\w^0)(s,0)ds}$,
    \begin{equation*}
        ||\w^{0}(x,t)+\bar{\w}^2(x,t)||_{C^{2.5}},||u(\w^{0}(x,t)+\bar{\w}^2(x,t))||_{C^{3.5}}\leq M_{0}^{4},||\bar{F}^2(x,t)||_{C^{\frac{1}{2}-6\epsilon}}\leq M_{0}^{-\epsilon}
    \end{equation*}
and for $t\in[1-M_{0}^{-\frac{\epsilon}{2}},1]$
    \begin{equation*}
        C_{0}M_{0}^{\epsilon}\geq \p_{x_{2}}u_{2}(\w^{0}+\bar{\w}^2)\geq \frac{C_{0}}{4}M_{0}^{\epsilon}, ||\p_{x_{1}}u_{1}(\w^0+\bar{\w}^{2})||_{L^{\infty}}\leq \ln(M_{0})^{3},
    \end{equation*}
and, for  $t\in[0,1-N^{\frac{\epsilon}{2}}]$, $\bar{F}^2(x,t)=\bar{\w}^{2}(x,t)=0$.

We can define then $\w^{1}(x,t)=\w^{0}(x,t)+\bar{\w}^{2}(x,t)$ and iterate the process. More precisely, we apply theorem \ref{glue} with $\bar{\w}^1(x,t)=R^{-i}(\w^{i}(x,t))$, which satisfies the hypothesis of the theorem, to obtain a new $\bar{\w}^{2}(x,t)$ such that $R^{-i}(\w^{i} (x,t))+\bar{\w}^{2}(x,t)$ is a solution to the forced incompressible 3D-Euler equation in vorticity formulation  with force $R^{-i}(F^{i}(x,t))+\bar{F}^{2}(x,t)$, and, if we define $M_{i}^{\frac{1}{2}}:=e^{\int_{1-M_{i-1}^{-\frac{\epsilon}{2}}}^{1}\p_{x_{1}}u_{1}(\w^i)(s,0)ds}$, we have
    \begin{equation*}
        ||R^{-i}(\w^{i}(x,t))+\bar{\w}^2(x,t)||_{C^{2.5}},||u(R^{-i}(\w^{i}(x,t))+\bar{\w}^2(x,t))||_{C^{3.5}}\leq M_{i}^{4},||\bar{F}^2(x,t)||_{C^{\frac{1}{2}-6\epsilon}}\leq M_{i}^{-\epsilon}
    \end{equation*}
and for $t\in[1-M_{i}^{-\frac{\epsilon}{2}},1]$
    \begin{equation*}
        C_{0}M_{i}^{\epsilon}\geq \p_{x_{3}}u_{3}(R^{-i}(\w^{i}(x,t))+\bar{\w}^2)\geq \frac{C_{0}}{4}M_{i}^{\epsilon}, ||\p_{x_{1}}u_{1}(R^{-i}(\w^{i}(x,t))+\bar{\w}^{2})||_{L^{\infty}}\leq \ln(M_{i})^{3},
    \end{equation*}
and $\bar{\w}^2(x,t))=0$ if $t\in[0,1-M_{i-1}^{-\frac{\epsilon}{2}}]$.

Then, we define $\w^{i+1}(x,t)=\w^{i}(x,t)+R^{i}(\bar{\w}^{2}(x,t))$, which is a solution to the forced incompressible 3D-Euler equation in vorticity formulation with force $F^{i+1}(x,t)=F^{i}(x,t)+R^{i}(\bar{F}^{2}(x,t))$.
Finally, the solution that blows up will be
$$\w^{\infty}(x,t)=\text{lim}_{i\rightarrow\infty}\w^{i}(x,t).$$

We note that the limit trivially exists for all $t\in[0,1)$, since, for $t\in[0,1-M_{j}^{-\frac{\epsilon}{2}}]$, we have that, for $i_{1},i_{2}\geq j+1$
$$\w^{i_{1}}(x,t)=\w^{i_{2}}(x,t),\ F^{i_{1}}(x,t)=F^{i_{2}}(x,t)$$
and $\w^{i}(x,t)\in C^{2.5}$ for $t\in[0,1]$.

For the same reason, we have that for $t\in[0,1)$, $\w^{\infty}(x,t)$ is a solution to the forced incompressible 3D-Euler equation in vorticity formulation, since $\w^{i}(x,t)$ is a solution to the forced incompressible 3D-Euler equation in vorticity formulation for $t\in[0,1]$.
To obtain bounds for the force,  for any fixed $t_{0}\in[0,1)$ we have
$$||F^{\infty}(x,t_{0})||_{C^{\frac{1}{2}-6\epsilon}}\leq ||F_{0}(x,t)||_{C^{\frac{1}{2}-6\epsilon}}+\sum_{j=0}^{\infty}(M_{j})^{-\epsilon}\leq C$$
where we used that for $j$ big $M_{j}^{-\epsilon}<< j^{-2}$.

Finally, we only need to check that the solution blows up, but we know that, for $t_{i}=1-M_{i-1}^{-\frac{\epsilon}{2}}$

$$|u(\w^{\infty}(x,t_{i}))|_{C^1}=|u(\w^{i}(x,t_{i}))|_{C^1}\geq \frac{C_{0}}{4}M_{i-1}^{\epsilon},$$
$$||\w^{\infty}(x,t_{i})||_{C^{2.5}}=||\w^{i}(x,t_{i})||_{C^{2.5}}\leq M_{i-1}^{4}$$
and using that
$$ |u(\w^{\infty}(x,t_{i}))|_{C^1}\leq C||\w^{\infty}(x,t_{i})||_{L^{\infty}}\ln(10+||\w^{\infty}(x,t_{i})||_{C^{2.5}})$$
gives us
$$||\w^{\infty}(x,t_{i})||_{L^{\infty}}\geq \frac{CM_{i-1}^{\epsilon}}{\ln(10+M_{i-1}^{4})}$$
and since $M_{i-1}$ tends to infinity as $i$ tends to infinity this finishes the proof.
    
\end{proof}

\section*{Acknowledgements}
This work is supported in part by the Spanish Ministry of Science
and Innovation, through the “Severo Ochoa Programme for Centres of Excellence in R$\&$D (CEX2019-000904-S)” and 114703GB-100. We were also partially supported by the ERC Advanced Grant 788250. 

\bibliographystyle{alpha}

\end{document}